\documentclass[11pt,letterpaper]{article}
\usepackage{booktabs}
\usepackage[letterpaper,margin=0.98in]{geometry}

\usepackage{bm}
\usepackage{authblk}
\usepackage{mlmodern}
\usepackage{inconsolata}

\usepackage{mdframed}

\usepackage{dsfont}

\usepackage{amsfonts,nicefrac}
\usepackage[utf8]{inputenc}
\usepackage[dvipsnames]{xcolor}
\usepackage{graphicx}
\usepackage{multirow}
\usepackage{amsmath}
\usepackage{amsthm}
\usepackage{amssymb}
\usepackage{comment}
\usepackage{cancel}

\usepackage[shortlabels]{enumitem}
\usepackage[style=alphabetic, maxbibnames=15, giveninits=true, maxcitenames=15, natbib=true,
  maxalphanames=10, backend=biber, sorting=nty, backref=true, uniquename=false]{biblatex}
\DefineBibliographyStrings{english}{backrefpage = {page},backrefpages = {pages},}
\DeclareNameAlias{sortname}{given-family}
\renewbibmacro{in:}{\ifentrytype{article}{}{\printtext{\bibstring{in}\intitlepunct}}}
  
\makeatletter
\newtheorem{theorem}{Theorem}[section]
\newtheorem{corollary}{Corollary}[theorem]
\newtheorem{lemma}[theorem]{Lemma}
\newtheorem{proposition}[theorem]{Proposition}

\newtheorem{assumption}{Assumption}[section]
\theoremstyle{remark}
\newtheorem{remark}{Remark}[section]

\usepackage{hyperref}
\hypersetup{
  colorlinks   = true, urlcolor     = blue, linkcolor    = blue, citecolor   = red }
\makeatother

\renewcommand{\epsilon}{\varepsilon}

\usepackage{eqparbox}
\usepackage{algorithm,algorithmic}

\usepackage[capitalise]{cleveref}
\crefname{equation}{}{}

\floatstyle{ruled}
\newfloat{subroutine}{htbp}{lok}
\floatname{subroutine}{Subroutine}
\crefname{subroutine}{subroutine}{subroutines}
\Crefname{subroutine}{Subroutine}{Subroutines}

\title{Anchored Extra-Proximal Methods: Optimal Higher-Order Methods for Monotone Inclusion Problems \vspace{3mm}

\footnotetext{The authors are listed in alphabetical order.
}}
\usepackage{times}

\renewcommand*{\Affilfont}{\normalsize}

\renewcommand*{\Affilfont}{\normalsize}

\makeatletter
\renewcommand\AB@affilsepx{\quad \protect\Affilfont}
\renewcommand\AB@affilsep{\protect\Affilfont}
\makeatother

\author[$\dagger$]{Ruichen Jiang}
\author[$\ddagger$]{TaeHo Yoon}

\affil[$\dagger$]{Google Research}
\affil[$\ddagger$]{Johns Hopkins University}

\date{}

\usepackage{amsmath}
\usepackage{amssymb}
\usepackage{mathtools}
\usepackage{subcaption}
\usepackage{makecell,threeparttable}

\newcommand\Vector[1]{\bm{#1}}

\newcommand\vc{{\Vector{c}}}
\newcommand\vd{{\Vector{d}}}

\newcommand\vg{{\Vector{g}}}

\newcommand\vq{{\Vector{q}}}
\newcommand\vr{{\Vector{r}}}

\newcommand\vv{{\Vector{v}}}
\newcommand\vw{{\Vector{w}}}
\newcommand\vx{{\Vector{x}}}
\newcommand\vy{{\Vector{y}}}
\newcommand\vz{{\Vector{z}}}

\newcommand\MATRIX[1]{\mathbf{#1}}

\newcommand\mI{{\MATRIX{I}}}

\newcommand\bigO{\mathcal{O}}

\newcommand\op{{\mathrm{op}}}
\newcommand{\reals}{\mathbb{R}}

\DeclareMathOperator{\dom}{dom\,}

\newcommand{\mypara}[1]{\noindent\textbf{#1.}\ }

\newcommand{\norm}[1]{\left\|#1\right\|}
\newcommand{\inprod}[2]{\left\langle#1,#2\right\rangle}

\DeclareMathOperator{\Lip}{Lip}

\makeatletter
\def\define@double@struck#1{%
  \expandafter\newcommand\csname o#1\endcsname{\mathds{#1}}%
}
\@tfor\double@struck@letter:=ABCDEFGHIJKLMNOPQRSTUVWXYZ\do{%
  \expandafter\define@double@struck\double@struck@letter
}
\makeatother
\newcommand{\res}{\mathrm{res}}
\newcommand{\gra}{\mathrm{gra}\,}

\begin{document}

\maketitle

\begin{abstract}
We study the deterministic oracle complexity of finding approximate solutions
to composite monotone inclusion problems, formed by the sum of a smooth
single-valued monotone operator and a maximally monotone set-valued operator,
under the tangent-residual criterion. We introduce
the Anchored Extra-Proximal (AEP) framework, which combines an anchored extrapolation step with an inexact anchored proximal update satisfying a relative-error
condition. The framework recovers the composite Fast Extragradient method in
the first-order setting and yields natural second- and higher-order extensions
by replacing the operator in the implicit update with its Taylor approximation
at the extrapolated point.
For every $p\geq 2$, assuming that the $(p-1)$th derivative of the single-valued
operator is Lipschitz continuous, we combine this construction with a
bisection line search to obtain a $p$th-order method that finds a point
with tangent residual at most $\varepsilon$ in
$\widetilde{\mathcal O}(\varepsilon^{-2/(3p-1)})$ oracle calls. This improves
all prior upper bounds for $p$th-order methods: in particular, it improves the
previous best-known $\widetilde{\mathcal O}(\varepsilon^{-1/p})$
tangent-residual complexity as well as the classical
$\mathcal O(\varepsilon^{-2/(p+1)})$ bound of higher-order hybrid proximal extragradient methods under the
weaker duality-gap criterion. We complement this result with a worst-case lower
bound of
$\Omega(\varepsilon^{-2/(3p-1)})$ for every deterministic algorithm in the
$p$th-order oracle model, without restricting the algorithm to tensor steps or
any other prescribed update structure. Thus, the proposed method attains the
optimal dependence on $\varepsilon$, up to logarithmic factors, for all
$p\geq2$.
\end{abstract}

\newpage

\section{Introduction}

A broad class of optimization problems, including convex-concave min-max %
optimization, monotone variational inequalities, and nonexpansive fixed-point problems, can be formulated as finding a zero of the sum of two maximally monotone operators~\cite{Facchinei2003}. Specifically, {we consider the problem}
\begin{equation}\label{eq:monotone}
    {\text{find }\vz \in \reals^d \text{ such that} \,\, 0 \in \oF(\vz) + \oH(\vz),}  
\end{equation}
where $\oF\colon \reals^d \rightarrow \reals^d$ is a {continuous} %
monotone operator and $\oH\colon \reals^d \rightrightarrows \reals^d$ is a set-valued maximally monotone operator. Throughout the paper, we assume that Problem~\eqref{eq:monotone}
admits a solution and fix an arbitrary solution $\vz^*$.  In this paper, we are interested in finding approximate solutions to Problem~\eqref{eq:monotone} with a small \emph{tangent residual}~\cite{cai2022finite,cai2023accelerated}, which measures the distance from the origin to the set $\oF(\vz) + \oH(\vz)$. It reduces to the familiar gradient norm in the setting of unconstrained minimization or min-max optimization where $\oH = 0$, and serves as an upper bound on several commonly used optimality measures, including the duality gap, as we review in Section~\ref{subsec:optimality}.

Various first-order iterative methods, which rely only on evaluations of $\oF$ and the resolvent of $\oH$, have been proposed for solving the monotone inclusion problem~\eqref{eq:monotone} and its special cases of min-max optimization and variational inequalities. Prominent examples include the Extragradient 
method~\cite{Korpelevich1976,Nemirovski2004} and Popov's method~\cite{Popov1980}, which was later popularized as the Optimistic Gradient method~\cite{Rakhlin2013optimization,Daskalakis2018}. Although these classical methods achieve the optimal complexity of $\bigO(\varepsilon^{-1})$ for the weaker \emph{duality gap} criterion, their complexity with respect to the tangent residual is only $\bigO(\varepsilon^{-2})$~\cite{golowich2020last,golowich2020tight,gorbunov2022last,gorbunov2022extragradient,cai2022finite}, which is suboptimal. The optimal oracle complexity for reducing the tangent residual was established relatively recently. For unconstrained convex-concave min-max optimization (a special case of Problem~\eqref{eq:monotone}), \citet{yoon2021accelerated} proposed the Extra Anchored Gradient (EAG) method, which combines extragradient steps with anchoring and achieves a complexity of $\bigO(\varepsilon^{-1})$ in terms of the gradient/operator norm. They also established a matching lower bound of $\Omega(\varepsilon^{-1})$ for deterministic first-order algorithms, thereby proving that EAG is optimal up to a constant factor. Subsequently, \citet{lee2021fast} introduced the Fast Extragradient (FEG) method, improving the constant in the complexity bound and extending the same rate to structured non-monotone settings, including negatively comonotone min-max problems. \citet{cai2024accelerated} further extended EAG and FEG to the {composite} monotone inclusion problem of the form \eqref{eq:monotone} and to comonotone inclusion problems. 

Although the first-order oracle complexity is understood up to constant factors, the optimal oracle complexity of second- and higher-order methods remains open. 
Second- and higher-order counterparts of the extragradient and optimistic gradient methods have been proposed in the literature. For any $p\geq 2$, assuming the $(p-1)$th derivative of $\oF$ is Lipschitz continuous, these $p$th-order methods 
attain a complexity of $\bigO(\varepsilon^{-2/(p+1)})$ for an averaged iterate under the {dual gap} criterion~\cite{monteiro2010complexity,monteiro2012iteration,bullins2022higher,adil2022optimal,lin2025perseus,jiang2024generalized} and a complexity of $\bigO(\varepsilon^{-2/p})$ for the best iterate under the tangent-residual criterion~\cite{monteiro2010complexity,monteiro2012iteration,lin2025perseus}. Most recently, for convex-concave min-max optimization, Minimax-AIPE and its $p$th-order extension~\citep{chen25solving,chen2026solving} use multi-loop accelerated proximal schemes with high-order regularized subproblem solvers, achieving complexities of $\tilde{\bigO}(\varepsilon^{-4/7})$ for the duality gap and $\tilde{\bigO}(\varepsilon^{-4/(3p+1)})$ for the tangent residual, respectively. For smooth monotone variational inequalities, \citet{chen2026halpern} proposed a two-loop method consisting of an outer inexact Halpern iteration and an inner resolvent solver based on the restarted Newton proximal extragradient (NPE) method for $p=2$ or an anchored tensor method for general $p$, attaining $\tilde{\bigO}(\varepsilon^{-1/p})$ complexity for the proximal residual, {which was the best known upper bound prior to the present work.  However, this rate still left a polynomial gap to the lower bound $\Omega(\varepsilon^{-2/(3p-1)})$ established in \cite{chen2026solving} for a restricted class of $p$th-order tensor algorithms.}

This gap raises the following fundamental question:  
\begin{quote}
  \textit{What is the optimal oracle complexity of solving the monotone inclusion problem in~\eqref{eq:monotone} using second-order and higher-order information?} 
\end{quote}

\paragraph{Contribution}
We settle the optimal deterministic $p$th-order oracle complexity for the monotone inclusion problems in \eqref{eq:monotone}, up to logarithmic factors. Our main results are as follows. 
\begin{itemize}
  \item \textbf{Upper bound.} We introduce a general \emph{Anchored Extra-Proximal} (AEP) framework and derive from it a corresponding $p$th-order method for every $p\geq 2$. Assuming that the $(p-1)$th derivative of $\oF$ is Lipschitz continuous, the resulting method returns a point $\vz$ with tangent residual at most $\varepsilon$ using ${\bigO}\bigl(\varepsilon^{-2/(3p-1)} \log \epsilon^{-1}\bigr)$ $p$th-order oracle calls.
  \item \textbf{Matching lower bound.} We prove that any deterministic algorithm in the $p$th-order oracle model requires $\Omega\bigl(\varepsilon^{-2/(3p-1)}\bigr)$ oracle calls in the worst case to find a point with tangent residual at most $\varepsilon$.
\end{itemize}
Together, these results identify the optimal dependence on $\varepsilon$ for every $p\geq2$; the upper and lower bounds differ only by logarithmic factors.

Beyond the complexity bound itself, our framework provides a direct and unified algorithmic interpretation. In contrast to the nested-loop constructions in~\cite{chen25solving,chen2026solving,chen2026halpern}, AEP can be viewed as a natural higher-order generalization of the FEG method~\cite{lee2021fast,cai2024accelerated}. Motivated by the proximal-point perspective of~\cite{monteiro2010complexity,monteiro2012iteration,Mokhtari2020a,Mokhtari2020,yoon2025accelerated,jiang2025generalized}, we begin with an ideal anchored proximal point method {with time-varying step sizes} and show that anchoring yields a sharper convergence guarantee in terms of the tangent residual than the classical proximal point method~\cite{Martinet1970,Rockafellar1976} 
We then introduce the AEP framework, which couples an anchored extrapolation step with an inexact anchored proximal update satisfying a hybrid proximal extragradient (HPE)-type relative-error condition.

This framework recovers first-order anchor acceleration and naturally extends it to higher orders. In particular, 
{the FEG method with optimal $\bigO(\varepsilon^{-1})$ complexity is a first-order, unconstrained special case of AEP, where the inexact proximal update direction is taken as $\oF$ evaluated at the extrapolated point.
When we have access to higher-order derivatives of $\oF$, we instead use the $(p-1)$th-order Taylor model centered at the extrapolated point as the inexact proximal update, which is the $p$th-order AEP method.}
The main technical challenge of AEP is to select a step size {small enough so that} %
the Taylor-model update satisfies the required relative-error condition and simultaneously {large enough to retain sufficient} progress needed for acceleration. We resolve this through a bisection-based line search that terminates after $\bigO(\log(1/\varepsilon))$ trials per iteration.

To establish the matching lower bound, we adapt the hard fixed-point construction of~\citet{jang2026higher} to the tangent-residual criterion for monotone inclusions and develop a corresponding resisting-oracle argument. 
{Notably, our lower bound applies to unconstrained problems, and to every deterministic algorithm under the $p$th-order oracle model; 
it is more general than the lower bound from \cite{chen2026solving}, which constructs a constrained min-max problem and proves the lower bound over the algorithm class using specific types of tensor steps.}

\paragraph{Concurrent work.}
While this manuscript was being completed, \citet{zhang2026matching} independently established, up to logarithmic factors, the same optimal $p$th-order oracle complexity of $\tilde{\bigO}(\varepsilon^{-2/(3p-1)})$ for smooth monotone variational inequalities, together with a matching lower bound. Their algorithmic approach is substantially different from ours: whereas their method employs a multi-hierarchy acceleration scheme built around inexact extrapolated Halpern iteration, our method is obtained by directly instantiating the AEP framework with higher-order Taylor models and can be viewed as a natural higher-order generalization of FEG.
Notably, while their poly-logarithmic gap between the upper and lower bounds is $\mathcal O\left( \log^{6(p-1)} \left( \frac{L_p \norm{\vz_0 - \vz^*}^p}{\varepsilon} \right)\right)$, we have a significantly reduced $\mathcal O\left( \log \left(\frac{L_p \norm{\vz_0 - \vz^*}^p}{\varepsilon} \right)\right)$.
On the lower-bound side, 
we make use of the exact fixed-point operator construction by \citet{jang2026higher}, while theirs use a qualitatively similar but ultimately distinct operator, derived separately.
Thus, the two works arrive at the same optimal dependence on $\varepsilon$ through distinct algorithmic/lower-bound constructions.

\section{Preliminaries and Related Work}
In this section, we review the special cases of Problem~\eqref{eq:monotone}, including min-max optimization, variational inequalities and fixed-point problems. Then in Section~\ref{subsec:optimality}, we discuss various optimality measures used in the literature and their relationship with tangent residual, which is the measure of interest in this paper. In particular, we show that the tangent residual upper bounds these measures, demonstrating that our faster convergence rate does not result from weakening the optimality criterion.
\subsection{Min-max optimization, variational inequalities, and fixed-point problems}\label{subsec:special_cases}

\mypara{Min-max optimization} Consider the composite min-max problem
\begin{equation}\label{eq:min-max}
  \min_{\vx \in \reals^m}\,\max_{\vy \in \reals^n}\;f(\vx, \vy) + h_1(\vx) - h_2(\vy),
\end{equation}
where $f\colon \reals^m \times \reals^n \to \reals$ is smooth, and $h_1$ and $h_2$ are proper, closed, convex functions. The functions $h_1$ and $h_2$ can, for example, encode convex constraints through indicator functions or nonsmooth regularizers such as the $\ell_1$-norm. Letting $\ell(\vx,\vy): = f(\vx, \vy) + h_1(\vx) - h_2(\vy)$, a pair $(\vx^*,\vy^*) \in \dom h_1 \times \dom h_2$ is called a saddle point of Problem~\eqref{eq:min-max} if
\begin{equation}\label{eq:saddle_point}
\ell(\vx^*,\vy) 
  \leq \ell(\vx^*,\vy^*) \leq \ell(\vx,\vy^*)
\end{equation}
for all $(\vx,\vy) \in \dom h_1 \times \dom h_2$. When $f$ is convex in $\vx$ and concave in $\vy$, Problem~\eqref{eq:min-max} is an instance of the monotone inclusion problem in~\eqref{eq:monotone}. Indeed, let $\vz = (\vx,\vy)$ and define
\begin{equation*}
  \oF(\vz) = \bigl(\nabla_{\vx} f(\vx,\vy),-\nabla_{\vy} f(\vx,\vy)\bigr),
  \qquad
  \oH(\vz) = \partial h_1(\vx) \times \partial h_2(\vy),
\end{equation*}
where $\partial h$ denotes the subdifferential of a convex function $h$. Then $(\vx^*,\vy^*)$ is a saddle point of Problem~\eqref{eq:min-max} if and only if $\vz^*=(\vx^*,\vy^*)$ satisfies $0 \in \oF(\vz^*)+\oH(\vz^*)$.

\mypara{Variational inequalities} A composite (Stampacchia) variational inequality aims to find a point $\vz^* \in \dom h$ such that
\begin{equation}\label{eq:VI}
  \langle \oF(\vz^*), \vz^* - \vz\rangle + h(\vz^*) - h(\vz)  \leq 0 \quad \forall \vz \in \dom h,
\end{equation}
where $\oF \colon \reals^d \to \reals^d$ is a single-valued operator and $h$ is a proper, closed, convex function. Composite variational inequalities encompass convex-concave min-max optimization and arise more broadly in equilibrium problems~\cite{Facchinei2003}. Under our standing smoothness assumption, if $\oF$ is monotone, then it is also maximally monotone. Moreover, Problem~\eqref{eq:VI} is equivalent to the monotone inclusion $0 \in \oF(\vz^*) + \partial h(\vz^*)$, and hence is an instance of Problem~\eqref{eq:monotone} with $\oH = \partial h$.

\mypara{Fixed-point problems} Given a single-valued operator $\oT\colon \reals^d \to \reals^d$, a point $\vz$ is a fixed point of $\oT$ if $\oT(\vz) = \vz$. Suppose that $\oT$ is nonexpansive, meaning that $\|\oT(\vz)-\oT(\vz')\| \leq \|\vz-\vz'\| $ for all $\vz,\vz' \in \reals^d$. 
Then $\oF := \oI - \oT$ is continuous and monotone, and hence maximally monotone. Moreover, $\vz$ is a fixed point of $\oT$ if and only if $0 = \oF(\vz)$. Thus, the fixed-point problem is a special case of Problem~\eqref{eq:monotone} with $\oH \equiv 0$.

\subsection{Optimality measures}\label{subsec:optimality}

In this paper, we measure approximate optimality using the \emph{tangent residual}~\cite{cai2022finite,cai2023accelerated}, defined by
\begin{equation}\label{eq:tangent_residual}
  \res^{\mathrm{tan}}(\vz)
  := \inf_{\vv \in \oH(\vz)} \|\oF(\vz)+\vv\|
  = \mathrm{dist}\bigl(0,\oF(\vz)+\oH(\vz)\bigr).
\end{equation}
We next review several other optimality measures for Problem~\eqref{eq:monotone} and its special cases.

\mypara{Gap functions} For the min-max problem~\eqref{eq:min-max}, recall that $\ell(\vx,\vy):=f(\vx,\vy)+h_1(\vx)-h_2(\vy)$. When $\dom h_1$ and $\dom h_2$ are compact, the duality gap~\cite{Nemirovski2004} at $(\hat{\vx},\hat{\vy}) \in \dom h_1 \times \dom h_2$ is
\begin{equation*}
  \mathrm{gap}(\hat{\vx},\hat{\vy})
  := \max_{\vy \in \dom h_2} \ell(\hat{\vx},\vy)
  - \min_{\vx \in \dom h_1} \ell(\vx,\hat{\vy}).
\end{equation*}
{This gap is always nonnegative and vanishes if and only if $(\hat{\vx},\hat{\vy})$ is a saddle point satisfying~\eqref{eq:saddle_point}.}

For the composite variational inequality~\eqref{eq:VI} with compact $\dom h$, two related gap functions are commonly used~\cite{Facchinei2003}. The strong gap, also called the primal or Stampacchia gap, is
\begin{equation*}
  \mathrm{gap}_{\mathrm{s}}(\hat{\vz})
  := \sup_{\vz \in \dom h}
  \bigl\{\langle \oF(\hat{\vz}),\hat{\vz}-\vz\rangle
  +h(\hat{\vz})-h(\vz)\bigr\},
\end{equation*}
whereas the weak gap, also called the dual or Minty gap, is
\begin{equation*}
  \mathrm{gap}_{\mathrm{w}}(\hat{\vz})
  := \sup_{\vz \in \dom h}
  \bigl\{\langle \oF(\vz),\hat{\vz}-\vz\rangle
  +h(\hat{\vz})-h(\vz)\bigr\}.
\end{equation*}
If $\oF$ is monotone, then $\mathrm{gap}_{\mathrm{w}}(\hat{\vz}) \leq \mathrm{gap}_{\mathrm{s}}(\hat{\vz})$ for every $\hat{\vz} \in \dom h$. Moreover, $\mathrm{gap}_{\mathrm{s}}(\hat{\vz})=0$ if and only if $\hat{\vz}$ solves~\eqref{eq:VI}; when $\oF$ is also continuous, the same equivalence holds for $\mathrm{gap}_{\mathrm{w}}$.

\mypara{$\varepsilon$-enlargement-based residual} For a maximally monotone operator {$\oH$} and $\varepsilon\geq 0$, its $\varepsilon$-enlargement~\cite{burachik1997enlargement} is
\begin{equation*}
  {\oH}^\varepsilon(\vz)
  := \left\{
  \vv :
  \langle \vv-\vv',\vz-\vz'\rangle \geq -\varepsilon
  \quad \forall (\vz',\vv') \in \gra {\oH}
  \right\}.
\end{equation*}
Given $\rho,\varepsilon\geq 0$, a point $\vz$ is a $(\rho,\varepsilon)$-weak solution of~\eqref{eq:monotone} if there exists $\vv \in (\oF+\oH)^\varepsilon(\vz)$ such that $\|\vv\|\leq\rho$. It is a $(\rho,\varepsilon)$-strong solution if there exists $\vv \in \oF(\vz)+\oH^\varepsilon(\vz)$ such that $\|\vv\|\leq\rho$. When $\oH=N_{\mathcal{X}}$ is the normal-cone operator of a closed convex set $\mathcal{X}$, these notions recover the weak and strong approximate solutions of Monteiro--Svaiter~\cite{monteiro2010complexity}.

\mypara{Resolvent-based residuals} For any $\alpha>0$, the \emph{forward--backward residual}~\cite{diakonikolas2020halpern,yoon2025accelerated} is
\begin{equation*}
  \res^{\mathrm{FB}}_{\alpha}(\vz)
  := \frac{1}{\alpha}
  \bigl\|\vz-\oJ_{\alpha\oH}\bigl(\vz-\alpha\oF(\vz)\bigr)\bigr\|,
\end{equation*}
where $\oJ_{\alpha\oH}:=(\oI+\alpha\oH)^{-1}$ denotes the resolvent of $\alpha\oH$. When $\oH=N_{\mathcal X}$, this quantity is commonly  
called the \emph{scaled natural residual}~\cite{Facchinei2003}. Similarly, the \emph{proximal residual}~\cite{chen2026halpern} is defined as 
\begin{equation*}
  \res^{\mathrm{prox}}_{\alpha}(\vz)
  := \frac{1}{\alpha}
  \bigl\|\vz-\oJ_{\alpha(\oF+\oH)}(\vz)\bigr\|.
\end{equation*}

Following similar observations in~\citet{cai2023accelerated}, the next lemma formalizes how a bound on the tangent residual controls these alternative optimality measures.
\begin{lemma}\label{lem:approx_solutions}
    \begin{enumerate}[(i)]
        \item For the convex-concave min-max problem~\eqref{eq:min-max}, let $\hat{\vz}:=(\hat{\vx},\hat{\vy})$ and define
        \begin{equation*}
          D
          := \max_{(\vx,\vy) \in \dom h_1 \times \dom h_2}
          \|\hat{\vz}-(\vx,\vy)\|.
        \end{equation*}
        Then $\mathrm{gap}(\hat{\vx},\hat{\vy}) \leq D\,\res^{\mathrm{tan}}(\hat{\vz})$. Likewise, for the composite variational inequality~\eqref{eq:VI} and any $\hat{\vz}\in\dom h$, define
        \begin{equation*}
          D_{\mathrm{VI}}
          := \max_{\vz \in \dom h}\|\hat{\vz}-\vz\|.
        \end{equation*}
        Then $\mathrm{gap}_{\mathrm{w}}(\hat{\vz}) \leq \mathrm{gap}_{\mathrm{s}}(\hat{\vz}) \leq D_{\mathrm{VI}}\,\res^{\mathrm{tan}}(\hat{\vz})$.
        \item If $\res^{\mathrm{tan}}(\vz) \leq \varepsilon$, then $\vz$ is a $(\varepsilon,0)$-weak solution and $(\varepsilon,0)$-strong solution in the sense of \cite{monteiro2010complexity}.
        \item For any $\alpha > 0$, it holds that $\res^{\mathrm{FB}}_{\alpha}(\vz) \leq  \res^{\mathrm{tan}}(\vz)$ and $\res^{\mathrm{prox}}_{\alpha}(\vz) \leq \res^{\mathrm{tan}}(\vz)$.
    \end{enumerate}
     
\end{lemma}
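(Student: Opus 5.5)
The plan is to fix, throughout, a vector $\vv \in \oH(\hat{\vz})$ with $\|\oF(\hat{\vz})+\vv\|$ arbitrarily close to $\res^{\mathrm{tan}}(\hat{\vz})$. If $\oH(\hat{\vz})$ is empty the residual is $+\infty$ and every claim is trivial. Otherwise $\oH(\hat{\vz})$ is closed and convex, so the infimum is attained and we may take it exact. Write $\vw := \oF(\hat{\vz})+\vv$. Each part then becomes a one-line inequality built from monotonicity of $\oF$, the subgradient inequality for $\oH=\partial h$ (or $\partial h_1\times\partial h_2$), and Cauchy--Schwarz.

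\textbf{Part (i).} For the VI, take $\vz\in\dom h$. Since $\vv\in\partial h(\hat{\vz})$, we have $h(\hat{\vz})-h(\vz)\le \langle \vv,\hat{\vz}-\vz\rangle$. Hence
\[
\langle \oF(\hat{\vz}),\hat{\vz}-\vz\rangle+h(\hat{\vz})-h(\vz)\le\langle \vw,\hat{\vz}-\vz\rangle\le D_{\mathrm{VI}}\|\vw\|.
\]
Taking the supremum gives the bound on $\mathrm{gap}_{\mathrm s}$. The inequality $\mathrm{gap}_{\mathrm w}\le\mathrm{gap}_{\mathrm s}$ is termwise monotonicity of $\oF$: $\langle \oF(\vz),\hat{\vz}-\vz\rangle\le\langle\oF(\hat{\vz}),\hat{\vz}-\vz\rangle$.

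For the min-max problem, the argument uses convexity of $\ell(\cdot,\hat{\vy})$ and of $-\ell(\hat{\vx},\cdot)$. Take $\vv=(\vv_1,\vv_2)$ with $\vv_1\in\partial h_1(\hat{\vx})$ and $\vv_2\in\partial h_2(\hat{\vy})$. Then
\[
\ell(\hat{\vx},\hat{\vy})-\ell(\vx,\hat{\vy})\le\langle\nabla_{\vx}f(\hat{\vx},\hat{\vy})+\vv_1,\hat{\vx}-\vx\rangle,
\]
and similarly
\[
\ell(\hat{\vx},\vy)-\ell(\hat{\vx},\hat{\vy})\le\langle-\nabla_{\vy}f(\hat{\vx},\hat{\vy})+\vv_2,\hat{\vy}-\vy\rangle.
\]
Adding the two inequalities bounds the gap summand by $\langle\vw,\hat{\vz}-(\vx,\vy)\rangle\le D\|\vw\|$. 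Maximizing over $(\vx,\vy)$ finishes the part.

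\textbf{Part (ii).} The strong solution claim is immediate, since $\oH^0=\oH$ by maximal monotonicity, so $\vw\in\oF(\hat{\vz})+\oH^0(\hat{\vz})$. For the weak solution, $\oF+\oH$ is monotone, so $\vw\in(\oF+\oH)(\hat{\vz})\subseteq(\oF+\oH)^0(\hat{\vz})$. (The reverse inclusion needs maximality of $\oF+\oH$, but only this direction is required.)

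\textbf{Part (iii).} This is the step needing care. For the FB residual, set $\vu:=\oJ_{\alpha\oH}(\hat{\vz}-\alpha\oF(\hat{\vz}))$. Then $(\hat{\vz}-\vu)/\alpha-\oF(\hat{\vz})\in\oH(\vu)$, and also $\vv\in\oH(\hat{\vz})$. Monotonicity of $\oH$ gives
\[
\langle (\hat{\vz}-\vu)/\alpha-\oF(\hat{\vz})-\vv,\ \vu-\hat{\vz}\rangle\ge0,
\]
which rearranges to
\[
\|\hat{\vz}-\vu\|^2/\alpha\le\langle\vw,\hat{\vz}-\vu\rangle,
\]
and Cauchy--Schwarz yields $\|\hat{\vz}-\vu\|/\alpha\le\|\vw\|$. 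The proximal residual follows identically. Set $\vu=\oJ_{\alpha(\oF+\oH)}(\hat{\vz})$, so $(\hat{\vz}-\vu)/\alpha\in(\oF+\oH)(\vu)$. Combined with $\vw\in(\oF+\oH)(\hat{\vz})$, monotonicity of $\oF+\oH$ gives
\[
\|\hat{\vz}-\vu\|^2/\alpha\le\langle \vw,\hat{\vz}-\vu\rangle.
\]

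The only subtlety is that the resolvent is single-valued with full domain. This holds because $\oF+\oH$ is maximally monotone when $\oF$ is continuous monotone with full domain; I would cite this standard fact (Minty plus the sum rule).
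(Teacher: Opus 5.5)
Your proof is correct. The paper states this lemma without writing out a proof (it only points to the analogous observations of Cai et al.), and your argument is the standard route those observations rest on: the subgradient inequality for $\partial h$ (resp.\ $\partial h_1\times\partial h_2$), together with convexity--concavity of $f$ and Cauchy--Schwarz, for the gaps; the inclusions $\oH(\hat{\vz})\subseteq\oH^0(\hat{\vz})$ and $(\oF+\oH)(\hat{\vz})\subseteq(\oF+\oH)^0(\hat{\vz})$ for (ii); and monotonicity of $\oH$ (resp.\ $\oF+\oH$) applied at $\hat{\vz}$ and at the resolvent output for (iii). The one detail that genuinely matters is your observation that the infimum in $\res^{\mathrm{tan}}$ is attained because $\oH(\hat{\vz})$ is closed, since (ii) requires a certificate whose norm is at most exactly $\varepsilon$.
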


\subsection{Additional Related Work}

\mypara{First-order acceleration: anchoring and beyond}
While the idea of retracting iterates to the initial point dates back to the work of \citet{halpernFixedPointsNonexpanding1967}, it recently regained popularity as an acceleration mechanism for fixed-point problems \citep{SabachShtern2017_first,liederConvergenceRateHalperniteration2021,contrerasOptimalErrorBounds2023}.
Halpern's mechanism, also called anchoring, was then applied to min-max optimization and monotone inclusion \citep{RyuYuanYin2019_ode,diakonikolas2020halpern}, and notably, \citet{diakonikolas2020halpern} first achieved near-optimal $\tilde\bigO (\epsilon^{-1})$ complexity with respect to the proximal residual. 
\citet{yoon2021accelerated} then combined anchoring with extragradient to establish the optimal $\bigO(\epsilon^{-1})$ complexity in the unconstrained setting, which was subsequently extended to weakly nonmonotone operators \citep{lee2021fast,cai2024accelerated}, single-call algorithms \citep{Tran-DinhLuo2021_halperntype,cai2023accelerated}, continuous-time analysis \citep{SuhParkRyu2023_continuoustime} or algorithms using generalized anchor points \citep{alcalaMovingAnchorExtragradient2023,botExtragradientMethodFlexible2026}.
This acceleration phenomenon was further developed and interpreted as vanishing dampening mechanism similar to Nesterov momentum \citep{tran-dinhHalpernsFixedpointIterations2024,botFastOptimisticGradient2025,sedlmayerFastOptimisticMethod2023,tran-dinhExtragradienttypeMethods2024}.
There also exists a family of distinct acceleration mechanisms for monotone inclusion and fixed-point problems that operate under a predetermined iteration budget \citep{yoonOptimalAccelerationMinimax2024,yoonHinvarianceTheoryComplete2026,yoon2026theory}.
In this work, we primarily focus on the anchor acceleration mechanism and its higher-order extensions.

\mypara{Second-order and higher-order methods}
To our knowledge, the first global iteration-complexity result for a second-order method for monotone variational inequalities was obtained by~\citet{nesterov2006cubicVI}. They proposed a dual Newton method based on cubic regularization~\cite{nesterov2006cubic} and established a complexity of $\bigO(\varepsilon^{-1})$ in terms of the restricted weak gap.
The first second-order complexity bounds improving over first-order methods were later obtained by~\citet{monteiro2010complexity}, who analyzed a variant of the Newton HPE method of~\citet{solodov1999hybrid} for smooth monotone equations. Using $\epsilon$-enlargement-based residual, their method finds an $(\varepsilon,\varepsilon)$-weak solution at an averaged iterate in $\bigO(\varepsilon^{-2/3})$ iterations and an $(\varepsilon,0)$-strong solution at the best iterate in $\bigO(\varepsilon^{-1})$ iterations. These results were subsequently extended to composite monotone inclusions of the form~\eqref{eq:monotone} in~\cite{monteiro2012iteration}. Motivated in part by the nontrivial line search required by NPE, subsequent works developed line-search-free second-order methods with the same $\bigO(\varepsilon^{-2/3})$ ergodic iteration complexity, both for monotone variational inequalities~\cite{alves2024search} and for convex-concave min-max problems~\cite{jiang2024adaptive,lin2026explicit}.
For monotone variational inequalities, higher-order variants of Mirror Prox~\cite{bullins2022higher,adil2022optimal} and the higher-order dual-extrapolation method \cite{lin2025perseus} find an averaged $\varepsilon$-weak solution in $\bigO(\varepsilon^{-2/(p+1)})$ iterations. The latter additionally obtains a complexity of $\bigO(\varepsilon^{-2/p})$ for finding an $\varepsilon$-strong solution at the best iterate under the more general Minty condition. For convex--concave min--max problems, a $p$th-order generalized optimistic method is proposed in \cite{jiang2024generalized}, with the same $\bigO(\varepsilon^{-2/(p+1)})$ iteration complexity for the restricted primal--dual gap.

In the setting of smooth convex minimization, \citet{monteiro2013accelerated} introduced the accelerated NPE framework and obtained a second-order complexity of $\tilde{\bigO}(\varepsilon^{-2/7})$. Subsequent works extended this framework to higher-order methods, attaining a complexity of $\tilde{\bigO}(\varepsilon^{-2/(3p+1)})$~\cite{bubeck2019near,jiang2021optimal,gasnikov2019optimal}. The additional logarithmic factor was later removed by~\cite{carmon2022optimal,kovalev2022first}, and matching lower bounds were established by~\cite{arjevani2019oracle}, showing that the resulting dependence on $\varepsilon$ is optimal.

\section{Anchored Extra-Proximal Method}\label{sec:aep}

In this section, we develop a general Anchored Extra-Proximal (AEP) framework that underlies our second- and higher-order methods. We first study an idealized anchored proximal point method in Section~\ref{subsec:anchored-ppm} and establish its convergence in terms of the tangent residual. Because each iteration requires evaluating the resolvent of the full operator $\oF+\oH$, this idealized method is not generally implementable under our oracle model. We therefore introduce the AEP framework in Section~\ref{subsec:aep}, replacing the exact implicit proximal-point update with an HPE-type relative-error condition measured against an anchored extrapolation point. We show that this relaxation preserves the key tangent-residual convergence guarantee while allowing the exact update to be replaced by a suitable approximation. Finally, in Section~\ref{subsec:feg}, we recover FEG as a special case of AEP and recover its known convergence guarantee.

\subsection{Proximal Point Method with Anchoring}\label{subsec:anchored-ppm}
To better motivate our framework, we first study an anchored variant of the proximal point method (PPM), which may also be of independent interest. Starting from an initial point $\vz_0$, at iteration $k\geq 0$, anchored PPM computes
\begin{equation}\label{eq:anchored_PPM}
  \vz_{k+1} = (\oI + a_k (\oF + \oH))^{-1}\left( \frac{a_k}{A_k+a_k} \vz_0 + \frac{A_k}{A_k + a_k} \vz_k\right),
\end{equation}
where $a_k>0$ is the step size and $A_k := \sum_{i=0}^{k-1} a_i$ with $A_0 = 0$ is the cumulative step size. Equivalently, $\vz_{k+1}$ is the unique solution to the monotone inclusion subproblem 
\begin{equation}\label{eq:anchored_PPM_inclusion}
  0 \in  a_k \oF(\vz_{k+1}) + a_k \oH(\vz_{k+1}) + \vz_{k+1}  - \left( \frac{a_k}{A_{k}+a_k} \vz_0 + \frac{A_k}{A_k+a_k}\vz_k \right). 
\end{equation}
Compared with the standard PPM~\cite{Martinet1970,Rockafellar1976}, the key difference is that the resolvent is evaluated at a convex combination of the anchor $\vz_0$ and the current iterate $\vz_k$, with weights determined by the step sizes. The following proposition establishes a last-iterate convergence guarantee in terms of the tangent residual. We defer its proof to Appendix~\ref{appen:anchored_PPM}.

\begin{proposition}\label{prop:anchored_PPM}
  Let $\{\vz_k\}$ be generated by the anchored PPM in \eqref{eq:anchored_PPM}. Define 
  \begin{equation}\label{eq:def_v_+1}
  \vv_{k+1} := \frac{1}{a_k} \left(\frac{a_k}{A_{k}+a_k} \vz_0 + \frac{A_k}{A_k+a_k}\vz_k - \vz_{k+1} - a_k \oF(\vz_{k+1})  \right) \in \oH(\vz_{k+1}).
  \end{equation}
  Then for every $k \geq 0$, we have 
  \begin{equation*}
    \res^{\mathrm{tan}}(\vz_{k+1}) \leq \left\|\oF(\vz_{k+1}) + \vv_{k+1}\right\| \leq \frac{2\|\vz_0-\vz^*\|}{A_{k+1}}.
  \end{equation*}
\end{proposition}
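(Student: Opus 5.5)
The plan is to use a Lyapunov (potential) argument of the type used for Halpern/anchored iterations. Set $\vg_{k} := \oF(\vz_{k}) + \vv_{k}$ for $k\ge 1$. The first step is to check $\vv_{k+1}\in\oH(\vz_{k+1})$. This follows directly from the inclusion \eqref{eq:anchored_PPM_inclusion}, which also gives $\vg_{k+1}\in(\oF+\oH)(\vz_{k+1})$; hence the first inequality $\res^{\mathrm{tan}}(\vz_{k+1})\le\|\vg_{k+1}\|$ is immediate. Multiplying \eqref{eq:def_v_+1} by $a_k$ and using $A_{k+1}=A_k+a_k$, the update can be rewritten in the affine form
\begin{equation*}
  A_{k+1}(\vz_{k+1}-\vz_0) + a_kA_{k+1}\vg_{k+1} = A_k(\vz_k-\vz_0).
\end{equation*}
I would work with the potential
\begin{equation*}
  \Phi_k := \tfrac12 A_k^2\|\vg_k\|^2 + A_k\langle \vg_k, \vz_k-\vz_0\rangle ,
\end{equation*}
and aim to show $\Phi_k\le 0$ for all $k\ge1$.

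\textbf{Base case.} For $k=1$ we have $A_0=0$ and $A_1=a_0$, so $\vz_1-\vz_0=-a_0\vg_1$. Substituting gives $\Phi_1=\tfrac12a_0^2\|\vg_1\|^2-a_0^2\|\vg_1\|^2\le0$.

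\textbf{Monotone decrease.} For $k\ge1$, abbreviate $A=A_k$, $B=A_{k+1}$, $a=a_k$, $\vu=\vz_{k+1}-\vz_0$, $\vw=\vz_k-\vz_0$. The affine relation gives $\vz_{k+1}-\vz_k=-\tfrac aA(\vu+B\vg_{k+1})$. Monotonicity of $\oF+\oH$ at the two points $(\vz_k,\vg_k)$ and $(\vz_{k+1},\vg_{k+1})$ then yields
\begin{equation*}
  \langle \vg_{k+1}-\vg_k,\ \vu+B\vg_{k+1}\rangle\le0 .
\end{equation*}
Next I would expand $\Phi_k-\Phi_{k+1}$, replacing $A\vw$ by $B\vu+aB\vg_{k+1}$. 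I then use the displayed inequality to lower-bound $B\langle\vg_k,\vu\rangle$. The $\langle \vg_{k+1},\vu\rangle$ terms should cancel, and the rest should collapse to a perfect square:
\begin{equation*}
  \Phi_k-\Phi_{k+1}\ \ge\ \tfrac12\|A_k\vg_k-A_{k+1}\vg_{k+1}\|^2\ \ge 0 .
\end{equation*}
This bookkeeping is the only delicate step. The key is choosing the monotonicity multiplier so that the cross terms close into this square, and I expect this step to be the main obstacle.

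\textbf{Conclusion.} From $\Phi_{k+1}\le0$ we get
\begin{equation*}
  \tfrac12A_{k+1}^2\|\vg_{k+1}\|^2\le A_{k+1}\langle\vg_{k+1},\vz_0-\vz^*\rangle + A_{k+1}\langle \vg_{k+1},\vz^*-\vz_{k+1}\rangle .
\end{equation*}
Since $0\in(\oF+\oH)(\vz^*)$, monotonicity makes the last inner product nonpositive. Applying Cauchy--Schwarz to the first term and dividing by $\tfrac12A_{k+1}^2\|\vg_{k+1}\|$ (the case $\vg_{k+1}=0$ being trivial) gives $\|\vg_{k+1}\|\le 2\|\vz_0-\vz^*\|/A_{k+1}$, which is the claimed bound.
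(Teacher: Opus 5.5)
Your proposal is correct and follows essentially the same route as the paper: your $\Phi_k$ is exactly the paper's Lyapunov function $V_k$, the base case and the final step using $0\in(\oF+\oH)(\vz^*)$ match the paper's, and your predicted perfect square $\tfrac12\|A_k g_k - A_{k+1} g_{k+1}\|^2$ is precisely the slack in the Young inequality $A_kA_{k+1}\langle g_{k+1},g_k\rangle \le \tfrac{A_k^2}{2}\|g_k\|^2+\tfrac{A_{k+1}^2}{2}\|g_{k+1}\|^2$ that the paper invokes. The step you flagged as delicate does close: multiplying your monotonicity inequality by $A_{k+1}$ cancels the $\langle g_{k+1},\vz_{k+1}-\vz_0\rangle$ terms, and the cross term gets coefficient $A_{k+1}^2-a_kA_{k+1}=A_kA_{k+1}$, which yields exactly that square.
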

Proposition~\ref{prop:anchored_PPM} proves that the tangent-residual bound depends on the cumulative step size $A_{k+1} = \sum_{i=0}^k a_i$. Because this idealized method imposes no upper bound on the step sizes, its convergence rate can formally be made arbitrarily fast by choosing sufficiently large step sizes. This observation does not, however, directly yield a practical algorithm: each iteration requires solving the monotone inclusion subproblem in~\eqref{eq:anchored_PPM_inclusion}, or equivalently evaluating the resolvent of the full operator $\oF+\oH$. Such an update generally has no closed-form solution and is not directly available under our oracle model. This motivates the relative-error criterion developed in the next subsection, which allows the proximal subproblem to be solved inexactly while preserving the essential convergence guarantee of Proposition~\ref{prop:anchored_PPM}. 

\mypara{Connection with Halpern iteration}
Consider the constant-step-size setting $a_k\equiv a$, for which $A_k=ak$. Define
\[
  \oT:=\oJ_{a(\oF+\oH)}
  \qquad\text{and}\qquad
  \vy_k:=\frac{1}{k+1}\vz_0+\frac{k}{k+1}\vz_k.
\]
Then $\vy_0=\vz_0$ and $\vz_{k+1}=\oT\vy_k$. Consequently, for every $k\geq1$,
\begin{equation*}
  \vy_k
  =
  \frac{1}{k+1}\vy_0
  +
  \frac{k}{k+1}\oT\vy_{k-1}.
\end{equation*}
Thus, the auxiliary sequence $\{\vy_k\}$ is precisely the Halpern iteration applied to the resolvent $\oT$, while $\vz_{k+1}=\oT\vy_k$. By contrast, \citet{park2022exact} showed that the accelerated PPM in~\cite{kim2021accelerated} is equivalent to Halpern iteration applied to the reflected resolvent $2\oJ_{a(\oF+\oH)}-\oI$.

\begin{remark}[Time-varying step sizes]
  An important feature of Proposition~\ref{prop:anchored_PPM} is that it allows an arbitrary sequence of positive, time-varying step sizes $\{a_k\}$. This flexibility will be crucial for obtaining the accelerated convergence rates of our second- and higher-order methods. Indeed, varying $a_k$ changes the resolvent $\oJ_{a_k(\oF+\oH)}$ from one iteration to the next. Consequently, Proposition~\ref{prop:anchored_PPM} is not a direct consequence of standard analyses of Halpern iteration, which consider repeated application of a fixed nonexpansive operator. This feature also distinguishes our method from the accelerated PPM in~\cite{kim2021accelerated}, which uses a constant proximal step size. At a high level, it likewise distinguishes our approach from that in~\cite{chen2026halpern}, whose outer iteration seeks to approximate Halpern iteration associated with a fixed, large step size.
\end{remark}

\begin{algorithm}[!t]\small
    \caption{Anchored Extra-Proximal Method}\label{alg:aep}
    \begin{algorithmic}[1]
        \STATE \textbf{Input:} initial point $\vz_0\in \dom(\oH)$, parameter $\rho \in[0,1]$, required accuracy $\varepsilon$
        \STATE \textbf{Initialize:} set $\vv_0 \in \oH(\vz_0)$ and $A_0 \leftarrow 0$
        \FOR{iteration $k=0,1\ldots$}
            \IF{$\|\oF(\vz_k) + \vv_k\| \leq \varepsilon$}
            \STATE Return $\vz_k$
            \ELSE 
            \STATE Choose step size $a_k$ and compute $\vz_{k+\frac{1}{2}} = \frac{a_k}{A_{k}+a_k} \vz_0 + \frac{A_k}{A_k+a_k} (\vz_k - a_k (\oF(\vz_k) +\vv_k))$
            \STATE Find $\vz_{k+1}$ with $\vr_{k+1}$ such that \vspace{-1em}
            \begin{equation*}\vspace{-1em}
              \vr_{k+1} \in  a_k \oF(\vz_{k+1}) + a_k \oH(\vz_{k+1}) + \vz_{k+1}  - \Bigl( \frac{a_k}{A_{k}+a_k} \vz_0 + \frac{A_k}{A_k+a_k}\vz_k \Bigr), \quad \|\vr_{k+1}\| \leq \rho \|\vz_{k+1} - \vz_{k+\frac{1}{2}}\|.
            \end{equation*}
            \STATE Update \vspace{-1em}
            \begin{equation*}\vspace{-1em}
               A_{k+1} = A_k + a_k, \quad \vv_{k+1} = \frac{1}{a_k} \left(\vr_{k+1} - a_k \oF(\vz_{k+1}) - \vz_{k+1} + \left( \frac{a_k}{A_{k}+a_k} \vz_0 + \frac{A_k}{A_k+a_k}\vz_k \right) \right).
            \end{equation*}
            \ENDIF
        \ENDFOR
    \end{algorithmic}
\end{algorithm}
\subsection{Anchored Extra-Proximal Framework}\label{subsec:aep}

We are now ready to introduce the Anchored Extra-Proximal (AEP) framework. At a high level, each iteration consists of two principal updates. First, we perform an anchored extrapolation step to construct an intermediate point $\vz_{k+\frac{1}{2}}$. We then compute an inexact solution $\vz_{k+1}$ of the anchored proximal subproblem in~\eqref{eq:anchored_PPM_inclusion}. The term ``extra'' refers to this extrapolation step preceding the proximal update. In a similar spirit as HPE~\cite{monteiro2010complexity}, the inexact solution is required to satisfy a relative-error criterion that controls its tangent residual by the distance $\|\vz_{k+1}-\vz_{k+\frac{1}{2}}\|$. More precisely, each iteration proceeds as follows.

\begin{enumerate}[(i)]

  \item At iteration $k$, we maintain a vector $\vv_k\in\oH(\vz_k)$. We first check whether $ \|\oF(\vz_k)+\vv_k\|\leq\varepsilon$.
  If so, the algorithm terminates and returns $\vz_k$; otherwise, it proceeds to the next step.

  \item Given a step size $a_k>0$ and the current cumulative step size $A_k$, we compute the extrapolated point
  \begin{equation}\label{eq:AEP_k+1/2}
    \vz_{k+\frac{1}{2}}
    =
    \frac{a_k}{A_k+a_k}\vz_0
    +
    \frac{A_k}{A_k+a_k}
    \bigl(\vz_k-a_k(\oF(\vz_k)+\vv_k)\bigr).
  \end{equation}

  \item We then find the next iterate $\vz_{k+1}$ and a residual $\vr_{k+1}$ satisfying
  \begin{align}
    \vr_{k+1}
    &\in
    a_k\oF(\vz_{k+1})
    +
    a_k\oH(\vz_{k+1})
    +
    \vz_{k+1}
    -
    \left(
      \frac{a_k}{A_k+a_k}\vz_0
      +
      \frac{A_k}{A_k+a_k}\vz_k
    \right),
    \label{eq:AEP_k+1}
    \\
    \|\vr_{k+1}\|
    &\leq
    \rho\|\vz_{k+1}-\vz_{k+\frac{1}{2}}\|,
    \label{eq:AEP_error_condition}
  \end{align}
  where $\rho\in[0,1]$ is a prescribed constant. In particular, $\vz_{k+1}$ can be viewed as an approximate solution to the anchored proximal subproblem in \eqref{eq:anchored_PPM_inclusion}, and Condition \eqref{eq:AEP_error_condition} imposes a relative-error criterion so that the tangent residual of the subproblem at $\vz_{k+1}$ is at most $\rho\|\vz_{k+1}-\vz_{k+\frac{1}{2}}\|$.

  \item Finally, we update the cumulative step size and the vector associated with $\oH$ according to
  \begin{equation}\label{eq:v_plus_AEP}
    A_{k+1}=A_k+a_k,
    \qquad
    \vv_{k+1}
    =
    \frac{1}{a_k}
    \left(
      \frac{a_k}{A_k+a_k}\vz_0
      +
      \frac{A_k}{A_k+a_k}\vz_k
      -
      a_k\oF(\vz_{k+1})
      -
      \vz_{k+1}
      +
      \vr_{k+1}
    \right).
  \end{equation}
  As in~\eqref{eq:def_v_+1}, the residual inclusion~\eqref{eq:AEP_k+1} ensures that $\vv_{k+1}\in\oH(\vz_{k+1})$.
\end{enumerate}

The complete procedure is summarized in Algorithm~\ref{alg:aep}. Our main result for the framework shows that, despite computing the proximal step only inexactly, AEP retains the same tangent-residual guarantee as the ideal anchored PPM. We defer the proof to Appendix~\ref{appen:aep}.

\begin{theorem}\label{thm:aep}
  Let $\{\vz_k\}_{k\geq0}$ be generated by AEP in Algorithm~\ref{alg:aep} with $\rho\in[0,1]$. Then, for every $k\geq0$,
  \begin{equation*}
    \res^{\mathrm{tan}}(\vz_{k+1})
    \leq
    \|\oF(\vz_{k+1})+\vv_{k+1}\|
    \leq
    \frac{2\|\vz_0-\vz^\star\|}{A_{k+1}}.
  \end{equation*}
\end{theorem}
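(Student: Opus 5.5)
The plan is a Lyapunov (potential-function) argument of the kind used for EAG/FEG, together with an absorption step for the inexactness $\vr_{k+1}$. Write $\vg_k:=\oF(\vz_k)+\vv_k$, where $\vv_k\in\oH(\vz_k)$ by construction, and $A':=A_{k+1}=A_k+a_k$. I will show that
\[
V_k:=A_k^2\|\vg_k\|^2+2A_k\inprod{\vg_k}{\vz_k-\vz_0}
\]
is nonincreasing, so that $V_{k+1}\le V_0=0$ because $A_0=0$. Granting this, monotonicity of $\oF+\oH$ at the solution gives $\inprod{\vg_{k+1}}{\vz_{k+1}-\vz^\star}\ge0$, since $0\in\oF(\vz^\star)+\oH(\vz^\star)$. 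Hence
\[
0\ge V_{k+1}\ge A'^2\|\vg_{k+1}\|^2-2A'\|\vg_{k+1}\|\,\|\vz_0-\vz^\star\|.
\]
This yields $\|\vg_{k+1}\|\le 2\|\vz_0-\vz^\star\|/A_{k+1}$. The first inequality of the theorem is immediate from $\vv_{k+1}\in\oH(\vz_{k+1})$ and the definition \eqref{eq:tangent_residual}.

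To prove $V_{k+1}\le V_k$, I first rewrite the updates explicitly. From \eqref{eq:v_plus_AEP},
\[
\vz_{k+1}=\tfrac{a_k}{A'}\vz_0+\tfrac{A_k}{A'}\vz_k-a_k\vg_{k+1}+\vr_{k+1},
\]
and from \eqref{eq:AEP_k+1/2},
\[
\vz_{k+1}-\vz_{k+\frac12}=-a_k\vg_{k+1}+\tfrac{a_kA_k}{A'}\vg_k+\vr_{k+1}=:-\vw+\vr_{k+1},
\qquad \vw:=a_k\vg_{k+1}-\tfrac{a_kA_k}{A'}\vg_k .
\]
The only structural inequality available is monotonicity between consecutive iterates, $\inprod{\vg_{k+1}-\vg_k}{\vz_{k+1}-\vz_k}\ge0$, which holds because $\vv_k\in\oH(\vz_k)$ and $\vv_{k+1}\in\oH(\vz_{k+1})$. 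I will multiply it by a positive weight, presumably $A_kA'/a_k$. Then I substitute $\vz_{k+1}-\vz_k=\tfrac{a_k}{A'}(\vz_0-\vz_k)-a_k\vg_{k+1}+\vr_{k+1}$ and also $\vz_{k+1}-\vz_0$ expressed in terms of $\vz_k-\vz_0$. The aim is to show that $V_k-V_{k+1}$ minus this nonnegative multiple equals a quadratic form in $\vg_k,\vg_{k+1}$ plus a cross term in $\vr_{k+1}$. When $\vr_{k+1}=0$, this is exactly the computation for the anchored PPM in Proposition~\ref{prop:anchored_PPM}. I expect the identity to take the form
\[
V_k-V_{k+1}-\tfrac{2A_kA'}{a_k}\inprod{\vg_{k+1}-\vg_k}{\vz_{k+1}-\vz_k}
= \tfrac{A'}{a_k}\Bigl(\|\vw\|^2-2\inprod{\vr_{k+1}}{\vw}\Bigr)
\]
up to the precise constants. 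In that identity, the anchor terms $\inprod{\cdot}{\vz_k-\vz_0}$ cancel because of the matched weights $a_k/A'$ and $A_k/A'$.

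The remaining step, and the one I expect to be the main obstacle, is the absorption of the error term. By \eqref{eq:AEP_error_condition} with $\rho\le1$,
\[
\|\vr_{k+1}\|^2\le\|\vw-\vr_{k+1}\|^2=\|\vw\|^2-2\inprod{\vr_{k+1}}{\vw}+\|\vr_{k+1}\|^2,
\]
so $\|\vw\|^2-2\inprod{\vr_{k+1}}{\vw}\ge0$. This is exactly the sign needed in the expected identity, and it gives $V_{k+1}\le V_k$. The genuinely delicate part is the bookkeeping: the extrapolation point $\vz_{k+\frac12}$ must produce exactly the combination $\vw$ that appears in the Lyapunov difference, and the weight on the monotonicity inequality must be chosen to match. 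If the constants do not align with the weight $A_kA'/a_k$, I will instead leave that weight as a free parameter and solve for it by matching the coefficients of $\inprod{\vg_{k+1}}{\vg_k}$ and $\|\vg_{k+1}\|^2$.
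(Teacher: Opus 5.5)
Your proposal is correct and matches the paper's proof: the same Lyapunov function (yours is twice the paper's $V_k$), the same consecutive-iterate monotonicity inequality weighted by $A_kA_{k+1}/a_k$, the same absorption of $\vr_{k+1}$ via the relative-error condition, and the same final Cauchy--Schwarz step at $\vz^\star$. The exact identity is $V_k-V_{k+1}-\frac{2A_kA_{k+1}}{a_k}\inprod{\vg_{k+1}-\vg_k}{\vz_{k+1}-\vz_k}=\frac{A_{k+1}^2}{a_k^2}\bigl(\|\vz_{k+1}-\vz_{k+\frac12}\|^2-\|\vr_{k+1}\|^2\bigr)$, i.e.\ your $\|\vw\|^2-2\inprod{\vr_{k+1}}{\vw}$ form with constant $A_{k+1}^2/a_k^2$ rather than $A_{k+1}/a_k$; because it is polynomial in $A_k$, it also covers $k=0$, which the paper treats separately.
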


Theorem~\ref{thm:aep} matches the convergence guarantee for the exact anchored PPM in Proposition~\ref{prop:anchored_PPM}. To obtain a concrete algorithm from this abstract framework, it remains to specify how to construct $\vz_{k+1}$ and $\vr_{k+1}$ satisfying~\eqref{eq:AEP_k+1}--\eqref{eq:AEP_error_condition}, as well as how to select the step size $a_k$.

Our key idea is to replace $\oF$ in the anchored proximal subproblem by a more tractable surrogate operator $\oP_k$. We then compute $\vz_{k+1}$ by solving
\begin{equation}\label{eq:surrogate_inclusion}
  0
  \in
  a_k\oP_k(\vz_{k+1})
  +
  a_k\oH(\vz_{k+1})
  +
  \vz_{k+1}
  -
  \left(
    \frac{a_k}{A_k+a_k}\vz_0
    +
    \frac{A_k}{A_k+a_k}\vz_k
  \right).
\end{equation}
In this case, the residual in~\eqref{eq:AEP_k+1} becomes $\vr_{k+1}=a_k\bigl(\oF(\vz_{k+1})-\oP_k(\vz_{k+1})\bigr)$. Consequently, the relative-error condition in \eqref{eq:AEP_error_condition} becomes
\begin{equation}\label{eq:surrogate_error_condition}
  a_k
  \left\|
    \oF(\vz_{k+1})-\oP_k(\vz_{k+1})
  \right\|
  \leq
  \rho\|\vz_{k+1}-\vz_{k+\frac{1}{2}}\|,
\end{equation}
which depends directly on the approximation error of the surrogate model. Choosing $\oP_k$ as the $(p-1)$th-order Taylor model of $\oF$ around the intermediate point $\vz_{k+\frac{1}{2}}$ yields our $p$th-order method. We further develop a line-search procedure that selects $a_k$ so that~\eqref{eq:surrogate_error_condition} is satisfied while ensuring that the step size remains sufficiently large to achieve fast convergence.

\mypara{Comparison with HPE}
The classical HPE framework of~\citet{solodov1999hybrid,monteiro2010complexity} can similarly be viewed as an inexact approximation of the standard PPM based on a relative-error criterion and an extragradient correction. At iteration $k$, it first finds an intermediate point $\vz_{k+\frac{1}{2}}$ and a residual $\vr_{k+\frac{1}{2}}$ satisfying
\begin{equation*}
  \vr_{k+\frac{1}{2}}
  \in
  a_k\oF(\vz_{k+\frac{1}{2}})
  +
  a_k\oH(\vz_{k+\frac{1}{2}})
  +
  \vz_{k+\frac{1}{2}}
  -
  \vz_k,
  \qquad
  \|\vr_{k+\frac{1}{2}}\|
  \leq
  \rho\|\vz_{k+\frac{1}{2}}-\vz_k\|.
\end{equation*}
It then defines
\[
  \vv_{k+\frac{1}{2}}
  =
  \frac{1}{a_k}
  \left(
    \vz_k
    -
    a_k\oF(\vz_{k+\frac{1}{2}})
    -
    \vz_{k+\frac{1}{2}}
    +
    \vr_{k+\frac{1}{2}}
  \right)
  \in
  \oH(\vz_{k+\frac{1}{2}})
\]
and performs the extragradient update
\begin{equation*}
  \vz_{k+1}
  =
  \vz_k
  -
  a_k
  \left(
    \oF(\vz_{k+\frac{1}{2}})
    +
    \vv_{k+\frac{1}{2}}
  \right).
\end{equation*}

There are two main differences between the two frameworks. First, classical HPE computes an inexact proximal point and then performs an extragradient update using the operator at that point. AEP instead first constructs the anchored extrapolation point $\vz_{k+\frac{1}{2}}$ and then computes an inexact anchored proximal point $\vz_{k+1}$; the abstract AEP framework does not require an operator evaluation at the extrapolated point. Second, AEP incorporates the anchor $\vz_0$ into both updates.

\subsection{Fast Extragradient as First-Order AEP}\label{subsec:feg}
As a first application of the AEP framework, we show that the composite Fast Extragradient (FEG) method of~\citet{cai2024accelerated} can be recovered as a special case. We impose the following standard smoothness assumption on $\oF$.
\begin{assumption}\label{assm:operator_lips}
  The operator $\oF$ is $L_1$-Lipschitz, i.e., $\|\oF(\vz) - \oF(\vz')\| \leq L_1 \|\vz-\vz'\|$ for all $\vz, \vz' \in \reals^d$. 
\end{assumption}

Under Assumption~\ref{assm:operator_lips}, a natural choice of the surrogate operator in~\eqref{eq:surrogate_inclusion} is $\oP_k(\vz):=\oF(\vz_{k+\frac{1}{2}})$.
Indeed, Lipschitz continuity gives
\[
  a_k
  \|\oF(\vz_{k+1})-\oP_k(\vz_{k+1})\|
  \leq
  a_kL_1
  \|\vz_{k+1}-\vz_{k+\frac{1}{2}}\|.
\]
Therefore, the relative-error condition~\eqref{eq:surrogate_error_condition} holds whenever $a_kL_1\leq\rho$. To recover FEG, we set $\rho=1$ and choose the constant step size $a_k\equiv\eta:=\frac{1}{L_1}$.
Since $A_k=k\eta$, Algorithm~\ref{alg:aep} then reduces to
\begin{equation}\label{eq:composite_FEG}
  \begin{aligned}
    \vz_{k+\frac{1}{2}}
    &=
    \frac{1}{k+1}\vz_0
    +
    \frac{k}{k+1}
    \bigl(
      \vz_k-\eta(\oF(\vz_k)+\vv_k)
    \bigr),
    \\
    \vz_{k+1}
    &=
    (\oI+\eta\oH)^{-1}
    \left(
      \frac{1}{k+1}\vz_0
      +
      \frac{k}{k+1}\vz_k
      -
      \eta\oF(\vz_{k+\frac{1}{2}})
    \right),
    \\
    \vv_{k+1}
    &=
    \frac{1}{\eta}
    \left(
      \frac{1}{k+1}\vz_0
      +
      \frac{k}{k+1}\vz_k
      -
      \eta\oF(\vz_{k+\frac{1}{2}})
      -
      \vz_{k+1}
    \right)
    \in\oH(\vz_{k+1}).
  \end{aligned}
\end{equation}
This iteration coincides with the composite FEG method of~\cite{cai2024accelerated} in the monotone setting, corresponding to a comonotonicity parameter of zero. As an immediate consequence of Theorem~\ref{thm:aep}, we recover the convergence guarantee in~\cite[Theorem~4.1]{cai2024accelerated} for this setting.

\begin{corollary}
  Let $\{\vz_k\}_{k\geq0}$ be generated by the composite FEG method in~\eqref{eq:composite_FEG} with $\eta=1/L_1$. Then, for every $k\geq0$,
  \begin{equation*}
    \res^{\mathrm{tan}}(\vz_{k+1})
    \leq
    \|\oF(\vz_{k+1})+\vv_{k+1}\|
    \leq
    \frac{2L_1\|\vz_0-\vz^\star\|}{k+1}.
  \end{equation*}
\end{corollary}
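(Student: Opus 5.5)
This corollary follows from Theorem~\ref{thm:aep} once we check two facts. First, the iteration~\eqref{eq:composite_FEG} is a valid run of Algorithm~\ref{alg:aep} with $\rho=1$, $a_k\equiv\eta=1/L_1$, and surrogate $\oP_k(\vz)\equiv\oF(\vz_{k+\frac12})$. Second, $A_{k+1}=(k+1)\eta$. Substituting $A_{k+1}=(k+1)/L_1$ into the bound $2\|\vz_0-\vz^\star\|/A_{k+1}$ then gives exactly $2L_1\|\vz_0-\vz^\star\|/(k+1)$.

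\textbf{Step 1: the extrapolation step.} With $a_k=\eta$ and $A_k=k\eta$, the weights become
\[
\frac{a_k}{A_k+a_k}=\frac{1}{k+1}, \qquad \frac{A_k}{A_k+a_k}=\frac{k}{k+1}.
\]
So~\eqref{eq:AEP_k+1/2} is literally the first line of~\eqref{eq:composite_FEG}.

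\textbf{Step 2: the inclusion~\eqref{eq:AEP_k+1}.} Write $\vw_k:=\frac{1}{k+1}\vz_0+\frac{k}{k+1}\vz_k$ and set $\vr_{k+1}:=\eta\bigl(\oF(\vz_{k+1})-\oF(\vz_{k+\frac12})\bigr)$. The resolvent step gives
\[
\vw_k-\eta\oF(\vz_{k+\frac12})-\vz_{k+1}\in\eta\oH(\vz_{k+1}),
\]
which is equivalent to the inclusion
\[
\vv_{k+1}:=\tfrac1\eta\bigl(\vw_k-\eta\oF(\vz_{k+\frac12})-\vz_{k+1}\bigr)\in\oH(\vz_{k+1}).
\]
Rearranging, $\vr_{k+1}=\eta\oF(\vz_{k+1})+\eta\vv_{k+1}+\vz_{k+1}-\vw_k$, which is~\eqref{eq:AEP_k+1}.

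\textbf{Step 3: the update~\eqref{eq:v_plus_AEP}.} Substituting this $\vr_{k+1}$ into~\eqref{eq:v_plus_AEP}, the $\oF(\vz_{k+1})$ terms cancel. What remains is exactly the $\vv_{k+1}$ in the third line of~\eqref{eq:composite_FEG}.

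\textbf{Step 4: the relative-error condition.} By Assumption~\ref{assm:operator_lips},
\[
\|\vr_{k+1}\|\le \eta L_1\|\vz_{k+1}-\vz_{k+\frac12}\|=\|\vz_{k+1}-\vz_{k+\frac12}\|,
\]
so~\eqref{eq:AEP_error_condition} holds with $\rho=1$.

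\textbf{Step 5: termination and initialization.} The termination test in Algorithm~\ref{alg:aep} only stops the iteration early. We therefore take $\varepsilon=0$, or simply note that the analysis of Theorem~\ref{thm:aep} concerns iterates that are actually generated, so the bound applies to every $k\ge0$. The initialization $\vv_0\in\oH(\vz_0)$, $A_0=0$ matches as well.

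There is no real obstacle here. The only care needed is the bookkeeping in Steps 2–3: the residual must be identified with the Lipschitz error term, and the $\vv_{k+1}$ of the AEP update must agree with the FEG one.
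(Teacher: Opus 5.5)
Your proposal is correct and follows the paper's own argument. You identify composite FEG as AEP with $\rho=1$, $a_k\equiv\eta=1/L_1$, and surrogate $\oP_k\equiv\oF(\vz_{k+\frac12})$, so that $\vr_{k+1}=\eta(\oF(\vz_{k+1})-\oF(\vz_{k+\frac12}))$ and Lipschitz continuity gives the relative-error condition, and then substitute $A_{k+1}=(k+1)/L_1$ into Theorem~\ref{thm:aep}. For Step 5, setting $\varepsilon=0$ is not quite the right justification, since the algorithm would still halt at an exact zero while FEG keeps iterating; the clean reason is that the Lyapunov descent in Lemma~\ref{lem:Lyapunov_AEP} uses only the update relations and never the stopping test.
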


\begin{algorithm}[!t]\small
  \caption{Anchored Extra-Proximal Newton Method}
  \label{alg:aep_newton}
  \begin{algorithmic}[1]
    \STATE \textbf{Input:} initial point $\vz_0\in\dom(\oH)$, target
    accuracy $\varepsilon>0$, and Jacobian Lipschitz constant $L_2>0$
    \STATE \textbf{Initialize:} choose $\vv_0\in\oH(\vz_0)$ and set $A_0=0$
    \FOR{$k=0,1,\ldots$}
      \IF{$\|\oF(\vz_k)+\vv_k\|\leq\varepsilon$}
        \STATE Return $\vz_k$
      \ENDIF
      \STATE Apply the bisection line search in
      Subroutine~\ref{alg:second_order_ls} to select a step size $a_k$
      \STATE Compute $\vz_{k+\frac{1}{2}} = \frac{a_k}{A_{k}+a_k} \vz_0 + \frac{A_k}{A_k+a_k} (\vz_k - a_k (\oF(\vz_k) +\vv_k))$
      \STATE Find $\vz_{k+1}$ such that\vspace{-1em} $$\vspace{-1em}
      0 \in
    a_k
    \bigl(
      \oF(\vz_{k+\frac{1}{2}})
      +
      D\oF(\vz_{k+\frac{1}{2}})
      (\vz_{k+1}-\vz_{k+\frac{1}{2}})
    \bigr)
    +
    a_k\oH(\vz_{k+1})
    +
    \vz_{k+1}-
    \frac{a_k}{A_k+a_k}\vz_0
    -
    \frac{A_k}{A_k+a_k}\vz_k$$
    \STATE Update \vspace{-1em}
    \begin{equation*}\vspace{-1em}
               A_{k+1} = A_k + a_k, \quad \vv_{k+1} = \frac{1}{a_k} \bigl(  \frac{a_k}{A_{k}+a_k} \vz_0 + \frac{A_k}{A_k+a_k}\vz_k - \vz_{k+1} \bigr) - \bigl(
      \oF(\vz_{k+\frac{1}{2}})
      +
      D\oF(\vz_{k+\frac{1}{2}})
      (\vz_{k+1}-\vz_{k+\frac{1}{2}})
    \bigr).
    \end{equation*}
    \ENDFOR
  \end{algorithmic}
\end{algorithm}
\section{Anchored Extra-Proximal Newton Method}\label{sec:AEP_Newton}

In this section, we instantiate the AEP framework in Algorithm~\ref{alg:aep} using a second-order oracle, where we have access to both $\oF$ and its Jacobian $D\oF$. We call the resulting method Anchored Extra-Proximal Newton (AEP Newton). Our analysis relies on the following smoothness assumption, which is also standard in the literature.

\begin{assumption}\label{assum:Lips_Jacobian}
  The operator $\oF$ has an $L_2$-Lipschitz continuous Jacobian, i.e., $\|D\oF(\vz)-D\oF(\vz')\|_{\op}\leq L_2\|\vz-\vz'\|$
  for all $\vz,\vz'\in\reals^d$.
\end{assumption}

Under Assumption~\ref{assum:Lips_Jacobian}, the standard Taylor remainder bound gives
\begin{equation}\label{eq:second_order_error}
  \left\|
    \oF(\vz)
    -
    \oF(\vz_{k+\frac{1}{2}})
    -
    D\oF(\vz_{k+\frac{1}{2}})
    (\vz-\vz_{k+\frac{1}{2}})
  \right\|
  \leq
  \frac{L_2}{2}
  \|\vz-\vz_{k+\frac{1}{2}}\|^2.
\end{equation}
This naturally suggests using the affine Taylor model
\[
  \oP_k(\vz)
  :=
  \oF(\vz_{k+\frac{1}{2}})
  +
  D\oF(\vz_{k+\frac{1}{2}})
  (\vz-\vz_{k+\frac{1}{2}})
\]
as the surrogate operator in~\eqref{eq:surrogate_inclusion}. With this choice, the surrogate subproblem becomes
\begin{equation}\label{eq:AEP_newton_subproblem}
    0 \in
    a_k
    \left(
      \oF(\vz_{k+\frac{1}{2}})
      +
      D\oF(\vz_{k+\frac{1}{2}})
      (\vz_{k+1}-\vz_{k+\frac{1}{2}})
    \right)
    +
    a_k\oH(\vz_{k+1})
    +
    \vz_{k+1}-
    \frac{a_k}{A_k+a_k}\vz_0
    -
    \frac{A_k}{A_k+a_k}\vz_k,
\end{equation}
and the relative-error condition
in~\eqref{eq:surrogate_error_condition} reduces to
\begin{equation}\label{eq:AEP_newton_error_condition}
  a_k
  \left\|
    \oF(\vz_{k+1})
    -
    \oF(\vz_{k+\frac{1}{2}})
    -
    D\oF(\vz_{k+\frac{1}{2}})
    (\vz_{k+1}-\vz_{k+\frac{1}{2}})
  \right\|
  \leq
  \rho
  \|\vz_{k+1}-\vz_{k+\frac{1}{2}}\|.
\end{equation}
Note that the subproblem in \eqref{eq:AEP_newton_subproblem} has a unique solution. Indeed, since $\oF$ is differentiable
and monotone, $D\oF(\vz_{k+\frac{1}{2}})$ is a monotone linear operator.
Consequently, the operator defining~\eqref{eq:AEP_newton_subproblem} is maximally
and strongly monotone due to the identity term. 

The surrogate subproblem~\eqref{eq:AEP_newton_subproblem} is more structured than the exact anchored proximal subproblem~\eqref{eq:anchored_PPM_inclusion}, because the nonlinear operator $\oF$ has been replaced by an affine approximation. In particular, when $\oH\equiv0$, the subproblem reduces to a linear system, whose solution can be expressed as
\begin{equation*}
  \vz_{k+1}
  =
  \vz_{k+\frac{1}{2}}
  -
  a_k
  \bigl(
    \oI+a_kD\oF(\vz_{k+\frac{1}{2}})
  \bigr)^{-1}
  \left(
    \oF(\vz_{k+\frac{1}{2}})
    -
    \frac{A_k}{A_k+a_k}\oF(\vz_k)
  \right).
\end{equation*}
Newton-type proximal subproblems of the form~\eqref{eq:AEP_newton_subproblem} also appear frequently in second-order methods for monotone inclusions and variational inequalities; see, for example,~\cite{monteiro2012iteration,jiang2025generalized}.

We next explain the mechanism through which the second-order information can improve the convergence rate. Combining the Taylor bound~\eqref{eq:second_order_error} with~\eqref{eq:AEP_newton_error_condition}, we see that the relative-error condition is guaranteed whenever
\begin{equation}\label{eq:aep_newton_stepsize}
  a_k
  \leq
  \frac{2\rho}
  {L_2\|\vz_{k+1}-\vz_{k+\frac{1}{2}}\|},
\end{equation}
provided that $\vz_{k+1}\neq\vz_{k+\frac{1}{2}}$. Thus, as the displacement between the intermediate and next iterates decreases, the Taylor model permits increasingly large step sizes. Since Theorem~\ref{thm:aep} bounds the tangent residual in terms of $1/A_{k+1}$, larger step sizes allow the cumulative step size $A_{k+1}$ to grow more rapidly and thereby lead to faster convergence.

To make this intuition quantitative, fix $\rho\in(0,1)$ and suppose that each
step size can be chosen within a constant factor of the largest value certified
by~\eqref{eq:aep_newton_stepsize}; specifically, suppose that
\begin{equation}\label{eq:stepsize_lb}
  a_i \geq \frac{\sqrt{2}\rho}
  {L_2\|\vz_{i+1}-\vz_{i+\frac{1}{2}}\|}, \quad \forall i \geq 0.
\end{equation}
Together with~\eqref{eq:aep_newton_stepsize}, this places $a_i$ within a factor of
$\sqrt{2}$ of the upper bound. The line search in
Section~\ref{subsec:ls} will enforce precisely such a condition unless it has
already found an $\varepsilon$-accurate point. The key ingredient in translating
this step-size guarantee into a convergence rate is the following stability
estimate, whose proof is deferred to Appendix~\ref{appen:stability}.

\begin{lemma}\label{lem:stability}
  Let $\{\vz_k\}_{k\geq0}$ be generated by AEP in Algorithm~\ref{alg:aep} with $\rho\in[0,1)$. Then, for every $k\geq0$,
  \begin{equation}\label{eq:aep_stability}
    \sum_{i=0}^k
    \frac{A_{i+1}^2}{a_i^2}
    \|\vz_{i+1}-\vz_{i+\frac{1}{2}}\|^2
    \leq
    \frac{1}{1-\rho^2}
    \|\vz_0-\vz^\star\|^2.
  \end{equation}
\end{lemma}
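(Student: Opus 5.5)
The plan is a potential-function (Lyapunov) argument of the type used for EAG/FEG. Writing $\vg_k:=\oF(\vz_k)+\vv_k$, I would use the potential
\[
  V_k := A_k^2\|\vg_k\|^2 + 2A_k\inprod{\vg_k}{\vz_k-\vz_0},
\]
which satisfies $V_0=0$ since $A_0=0$. The aim is a one-step descent inequality
\begin{equation*}
  V_{k+1} + (1-\rho^2)\frac{A_{k+1}^2}{a_k^2}\|\vz_{k+1}-\vz_{k+\frac12}\|^2 \leq V_k ,
\end{equation*}
presumably the same inequality that drives Theorem~\ref{thm:aep}, kept with the slack term rather than discarded. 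Summing from $0$ to $k$ then gives $\sum_{i=0}^k (1-\rho^2)\frac{A_{i+1}^2}{a_i^2}\|\vz_{i+1}-\vz_{i+\frac12}\|^2 \le -V_{k+1}$.

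To finish, I need a lower bound on $V_{k+1}$. Since $\vg_{k+1}\in(\oF+\oH)(\vz_{k+1})$ and $0\in(\oF+\oH)(\vz^\star)$, monotonicity gives $\inprod{\vg_{k+1}}{\vz_{k+1}-\vz^\star}\ge 0$. Hence $\inprod{\vg_{k+1}}{\vz_{k+1}-\vz_0}\ge -\|\vg_{k+1}\|\|\vz_0-\vz^\star\|$. It follows that
\[
  V_{k+1}\ge A_{k+1}^2\|\vg_{k+1}\|^2-2A_{k+1}\|\vg_{k+1}\|\|\vz_0-\vz^\star\|\ge -\|\vz_0-\vz^\star\|^2,
\]
by minimizing the quadratic in $t=A_{k+1}\|\vg_{k+1}\|$. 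Dividing by $1-\rho^2>0$ gives \eqref{eq:aep_stability}.

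For the one-step inequality, I would set $\bar\vz_k:=\frac{a_k}{A_{k+1}}\vz_0+\frac{A_k}{A_{k+1}}\vz_k$. The update rule \eqref{eq:v_plus_AEP} gives $\vz_{k+1}=\bar\vz_k+\vr_{k+1}-a_k\vg_{k+1}$, and \eqref{eq:AEP_k+1/2} gives $\vz_{k+\frac12}=\bar\vz_k-\frac{a_kA_k}{A_{k+1}}\vg_k$. Consequently
\[
  \vz_{k+1}-\vz_{k+\frac12}=\vr_{k+1}-a_k\vg_{k+1}+\tfrac{a_kA_k}{A_{k+1}}\vg_k, \qquad \vz_{k+1}-\vz_k=\tfrac{a_k}{A_{k+1}}(\vz_0-\vz_k)+\vr_{k+1}-a_k\vg_{k+1}.
\]
The two ingredients are the monotonicity inequality $\inprod{\vg_{k+1}-\vg_k}{\vz_{k+1}-\vz_k}\ge0$, multiplied by a suitable factor (I expect $2A_kA_{k+1}/a_k$), and the error condition written as $0\le \rho^2\|\vz_{k+1}-\vz_{k+\frac12}\|^2-\|\vr_{k+1}\|^2$, multiplied by $A_{k+1}^2/a_k^2$. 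I would add these to $V_k-V_{k+1}$ and expand everything in terms of $\vg_k$, $\vg_{k+1}$, $\vr_{k+1}$ and $\vz_k-\vz_0$. The goal is to show that the remainder equals $(1-\rho^2)\frac{A_{k+1}^2}{a_k^2}\|\vz_{k+1}-\vz_{k+\frac12}\|^2$ plus nonnegative terms, or exactly that.

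The main obstacle is this algebraic verification. One must check that the cross terms involving $\vz_k-\vz_0$ cancel, which fixes the multiplier on the monotonicity inequality via $A_{k+1}=A_k+a_k$. One must also check that the terms in $\vr_{k+1}$ combine with $-\|\vr_{k+1}\|^2$ into a perfect square $\frac{A_{k+1}^2}{a_k^2}\|\vr_{k+1}-a_k\vg_{k+1}+\frac{a_kA_k}{A_{k+1}}\vg_k\|^2$.

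My first step will be the case $\vr_{k+1}=0$, which is the anchored PPM of Proposition~\ref{prop:anchored_PPM}, to pin down the multipliers. After that I would add $\vr_{k+1}$ and use $\|\vz_{k+1}-\vz_{k+\frac12}\|^2$ as the square to complete. If exact equality fails, I would keep a nonnegative leftover term, which suffices for the inequality.
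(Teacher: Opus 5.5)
Your plan is correct and is essentially the paper's proof. Your $V_k$ is exactly twice the Lyapunov function \eqref{eq:Lyapunov}. The paper likewise multiplies monotonicity by $A_kA_{k+1}/a_k$, completes squares via $\|A_{k+1}(\vg_{k+1}-\vr_{k+1}/a_k)-A_k\vg_k\|=\frac{A_{k+1}}{a_k}\|\vz_{k+1}-\vz_{k+\frac12}\|$, telescopes, and bounds $-V_{k+1}$ by monotonicity at $\vz^\star$, Cauchy--Schwarz and Young, just as you do.

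The algebra you flag closes with exact equality, $V_k-V_{k+1}-(1-\rho^2)\frac{A_{k+1}^2}{a_k^2}\|\vz_{k+1}-\vz_{k+\frac12}\|^2=\frac{2A_kA_{k+1}}{a_k}\langle\vg_{k+1}-\vg_k,\vz_{k+1}-\vz_k\rangle+\frac{A_{k+1}^2}{a_k^2}\bigl(\rho^2\|\vz_{k+1}-\vz_{k+\frac12}\|^2-\|\vr_{k+1}\|^2\bigr)$. This identity also holds at $k=0$, where the first term vanishes. The paper treats $k=0$ separately only because it divides by $A_k$ along the way.
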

Indeed,~\eqref{eq:stepsize_lb} implies
$\|\vz_{i+1}-\vz_{i+\frac{1}{2}}\|
\geq \sqrt{2}\rho/(L_2a_i)$. Substituting this lower bound into
Lemma~\ref{lem:stability} yields
\begin{equation}\label{eq:step_lb_w_stability}
  \sum_{i=0}^k \frac{A_{i+1}^2}{a_i^4}
  \leq
  \frac{L_2^2}{2\rho^2(1-\rho^2)}
    \|\vz_0-\vz^\star\|^2. 
\end{equation}
On the other hand, H\"older's inequality gives
\[
  \left(\sum_{i=0}^k a_i\right)^{4/5}
  \left(\sum_{i=0}^k \frac{A_{i+1}^2}{a_i^4}\right)^{1/5}
  \geq
  \sum_{i=0}^k A_{i+1}^{2/5}.
\]
Since $A_{k+1}=\sum_{i=0}^k a_i$, combining the last two displays gives
\begin{equation*}
  A_{k+1}^{4/5}
  \geq
  \left(
    \frac{L_2^2\|\vz_0-\vz^\star\|^2}
    {2\rho^2(1-\rho^2)}
  \right)^{-1/5}
  \sum_{i=0}^k A_{i+1}^{2/5}.
\end{equation*}
The sequence-growth lemma of~\cite[Lemma~12]{bubeck2019near} therefore implies
$A_{k+1}=\Omega((k+1)^{5/2})$. The residual bound in
Theorem~\ref{thm:aep} then yields the
$\bigO(k^{-5/2})$ rate established formally in
Theorem~\ref{thm:second_order_complexity}.

The prescription in~\eqref{eq:stepsize_lb}, however, is implicit and therefore cannot be used directly. The difficulty is that the step size $a_k$ and the iterates appearing on its right-hand side are mutually dependent. In particular, $a_k$ determines the anchoring weights and the intermediate point $\vz_{k+\frac{1}{2}}$, which is also the point at which the Jacobian is evaluated. Meanwhile, $\vz_{k+1}$ is obtained by solving~\eqref{eq:AEP_newton_subproblem}, whose coefficients likewise depend on $a_k$. Thus, the displacement needed to select $a_k$ in \eqref{eq:aep_newton_stepsize} is only known after the corresponding trial subproblem has been solved.

Similar step-size coupling arises in higher-order HPE and tensor methods; see~\cite{monteiro2012iteration,monteiro2013accelerated,jiang2021optimal,bubeck2019near,gasnikov2019optimal}. To resolve it, we develop in Section~\ref{subsec:ls} a bisection-based line-search procedure that jointly determines the step size and the corresponding iterates while enforcing the relative-error condition~\eqref{eq:AEP_newton_error_condition}.

\subsection{Bisection Line Search}\label{subsec:ls}

We now develop a bisection line-search scheme for selecting the step size in the AEP Newton method. The procedure has two possible outcomes: it either finds an admissible step whose size is within a constant factor of the ideal scale in~\eqref{eq:aep_newton_stepsize}, or it directly returns a point with tangent residual at most $\varepsilon$. For concreteness, throughout this subsection, we set
$
  \rho=\frac{1}{\sqrt{2}}
$
in the AEP relative-error condition.

Fix a target accuracy $\varepsilon>0$ and consider an outer iteration $k$. 
For each trial step size $a>0$, let $\vz_{k+\frac{1}{2}}(a)$ be the intermediate point obtained by replacing $a_k$ with $a$ in~\eqref{eq:AEP_k+1/2}. At this intermediate point, we form the affine Taylor model
\begin{equation}\label{eq:trial_model}
  \oP_{k,a}(\vz)
  :=
  \oF(\vz_{k+\frac{1}{2}}(a))
  +
  D\oF(\vz_{k+\frac{1}{2}}(a))
  \bigl(\vz-\vz_{k+\frac{1}{2}}(a)\bigr)
\end{equation}
and compute $\vz_{k+1}(a)$ as the solution of
\begin{equation}\label{eq:trial_subproblem}
  0
  \in
  a\oP_{k,a}(\vz_{k+1}(a))
  +
  a\oH(\vz_{k+1}(a))
  +
  \vz_{k+1}(a)
  -
  \left(
    \frac{a}{A_k+a}\vz_0
    +
    \frac{A_k}{A_k+a}\vz_k
  \right).
\end{equation}
We then define
\begin{equation}\label{eq:trial_v}
    \vv_{k+1}(a)
    :=
    \frac{1}{a}
    \left(
      \frac{a}{A_k+a}\vz_0
      +
      \frac{A_k}{A_k+a}\vz_k
      -
      \vz_{k+1}(a)
    \right)
    -
    \oP_{k,a}(\vz_{k+1}(a))\in
    \oH(\vz_{k+1}(a)),
\end{equation}
where the inclusion follows from~\eqref{eq:trial_subproblem}.

We measure the quality of a trial step using the merit function
\begin{equation}\label{eq:ls_merit}
  \phi_k(a)
  :=
  L_2a
  \|\vz_{k+1}(a)-\vz_{k+\frac{1}{2}}(a)\|,
  \qquad
  \phi_k(0):=0.
\end{equation}
By the Taylor remainder bound~\eqref{eq:second_order_error},
\begin{equation}\label{eq:trial_error_merit}
  a
  \|\oF(\vz_{k+1}(a))-\oP_{k,a}(\vz_{k+1}(a))\|
  \leq
  \frac{\phi_k(a)}{2}
  \|\vz_{k+1}(a)-\vz_{k+\frac{1}{2}}(a)\|.
\end{equation}
Hence, every trial with $\phi_k(a)\leq\sqrt{2}$ satisfies the
relative-error condition in~\eqref{eq:surrogate_error_condition} with
$\rho=1/\sqrt{2}$. Moreover, the lower bound~\eqref{eq:stepsize_lb} is
equivalent to $\phi_k(a)\geq1$. A bisection trial is therefore accepted
precisely when
\begin{equation}\label{eq:ls_acceptance_window}
  1
  \leq
  \phi_k(a)
  =
  L_2a
  \|\vz_{k+1}(a)-\vz_{k+\frac{1}{2}}(a)\|
  \leq
  \sqrt{2}.
\end{equation}
For an accepted trial $a_k=a$, this implies
\begin{equation}\label{eq:ls_step_interval}
  \frac{1}
  {L_2\|\vz_{k+1}-\vz_{k+\frac{1}{2}}\|}
  \leq
  a_k
  \leq
  \frac{\sqrt{2}}
  {L_2\|\vz_{k+1}-\vz_{k+\frac{1}{2}}\|}.
\end{equation}
Thus, the accepted step size is within a factor of $\sqrt{2}$ of the largest
step size certified by the Taylor remainder bound.

The line search begins by evaluating the following computable upper trial value:
\begin{equation}\label{eq:ls_cap}
  \bar a_k
  :=
  \max\left\{
    \frac{2A_k\|\oF(\vz_k)+\vv_k\|}{\varepsilon},
    \sqrt{\frac{2(1+\sqrt{2})}{L_2\varepsilon}}
  \right\}.
\end{equation}
The following lemma shows that if this trial step size satisfies the relative-error condition, then its associated point already meets the target accuracy. We defer the proof to Appendix~\ref{appen:ls_residual}.

\begin{lemma}\label{lem:ls_residual}
  If $\phi_k(\bar a_k)\leq\sqrt{2}$, then
  $
    \res^{\mathrm{tan}}(\vz_{k+1}(\bar a_k))
    \leq
    \varepsilon$.
\end{lemma}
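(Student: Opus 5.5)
The plan is a direct triangle-inequality computation. I will write $a:=\bar a_k$, $\vz_+:=\vz_{k+1}(a)$, $\vz_{1/2}:=\vz_{k+\frac12}(a)$, and let $\vw:=\frac{a}{A_k+a}\vz_0+\frac{A_k}{A_k+a}\vz_k$ be the anchored point.

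First I bound $\res^{\mathrm{tan}}(\vz_+)$ by a quantity I can compute. By \eqref{eq:trial_v}, $\vv_{k+1}(a)\in\oH(\vz_+)$, so
\[
\res^{\mathrm{tan}}(\vz_+)\le\|\oF(\vz_+)+\vv_{k+1}(a)\|
=\Bigl\|\bigl(\oF(\vz_+)-\oP_{k,a}(\vz_+)\bigr)+\tfrac1a(\vw-\vz_+)\Bigr\|.
\]
I then split $\vw-\vz_+=(\vw-\vz_{1/2})+(\vz_{1/2}-\vz_+)$. This gives three terms, and I bound each one.

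\emph{Taylor error term.} By \eqref{eq:trial_error_merit} and the definition $\|\vz_+-\vz_{1/2}\|=\phi_k(a)/(L_2a)$,
\[
\|\oF(\vz_+)-\oP_{k,a}(\vz_+)\|\le\frac{\phi_k(a)^2}{2L_2a^2}\le\frac{1}{L_2a^2},
\]
where the last step uses $\phi_k(a)\le\sqrt2$.

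\emph{Displacement term.} Directly from the definition of $\phi_k$,
\[
\tfrac1a\|\vz_{1/2}-\vz_+\|=\frac{\phi_k(a)}{L_2a^2}\le\frac{\sqrt2}{L_2a^2}.
\]

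\emph{Extrapolation term.} From \eqref{eq:AEP_k+1/2}, $\vw-\vz_{1/2}=\frac{A_k}{A_k+a}\,a\,(\oF(\vz_k)+\vv_k)$. Hence
\[
\tfrac1a\|\vw-\vz_{1/2}\|\le\frac{A_k}{a}\|\oF(\vz_k)+\vv_k\|\le\frac{\varepsilon}{2},
\]
where I use the first entry of the maximum in \eqref{eq:ls_cap}, namely $a\ge 2A_k\|\oF(\vz_k)+\vv_k\|/\varepsilon$. If $A_k=0$, this term is simply zero.

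\emph{Combining.} Adding the three bounds gives
\[
\res^{\mathrm{tan}}(\vz_+)\le\frac{\varepsilon}{2}+\frac{1+\sqrt2}{L_2a^2}.
\]
The second entry of the maximum in \eqref{eq:ls_cap} gives $a^2\ge 2(1+\sqrt2)/(L_2\varepsilon)$, so the last fraction is at most $\varepsilon/2$. Therefore $\res^{\mathrm{tan}}(\vz_+)\le\varepsilon$.

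There is no real obstacle here. The only point needing care is the extrapolation term: one must recognize that $\vw-\vz_{1/2}$ is exactly the anchored multiple of the previous residual $\oF(\vz_k)+\vv_k$. This is precisely why the first entry of $\bar a_k$ involves $A_k\|\oF(\vz_k)+\vv_k\|/\varepsilon$.
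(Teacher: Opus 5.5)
Your proposal is correct and follows essentially the same route as the paper. The paper likewise rewrites $\oP_{k,a}(\vz_{k+1}(a))+\vv_{k+1}(a)$ as $\frac{A_k}{A_k+a}(\oF(\vz_k)+\vv_k)-\frac{1}{a}(\vz_{k+1}(a)-\vz_{k+\frac12}(a))$, applies the triangle inequality and the Taylor remainder, and uses $\phi_k(a)\le\sqrt2$ to reach the bound $\frac{A_k\|\oF(\vz_k)+\vv_k\|}{a}+\frac{1+\sqrt2}{L_2a^2}$; each term is then at most $\varepsilon/2$ by the two entries of the maximum defining $\bar a_k$.
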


Accordingly, if $\phi_k(\bar a_k)\leq\sqrt{2}$, we terminate the entire outer method and return the corresponding trial point. Otherwise,
\[
  \phi_k(0)=0<1<\sqrt{2}<\phi_k(\bar a_k).
\]
Thus, the merit value at the left endpoint is below the acceptance range
$[1,\sqrt{2}]$, while the merit value at the right endpoint is above it.
We therefore use $[0,\bar a_k]$ as the initial bisection interval. Starting from $a^-=0$ and $a^+=\bar a_k$, the bisection maintains the invariants
\[
  \phi_k(a^-)<1
  \qquad\text{and}\qquad
  \phi_k(a^+)>\sqrt{2}.
\]
At each step, it evaluates the midpoint
$a=\frac{a^-+a^+}{2}$.
If $\phi_k(a)<1$, the lower endpoint is replaced by $a$; if
$\phi_k(a)>\sqrt{2}$, the upper endpoint is replaced by $a$; otherwise, the trial is accepted. Thus, every unsuccessful bisection step preserves the two invariants and halves the length of the search interval.
The complete procedure is summarized in Subroutine~\ref{alg:second_order_ls}.

\begin{subroutine}[!t]
  \caption{Bisection line search at iteration $k$}
  \label{alg:second_order_ls}
  \begin{algorithmic}[1]
    \STATE \textbf{Input:}
    $(\vz_0,\vz_k,\vv_k,A_k,\varepsilon,L_2)$
    \STATE Compute $\bar a_k$ according to~\eqref{eq:ls_cap}
    \STATE Compute the trial
    $(\vz_{k+\frac{1}{2}}(\bar a_k),
      \vz_{k+1}(\bar a_k),
      \vv_{k+1}(\bar a_k))$
    \IF{$\phi_k(\bar a_k)\leq\sqrt{2}$}
      \STATE \textbf{Terminate} the outer method and return
      $(\vz_{k+1}(\bar a_k),\vv_{k+1}(\bar a_k))$
    \ENDIF
    \STATE Set $a^-=0$ and $a^+=\bar a_k$
    \WHILE{true}
      \STATE Set $a=(a^-+a^+)/2$
      \STATE Compute the trial
      $(\vz_{k+\frac{1}{2}}(a),
        \vz_{k+1}(a),
        \vv_{k+1}(a))$
      \IF{$\phi_k(a)<1$}
        \STATE Set $a^-=a$
      \ELSIF{$\phi_k(a)>\sqrt{2}$}
        \STATE Set $a^+=a$
      \ELSE
        \STATE \textbf{Accept} $a_k=a$ and return
        $\vz_{k+\frac{1}{2}}(a)$,
        $\vz_{k+1}(a)$, and $\vv_{k+1}(a)$
        as the next outer iterate
      \ENDIF
    \ENDWHILE
  \end{algorithmic}
\end{subroutine}

Note that this procedure is guaranteed to terminate if $\phi_k$ is continuous. Moreover, the number of trials before termination can be controlled by the Lipschitz modulus of $\phi_k$, and this is formalized in the following lemma.

\begin{lemma}[Bisection complexity]
  \label{lem:ls_bisection_complexity}
  Suppose that the upper trial step size is not admissible, and let
\begin{equation*}
  \Lip(\phi_k;[0,\bar a_k]) := \sup_{0 \leq {a<b} \leq \bar a_k}
  \frac{|\phi_k(a)-\phi_k(b)|}{|a-b|}
\end{equation*}
be the Lipschitz constant of $\phi_k$ over the interval $[0, \bar a_k]$. Then the bisection in
  Subroutine~\ref{alg:second_order_ls} returns an accepted trial after at most
  \begin{equation}\label{eq:ls_bisection_bound}
    1+
    \left\lceil
      \log_2
      \left(
        \frac{\bar a_k \Lip(\phi_k;[0,\bar a_k])}{\sqrt{2}-1}
      \right)
    \right\rceil
  \end{equation}
  bisection trials.
\end{lemma}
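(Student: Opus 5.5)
The argument is a standard bisection count, driven by the two invariants $\phi_k(a^-)<1$ and $\phi_k(a^+)>\sqrt{2}$ maintained by Subroutine~\ref{alg:second_order_ls}. Write $\Lambda:=\Lip(\phi_k;[0,\bar a_k])$; we may assume $\Lambda<\infty$, since otherwise the bound is vacuous. The key observation is a gap lower bound on the bracketing interval. Whenever both invariants hold, the Lipschitz property gives
\[
\sqrt2-1 < \phi_k(a^+)-\phi_k(a^-) \le \Lambda\,(a^+-a^-),
\]
so every bracketing interval that persists has length strictly larger than $(\sqrt2-1)/\Lambda$.

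Next I would track the interval lengths. The initial interval $[0,\bar a_k]$ is valid because we are in the case $\phi_k(\bar a_k)>\sqrt2$ (the upper trial is not admissible), together with $\phi_k(0)=0<1$. Each unsuccessful trial replaces one endpoint by the midpoint. This preserves both invariants and halves the length. Hence, after $j$ unsuccessful trials, the current interval has length $\bar a_k 2^{-j}$ and still satisfies both invariants. The gap bound then forces
\[
\bar a_k 2^{-j} > \frac{\sqrt2-1}{\Lambda}, \qquad\text{that is,}\qquad j < \log_2\!\Bigl(\frac{\bar a_k\Lambda}{\sqrt2-1}\Bigr).
\]
So the number of consecutive unsuccessful trials is at most $\lceil \log_2(\bar a_k\Lambda/(\sqrt2-1))\rceil$. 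When the logarithm is nonpositive, the count is zero: no interval satisfies the invariants, so the very first midpoint must be accepted. The following trial must be accepted, because otherwise one more halving would produce a valid interval violating the gap bound. Adding this final accepted trial gives the total in \eqref{eq:ls_bisection_bound}.

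The only delicate point is the ceiling when the logarithm is an exact integer. The strict inequality $j<\log_2(\cdot)$ handles this case, so the ceiling bound remains valid. There is no real obstacle here; the substantive work of bounding $\Lambda$ and $\bar a_k$ presumably comes in later lemmas.
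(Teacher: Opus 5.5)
Your proposal is correct and follows the paper's proof essentially verbatim: the invariants $\phi_k(a^-)<1<\sqrt2<\phi_k(a^+)$, the halving of the bracket to length $\bar a_k/2^t$, and the Lipschitz bound together force $t<\log_2\bigl(\bar a_k\Lip(\phi_k;[0,\bar a_k])/(\sqrt2-1)\bigr)$, so the next trial must be accepted. One small remark: your case of a nonpositive logarithm cannot actually occur, because applying the same gap bound to the initial bracket $[0,\bar a_k]$ already forces $\bar a_k\Lip(\phi_k;[0,\bar a_k])>\sqrt2-1$.
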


In the next section, we will provide an upper bound on the Lipschitz constant
$\Lip(\phi_k;[0,\bar a_k])$ and thus a concrete upper bound on the line-search
complexity.

\subsection{Complexity Analysis}\label{subsec:second_order_complexity}

We now state the outer iteration and line-search complexities of
Algorithm~\ref{alg:aep_newton}. Fix a solution $\vz^*$. For clarity, an outer iteration
refers to one accepted AEP Newton update, whereas a trial evaluation refers
to the computation of the three trial iterates in
Subroutine~\ref{alg:second_order_ls}, which involves \eqref{eq:AEP_k+1/2}, \eqref{eq:trial_subproblem}, and \eqref{eq:trial_v}.

Unless the upper trial already returns an $\varepsilon$-accurate point, every
accepted update satisfies the lower bound in~\eqref{eq:ls_step_interval}.
Combining this bound with Lemma~\ref{lem:stability} shows that
$A_T=\Omega(T^{5/2}/(L_2\|\vz_0-\vz^*\|))$. The residual estimate in
Theorem~\ref{thm:aep} then gives
$\res^{\mathrm{tan}}(\vz_T)=\bigO(L_2\|\vz_0-\vz^*\|^2/T^{5/2})$,
which yields the following theorem. The full proof is given in
Appendix~\ref{appen:second_order_complexity}.
\begin{theorem}[Outer iteration complexity]
  \label{thm:second_order_complexity}
  Algorithm~\ref{alg:aep_newton} returns a point $\widehat\vz$ satisfying
  $\res^{\mathrm{tan}}(\widehat\vz)\leq\varepsilon$ after at most
  \begin{equation}\label{eq:outer_second_order_complexity}
    T
    :=
    \left\lceil
      \left(
        \frac{16L_2\|\vz_0-\vz^*\|^2}{\varepsilon}
      \right)^{2/5}
    \right\rceil
  \end{equation}
  outer iterations.
\end{theorem}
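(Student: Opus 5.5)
We argue by contradiction: suppose Algorithm~\ref{alg:aep_newton} has not returned after $T$ outer iterations. Then no upper trial in Subroutine~\ref{alg:second_order_ls} was admissible during iterations $0,\dots,T-1$, since otherwise Lemma~\ref{lem:ls_residual} would have produced an $\varepsilon$-accurate point and terminated the method. Hence every iteration $i\le T-1$ ended with an accepted bisection trial. By \eqref{eq:ls_acceptance_window}, each accepted trial satisfies the relative-error condition with $\rho=1/\sqrt2$, so the iterates form a valid AEP run. Consequently Theorem~\ref{thm:aep} and Lemma~\ref{lem:stability} both apply, the latter with constant $1/(1-\rho^2)=2$. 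Each accepted trial also satisfies the lower bound in \eqref{eq:ls_step_interval}, namely $\|\vz_{i+1}-\vz_{i+\frac12}\|\ge 1/(L_2 a_i)$.

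Next we run the growth argument sketched before the line-search subsection, now with explicit constants. Substituting the lower bound into Lemma~\ref{lem:stability} gives
\begin{equation*}
\sum_{i=0}^{T-1} \frac{A_{i+1}^2}{a_i^4}\le 2L_2^2 D^2, \qquad D:=\|\vz_0-\vz^*\|.
\end{equation*}
Rather than invoking the sequence-growth lemma of \cite{bubeck2019near}, whose constant is loose, we bound $A_T$ directly, because the target constant $16$ suggests a short self-contained argument. Since $A_{i+1}\le A_T$ trivially, the useful inequality runs the other way. For each $i$ we have $A_{i+1}\ge a_i$, hence $A_{i+1}^2/a_i^4\ge 1/a_i^2$. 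This alone gives $\sum_i a_i^{-2}\le 2L_2^2D^2$, and by H\"older (or the power-mean inequality) $A_T\ge T^{3/2}/(\sqrt2 L_2 D)$, which yields only the rate $T^{-3/2}$.

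To reach $T^{5/2}$ we apply H\"older as in the text:
\begin{equation*}
A_T^{4/5}\,(2L_2^2D^2)^{1/5}\ \ge\ \sum_{i=0}^{T-1} A_{i+1}^{2/5}.
\end{equation*}
We then lower bound the right-hand side using the monotonicity of $A$. Restricting the sum to $i\ge \lceil T/2\rceil-1$ gives $\sum_i A_{i+1}^{2/5}\ge (T/2)A_{\lceil T/2\rceil}^{2/5}$. This produces a recursion of the form $A_T^{4/5}\ge c\,T\,A_{T/2}^{2/5}$, which we iterate or prove by induction with the ansatz $A_t\ge \kappa\, t^{5/2}/(L_2 D)$. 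The main obstacle is closing this induction with a constant good enough to give $16$. If the halving argument is too lossy, we fall back on the integral comparison argument of \cite[Lemma~12]{bubeck2019near}, which yields $A_T\ge T^{5/2}/(c L_2 D)$ for an explicit $c$. We would then check whether $2\cdot c\le 16$, and if it is not, accept a slightly different absolute constant as the honest outcome.

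Finally, Theorem~\ref{thm:aep} gives
\begin{equation*}
\|\oF(\vz_T)+\vv_T\|\le \frac{2D}{A_T}\le \frac{2c L_2 D^2}{T^{5/2}}\le\varepsilon
\end{equation*}
by the choice of $T$. The termination test at iteration $T$ therefore returns $\vz_T$, and $\res^{\mathrm{tan}}(\vz_T)\le\|\oF(\vz_T)+\vv_T\|$ by definition, since $\vv_T\in\oH(\vz_T)$. This contradicts the assumption, or equivalently shows that the method terminates within $T$ outer iterations. In the edge case where the method terminates early via an upper trial, Lemma~\ref{lem:ls_residual} already guarantees that the returned point is $\varepsilon$-accurate.
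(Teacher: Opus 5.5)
Your skeleton (contradiction setup, the accepted steps forming an AEP run with $\rho=1/\sqrt2$, the bound $\sum_i A_{i+1}^2/a_i^4\le 2L_2^2D^2$ from Lemma~\ref{lem:stability} and $\phi_k\ge 1$, H\"older, then Theorem~\ref{thm:aep}) is exactly the paper's. The gap is that you never prove a growth estimate whose constant yields the stated $T$. Your argument ends with ``check whether $2c\le 16$, and if not accept a different constant,'' and that does not establish the theorem as stated.

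Moreover, the halving route you prefer cannot deliver $16$. Take $D=\|\vz_0-\vz^*\|$, $B_t:=A_t^{2/5}$ and $c_0:=(2L_2^2D^2)^{-1/5}$. The recursion $B_t^2\ge c_0\,(t/2)\,B_{\lceil t/2\rceil}$ with ansatz $B_t\ge\beta t$ closes only for $\beta\le c_0/4$. That gives $A_t\ge t^{5/2}/(32\sqrt2\,L_2D)$, and hence the constant $64\sqrt2\approx 90.5$ instead of $16$.

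The missing step is simply to carry out your fallback, which you wrongly describe as loose. The recursion $B_t^2\ge c_0\sum_{j=1}^t B_j$ holds for every $t\le T$, because Lemma~\ref{lem:stability} bounds every partial sum. Lemma~12 of \cite{bubeck2019near} with $\alpha=2$ then gives $B_t\ge c_0t/2$. To see this directly, induction gives $B_t^2\ge c_0B_t+c_0^2t(t-1)/4$, so $B_t\ge c_0\bigl(1+\sqrt{t^2-t+1}\bigr)/2\ge c_0t/2$. Therefore
\[
A_t\ge\Bigl(\frac{c_0t}{2}\Bigr)^{5/2}=\frac{t^{5/2}}{4\sqrt2\cdot\sqrt2\,L_2D}=\frac{t^{5/2}}{8L_2D}.
\]
For $T=\lceil(16L_2D^2/\varepsilon)^{2/5}\rceil$ this gives $A_T\ge 2D/\varepsilon$, and hence $\|\oF(\vz_T)+\vv_T\|\le\varepsilon$. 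The constant $16$ is exactly $2\cdot 8$, so there is no slack to concede; this is precisely the paper's computation. Your $T^{3/2}$ side remark is correct but unnecessary.
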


To turn the generic bisection bound~\eqref{eq:ls_bisection_bound} into a
concrete per-iteration bound, it remains to control two quantities: the
Lipschitz modulus of $\phi_k$ and the initial interval length $\bar a_k$.
The following lemma provides both estimates; its proof is deferred to
Appendix~\ref{appen:second_order_merit_regular}.
\begin{lemma}[Lipschitzness of the merit function]
  \label{lem:second_order_merit_regular}
  At every outer iteration before termination,
  \begin{equation}\label{eq:merit_lipschitz_bound}
    \Lip(\phi_k;[0,\bar a_k])
    \leq
    29L_2\|\vz_0-\vz^*\|
    \bigl(1+L_2\|\vz_0-\vz^*\|\bar a_k\bigr)^2.
  \end{equation}
  Moreover, whenever the bisection phase is executed,
  \begin{equation}\label{eq:ls_cap_upper_bound}
    \bar a_k
    \leq
    \max\left\{
      \frac{4\|\vz_0-\vz^*\|}{\varepsilon},
      \sqrt{\frac{2(1+\sqrt{2})}{L_2\varepsilon}}
    \right\}.
  \end{equation}
\end{lemma}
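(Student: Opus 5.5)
Both estimates follow from two facts established before the bisection phase starts: the bound $\|\oF(\vz_k)+\vv_k\|\leq 2\|\vz_0-\vz^*\|/A_k$ from Theorem~\ref{thm:aep}, and a uniform bound $\|\vz_k-\vz^*\|\leq\|\vz_0-\vz^*\|$. I would prove the latter from the same potential argument as Theorem~\ref{thm:aep}, or from Lemma~\ref{lem:stability}. It is standard for anchored schemes, since the anchored resolvent is nonexpansive toward $\vz^*$ and averages with $\vz_0$. Write $D:=\|\vz_0-\vz^*\|$.

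For \eqref{eq:ls_cap_upper_bound} I would bound the first term of $\bar a_k$. When $k\geq1$, $2A_k\|\oF(\vz_k)+\vv_k\|/\varepsilon\leq 4D/\varepsilon$ by Theorem~\ref{thm:aep}. When $k=0$, $A_0=0$ and the first term vanishes. The second term is unchanged.

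For \eqref{eq:merit_lipschitz_bound} I would view the trial map in three stages and bound each. First, $a\mapsto\vz_{k+\frac12}(a)$: write $\vz_{k+\frac12}(a)=\vz_k+\frac{a}{A_k+a}(\vz_0-\vz_k)-\frac{aA_k}{A_k+a}\vg_k$ with $\vg_k=\oF(\vz_k)+\vv_k$. Its derivative is bounded by $\frac{A_k}{(A_k+a)^2}\|\vz_0-\vz_k\|+\frac{A_k^2}{(A_k+a)^2}\|\vg_k\|$. This is at most $2D/A_k+2D/A_k$, which is a problem as $A_k$ can be small. I would instead use $\frac{A_k}{(A_k+a)^2}\leq\frac1{A_k+a}$ together with $A_k^2\|\vg_k\|/(A_k+a)^2\leq 2D/A_k$. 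If this still leaves a $1/A_k$ factor, I would keep $\vz_{k+\frac12}$ and $\vz_{k+1}$ together: the center $\vw(a)=\frac{a}{A_k+a}\vz_0+\frac{A_k}{A_k+a}\vz_k$ is shared and $\vz_{k+1}-\vz_{k+\frac12}$ only involves $aA_k\vg_k/(A_k+a)$, whose $a$-derivative is $A_k^2\|\vg_k\|/(A_k+a)^2\leq 2D$ with no $1/A_k$ factor. Second, $\vz_{k+1}(a)=\oJ_{a\oH}$-type solution of a strongly monotone affine inclusion: for two values $a,b$, subtract the inclusions, use monotonicity of $\oH$ and of $D\oF(\vz_{k+\frac12})$, and get $\|\vz_{k+1}(a)-\vz_{k+1}(b)\|\lesssim\|\vw(a)-\vw(b)\|+|a-b|\,\|\oP_{k,a}(\vz_{k+1})+\vv\|+b\|(\oP_{k,a}-\oP_{k,b})(\vz_{k+1}(b))\|$. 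The model difference is controlled by the Jacobian-Lipschitz bound $L_2$ times $\|\vz_{k+\frac12}(a)-\vz_{k+\frac12}(b)\|$ times a displacement. Third, $\phi_k(a)=L_2a\|\vz_{k+1}(a)-\vz_{k+\frac12}(a)\|$ is a product, so its Lipschitz constant is $L_2(\sup\|\vz_{k+1}-\vz_{k+\frac12}\|+\bar a_k\cdot\mathrm{Lip}(\text{displacement}))$.

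Each stage needs a priori bounds on the trial displacements. I would show $\|\vz_{k+1}(a)-\vz^*\|\lesssim D(1+L_2D a)$ and similar bounds for $\vz_{k+\frac12}(a)$ and the model residual. The method is to compare with the exact inclusion at $\vz^*$ (where $0\in\oF(\vz^*)+\oH(\vz^*)$), using monotonicity and $\|\oF(\vz^*)-\oP_{k,a}(\vz^*)\|\leq\frac{L_2}{2}\|\vz^*-\vz_{k+\frac12}\|^2$. Combining these, the factors $(1+L_2D\bar a_k)$ appear at most twice, giving the stated form. The constant $29$ comes from tallying the terms.

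The main obstacle is the second stage: obtaining a Lipschitz estimate for the solution of the model inclusion in $a$ that is uniform and free of $1/A_k$, while $\oH$ is only maximally monotone. I would handle it by the two-point subtraction argument above, never differentiating through $\oH$.
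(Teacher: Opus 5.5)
There is a genuine gap, and it sits where you yourself flagged trouble: the factors of $1/A_k$. In your stage~2, the term $\|\vw(a)-\vw(b)\|=|\alpha(a)-\alpha(b)|\,\|\vz_0-\vz_k\|$, with $\alpha(a)=a/(A_k+a)$, is only of order $|a-b|\,\|\vz_0-\vz_k\|/A_k$. This is sharp near $a=0$, since $\alpha'(0)=1/A_k$. The model-residual term is $|a-b|\,\|\oP_{k,a}(\vz_{k+1}(a))+\vv_{k+1}(a)\|=|a-b|\,\|\vw(a)-\vz_{k+1}(a)\|/a$, so it needs $\|\vw(a)-\vz_{k+1}(a)\|=O(a)$. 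But $\vw(a)-\vz_{k+1}(a)=\frac{aA_k}{A_k+a}\vg_k-\vd_k(a)$ involves $\|\vg_k\|$, and for that you only have the bound $2\|\vz_0-\vz^*\|/A_k$.

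Your proposed escape rests on a miscalculation. The $a$-derivative of $\frac{aA_k}{A_k+a}\vg_k$ is $\frac{A_k^2}{(A_k+a)^2}\vg_k$. Its norm equals $\|\vg_k\|$ at $a=0$ and is bounded only by $2\|\vz_0-\vz^*\|/A_k$, not by $2\|\vz_0-\vz^*\|$; the latter does not even have the units of a derivative in $a$. The other piece, $\vz_{k+1}(a)-\vw(a)$, still has to be compared at two step sizes, and that reintroduces $\|\vw(a)-\vw(b)\|$. Stage~3 multiplies $\Lip(\vd_k;[0,\bar a_k])$ by $\bar a_k$, so an uncontrolled $1/A_k$ breaks the claimed bound: nothing in your argument ties $1/A_k$ or $\|\vg_k\|$ to $L_2$. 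Separately, a priori bounds obtained by comparing with $\vz^*$ control $\|\vz_{k+1}(a)-\vz^*\|$, but they cannot give $\|\vw(a)-\vz_{k+1}(a)\|=O(a)$ as $a\downarrow0$.

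The missing ingredient is the hypothesis ``before termination'', used through the \emph{lower} acceptance condition of the previous step; your proposal never invokes it. For $k\geq1$:
\begin{enumerate}[(i)]
\item Step $k-1$ was accepted, so $1\leq L_2a_{k-1}\|\vz_k-\vz_{k-\frac12}\|$.
\item The single $(k-1)$th term of Lemma~\ref{lem:stability} with $\rho=1/\sqrt2$ gives $\|\vz_k-\vz_{k-\frac12}\|\leq\sqrt2\|\vz_0-\vz^*\|a_{k-1}/A_k$.
\item Combining these with $a_{k-1}\leq A_k$ yields $1/A_k\leq\sqrt2L_2\|\vz_0-\vz^*\|$, and hence $\|\vg_k\|\leq2\sqrt2L_2\|\vz_0-\vz^*\|^2$.
\end{enumerate}
Every $1/A_k$ in your argument then becomes a factor $L_2\|\vz_0-\vz^*\|$.

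You also still need the $O(a)$ bound above. The paper obtains it by comparing the trial inclusion with the graph point $\oP_{k,a}(\vz_k)+\vv_k\in(\oP_{k,a}+\oH)(\vz_k)$, rather than with $\vz^*$. This gives $\|\vd_k(a)\|\lesssim\|\vz_{k+\frac12}(a)-\vz_k\|+a\|\vg_k\|+L_2a\|\vz_{k+\frac12}(a)-\vz_k\|^2=O(L_2\|\vz_0-\vz^*\|^2a)$, which again relies on the bound on $1/A_k$.

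Three smaller points:
\begin{enumerate}[(i)]
\item The case $k=0$ must be handled separately. There $A_0=0$, so $\vz_{\frac12}(a)=\vw(a)=\vz_0$ and the model does not depend on $a$; a direct resolvent comparison suffices.
\item Your claim $\|\vz_k-\vz^*\|\leq\|\vz_0-\vz^*\|$ is not immediate for the inexact scheme, because $\vz_{k+1}=\oJ_{a_k(\oF+\oH)}(\vw(a_k)+\vr_{k+1})$ carries the error $\vr_{k+1}$. Bounding $\|\vr_{k+1}\|$ through the stability estimate gives a constant-factor version, which is enough. The paper proves $\|\vz_k-\vz_0\|\leq3\|\vz_0-\vz^*\|$ and $\|\vz_0-\vz_k-A_k\vg_k\|\leq3\|\vz_0-\vz^*\|$.
\item Your argument for the bound on $\bar a_k$ is correct and coincides with the paper's.
\end{enumerate}
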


Combining the estimates in Lemma~\ref{lem:second_order_merit_regular} with
Lemma~\ref{lem:ls_bisection_complexity} gives the following line-search
complexity.
\begin{theorem}[Line-search complexity]
  \label{thm:second_order_ls_complexity}
  At every outer iteration before termination,
  Subroutine~\ref{alg:second_order_ls} returns after
  \begin{equation}\label{eq:second_order_ls_complexity}
    \bigO\left(
      1+\log\left(
        1+\frac{L_2\|\vz_0-\vz^*\|^2}{\varepsilon}
      \right)
    \right)
  \end{equation}
  trial evaluations, including the initial trial at $a=\bar a_k$.
\end{theorem}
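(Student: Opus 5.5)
The proof combines Lemma~\ref{lem:ls_bisection_complexity} with the two estimates of Lemma~\ref{lem:second_order_merit_regular}. We only need to show that the argument of the logarithm in~\eqref{eq:ls_bisection_bound} is polynomial in $1+L_2\|\vz_0-\vz^*\|^2/\varepsilon$.

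\textbf{Step 1 (trivial case).} If the initial trial at $a=\bar a_k$ satisfies $\phi_k(\bar a_k)\leq\sqrt2$, the subroutine stops after one trial evaluation, and~\eqref{eq:second_order_ls_complexity} holds. Otherwise the upper trial is not admissible, so Lemma~\ref{lem:ls_bisection_complexity} applies. It gives at most one trial plus the bisection count~\eqref{eq:ls_bisection_bound}.

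\textbf{Step 2 (bounding the product).} Write $D:=\|\vz_0-\vz^*\|$ and $x:=\bar a_k L_2 D$. By~\eqref{eq:merit_lipschitz_bound},
\[
\bar a_k\,\Lip(\phi_k;[0,\bar a_k])\leq 29\,x(1+x)^2\leq 29(1+x)^3 .
\]
It therefore suffices to bound $x$ by a polynomial in $R:=L_2D^2/\varepsilon$. By~\eqref{eq:ls_cap_upper_bound}, $x$ is at most the larger of two terms.
\begin{itemize}
\item The first term is $4L_2D^2/\varepsilon=4R$.
\item The second term is
\[
L_2D\sqrt{\tfrac{2(1+\sqrt2)}{L_2\varepsilon}}=\sqrt{2(1+\sqrt2)}\,\sqrt{R}.
\]
\end{itemize}
Since $\sqrt R\leq 1+R$, we get $x\leq c(1+R)$ for an absolute constant $c$ (e.g.\ $c=4$). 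Hence $\bar a_k\Lip(\phi_k)\leq 29(1+c)^3(1+R)^3$.

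\textbf{Step 3 (conclusion).} Taking $\log_2$ and dividing by the constant $\sqrt2-1$, the bisection count is at most
\[
1+\lceil \log_2(C(1+R)^3)\rceil=\bigO\bigl(1+\log(1+R)\bigr).
\]
Adding the initial trial gives~\eqref{eq:second_order_ls_complexity}.

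One point needs care. Lemma~\ref{lem:second_order_merit_regular} is stated for iterations before termination, and its second bound only when the bisection phase executes; this is exactly the case covered in Step 2. The degenerate case $D=0$ is also harmless, since the bound on $x$ still holds. I expect no real obstacle, since the substantive work is already in Lemma~\ref{lem:second_order_merit_regular}. The only delicate point is tracking that the mixed $\sqrt{R}$ term is absorbed into $1+R$.
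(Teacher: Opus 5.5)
Your proposal is correct and follows the paper's own proof essentially step for step. You dispose of the accepted-upper-trial case, bound $\bar a_k\Lip(\phi_k;[0,\bar a_k])$ via Lemma~\ref{lem:second_order_merit_regular}, absorb the $\sqrt{R}$ term into $4(1+R)$ to get a cubic polynomial in $1+R$, and then apply Lemma~\ref{lem:ls_bisection_complexity}; the only differences from the paper are cosmetic constant-tracking.
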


Each trial requires one second-order oracle call at the intermediate point and
one solution of the affine-model subproblem~\eqref{eq:trial_subproblem}. An
outer iteration may additionally require evaluating $\oF(\vz_k)$ for the
stopping test and the cap~\eqref{eq:ls_cap}. Since every invocation of the
line search performs at least one trial, these additional operator evaluations
change the total oracle count by at most a universal constant factor. Hence,
as a direct corollary of Theorems~\ref{thm:second_order_complexity}
and~\ref{thm:second_order_ls_complexity}, Algorithm~\ref{alg:aep_newton}
returns $\widehat\vz$ with $\res^{\mathrm{tan}}(\widehat\vz)\leq\varepsilon$
using $\widetilde{\bigO}\left(
      \left(
        \frac{L_2\|\vz_0-\vz^*\|^2}{\varepsilon}
      \right)^{2/5}
\right)$ second-order oracle calls. 

In terms of the tangent residual, this
improves the classical pointwise $\bigO(\varepsilon^{-1})$ guarantee for
NPE-type methods for monotone inclusions~\cite{monteiro2010complexity,monteiro2012iteration}
and the recent $\widetilde{\bigO}(\varepsilon^{-1/2})$ bound for smooth
monotone variational inequalities obtained through an inexact Halpern
scheme~\cite{chen2026halpern}. As discussed in
Section~\ref{subsec:optimality}, the tangent residual also controls standard
restricted gaps on bounded comparison sets. Through this relationship, our
result improves the dependence on $\varepsilon$ in the classical ergodic
$\bigO(\varepsilon^{-2/3})$ NPE guarantee and in the
$\widetilde{\bigO}(\varepsilon^{-4/7})$ second-order bounds for
convex-concave min-max problems~\cite{chen25solving,chen2026solving}, while
applying to the more general composite inclusion~\eqref{eq:monotone}.
Finally, our upper bound matches the deterministic second-order oracle lower
bound proved in Section~\ref{sec:lower_bound}, up to the logarithmic cost of
the line search.

\section{Anchored Extra-Proximal Tensor Methods}
\label{sec:AEP_tensor}

We now extend the construction in Section~\ref{sec:AEP_Newton} to a
$p$th-order oracle, where $p\geq2$ and the oracle returns
$\oF,D\oF,\ldots,D^{p-1}\oF$.  The resulting method is called Anchored
Extra-Proximal (AEP) Tensor method.  We first make the standard smoothness assumption below.

\begin{assumption}\label{assum:Lips_higher_derivative}
  The $(p-1)$th derivative of $\oF$ is $L_p$-Lipschitz continuous:
  \[
    \|D^{p-1}\oF(\vz)-D^{p-1}\oF(\vz')\|_{\op}
    \leq L_p\|\vz-\vz'\|,
    \qquad \vz,\vz'\in\reals^d,
  \]
  where $\|\cdot\|_{\op}$ denotes the induced multilinear operator norm.
\end{assumption}

For $q\leq p-1$, define the Taylor approximation
\begin{equation*}
  \oT^{(q)}(\vz';\vz)
  :=
  \oF(\vz)+\sum_{i=1}^{q}\frac{1}{i!}
  D^i\oF(\vz)[\vz'-\vz]^i.
\end{equation*}
Assumption~\ref{assum:Lips_higher_derivative} gives the standard Taylor remainder bound
\begin{equation}\label{eq:pth_unregularized_taylor_remainder}
  \|\oF(\vz')-\oT^{(p-1)}(\vz';\vz)\|
  \leq
  \frac{L_p}{p!}\|\vz'-\vz\|^p.
\end{equation}
Unlike its affine counterpart, however, the higher-order Taylor model need
not be monotone.  Thus, using it directly in the surrogate
inclusion~\eqref{eq:surrogate_inclusion} need not produce a well-posed
subproblem. To address this issue,  we use the following regularization to restore monotonicity.

\begin{lemma}[{\cite[Lemma 7.1]{jiang2024generalized}}]
  \label{lem:regularized_taylor}
  Define
  \begin{equation*}
    \oT_{\lambda}^{(p-1)}(\vz';\vz)
    :=
    \oT^{(p-1)}(\vz';\vz)
    +\frac{\lambda}{(p-1)!}
    \|\vz'-\vz\|^{p-1}(\vz'-\vz).
  \end{equation*}
  For every $\lambda\geq0$,
  \begin{equation}\label{eq:pth_taylor_remainder}
    \|\oF(\vz')-\oT_{\lambda}^{(p-1)}(\vz';\vz)\|
    \leq
    \frac{L_p+p\lambda}{p!}\|\vz'-\vz\|^p.
  \end{equation}
  Moreover, if $\lambda\geq L_p$, then
  $\oT_{\lambda}^{(p-1)}(\cdot;\vz)$ is maximal monotone and has
  domain $\reals^d$.
\end{lemma}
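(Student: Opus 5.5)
The lemma has two parts: the remainder bound \eqref{eq:pth_taylor_remainder} and the maximal monotonicity of the regularized model. I would prove them separately, and the remainder bound is immediate. By the triangle inequality and \eqref{eq:pth_unregularized_taylor_remainder},
\[
\|\oF(\vz')-\oT_\lambda^{(p-1)}(\vz';\vz)\|\le \|\oF(\vz')-\oT^{(p-1)}(\vz';\vz)\|+\tfrac{\lambda}{(p-1)!}\|\vz'-\vz\|^{p}\le \tfrac{L_p}{p!}\|\vz'-\vz\|^p+\tfrac{p\lambda}{p!}\|\vz'-\vz\|^p .
\]

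For monotonicity, I would fix $\vz$, write $\vh=\vz'-\vz$, and set $G(\vh):=\oT_\lambda^{(p-1)}(\vz+\vh;\vz)$. Since $G$ is a polynomial map plus a $C^1$ regularizer, it is continuous and differentiable. It therefore suffices to show that its Jacobian is positive semidefinite everywhere: $\langle \vu, DG(\vh)\vu\rangle\ge0$ for all $\vu,\vh$. Integrating along segments then gives monotonicity. A continuous, single-valued, monotone operator defined on all of $\reals^d$ is maximal monotone, which gives the domain statement at the same time.

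The Jacobian of the Taylor part is $D\oT^{(p-1)}(\vz+\vh;\vz)=\sum_{i=0}^{p-2}\frac{1}{i!}D^{i+1}\oF(\vz)[\vh]^i$. This is exactly the $(p-2)$th-order Taylor expansion of $D\oF$ around $\vz$, evaluated at $\vz+\vh$. By Assumption~\ref{assum:Lips_higher_derivative}, the standard remainder bound applied to $D\oF$ gives
\[
\bigl\|D\oF(\vz+\vh)-D\oT^{(p-1)}(\vz+\vh;\vz)\bigr\|_{\op}\le \tfrac{L_p}{(p-1)!}\|\vh\|^{p-1}.
\]
Since $\oF$ is monotone, $D\oF(\vz+\vh)$ is positive semidefinite. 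Hence
\[
\langle\vu,D\oT^{(p-1)}\vu\rangle\ge-\tfrac{L_p}{(p-1)!}\|\vh\|^{p-1}\|\vu\|^2 .
\]

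Next I would differentiate the regularizer $R(\vh)=\frac{\lambda}{(p-1)!}\|\vh\|^{p-1}\vh$. This gives
\[
DR(\vh)=\frac{\lambda}{(p-1)!}\Bigl(\|\vh\|^{p-1}\mI+(p-1)\|\vh\|^{p-3}\vh\vh^\top\Bigr)\succeq \frac{\lambda}{(p-1)!}\|\vh\|^{p-1}\mI ,
\]
valid for $\vh\neq0$. At $\vh=0$ the Jacobian of $R$ is $0$ when $p\ge3$, and equals $\lambda\mI$ when $p=2$, so the bound holds there too. Adding the two estimates gives
\[
\langle\vu,DG(\vh)\vu\rangle\ge\frac{\lambda-L_p}{(p-1)!}\|\vh\|^{p-1}\|\vu\|^2\ge0 \quad\text{whenever } \lambda\ge L_p .
\]

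The main obstacle is the Jacobian remainder step: one must verify carefully that differentiating the $(p-1)$th Taylor model of $\oF$ gives the $(p-2)$th Taylor model of $D\oF$, including the symmetric-tensor factors. The leftover $\|\vh\|\vh\vh^\top$-type terms need no control beyond being positive semidefinite. If differentiability at $\vh=0$ were a concern, I would instead argue monotonicity directly along segments avoiding the origin and conclude by continuity.
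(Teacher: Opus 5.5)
Your proof is correct. The paper does not prove this lemma itself but imports it from \cite{jiang2024generalized}, and your argument is the standard one for this fact: the triangle inequality for the remainder bound; for monotonicity, a positive semidefinite Jacobian obtained by combining the $(p-2)$th-order Taylor remainder of $D\oF$ with the lower bound $\frac{\lambda}{(p-1)!}\|\bm{h}\|^{p-1}\mI$ on the regularizer's Jacobian; then ``continuous, monotone, full domain $\Rightarrow$ maximal.'' One harmless slip: for $p=2$ the Jacobian of $\bm{h}\mapsto\lambda\|\bm{h}\|\bm{h}$ at $\bm{h}=0$ is $0$, not $\lambda\mI$, but the required lower bound at $\bm{h}=0$ is $0$ for every $p\geq2$, so your conclusion is unaffected.
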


We henceforth set $\lambda=L_p$ and abbreviate
\begin{equation}\label{eq:def_Mp}
  M_p:=L_p+p\lambda=(p+1)L_p.
\end{equation}
At iteration $k$, we use
$\oP_k(\cdot)=\oT_{\lambda}^{(p-1)}
(\cdot;\vz_{k+\frac{1}{2}})$ in AEP.  The surrogate subproblem in \eqref{eq:surrogate_inclusion} then becomes
\begin{equation}\label{eq:AEP_tensor_subproblem}
  0 \in
  a_k\oT_{\lambda}^{(p-1)}
  (\vz_{k+1};\vz_{k+\frac{1}{2}})
  +a_k\oH(\vz_{k+1})+\vz_{k+1}
  -\frac{a_k}{A_k+a_k}\vz_0
  -\frac{A_k}{A_k+a_k}\vz_k.
\end{equation}
Since $\oT_{\lambda}^{(p-1)}(\cdot;\vz_{k+\frac{1}{2}})$ is maximal monotone by Lemma~\ref{lem:regularized_taylor}, due to the identity term, Problem~\eqref{eq:AEP_tensor_subproblem} is a strongly monotone inclusion problem and thus has a unique
solution.  The AEP relative-error condition in~\eqref{eq:surrogate_error_condition} becomes
\begin{equation}\label{eq:AEP_tensor_error_condition}
  a_k
  \left\|
    \oF(\vz_{k+1})
    -\oT_{\lambda}^{(p-1)}
      (\vz_{k+1};\vz_{k+\frac{1}{2}})
  \right\|
  \leq
  \rho\|\vz_{k+1}-\vz_{k+\frac{1}{2}}\|.
\end{equation}

As in the discussion in Section~\ref{sec:AEP_Newton}, the remainder estimate~\eqref{eq:pth_taylor_remainder} shows that
\eqref{eq:AEP_tensor_error_condition} is guaranteed whenever
\begin{equation}\label{eq:aep_tensor_stepsize}
  a_k
  \leq
  \frac{p!\rho}
  {M_p\|\vz_{k+1}-\vz_{k+\frac{1}{2}}\|^{p-1}},
\end{equation}
provided that $\vz_{k+1}\neq\vz_{k+\frac{1}{2}}$.  As in the second-order
case, this prescription is implicit because both iterates in its right-hand
side depend on $a_k$.  Nevertheless, it reveals the acceleration mechanism:
the higher-order model allows the step size to grow as $\|\vz_{k+1}-\vz_{k+\frac{1}{2}}\|^{-(p-1)}$.

\begin{algorithm}[!t]\small
  \caption{Anchored Extra-Proximal Tensor Method}
  \label{alg:aep_tensor}
  \begin{algorithmic}[1]
    \STATE \textbf{Input:} order $p\geq2$, initial point
    $\vz_0\in\dom(\oH)$, target accuracy $\varepsilon>0$, and $L_p>0$
    \STATE \textbf{Initialize:} choose $\vv_0\in\oH(\vz_0)$, set
    $A_0=0$, $\lambda=L_p$, and $M_p=(p+1)L_p$
    \FOR{$k=0,1,\ldots$}
      \IF{$\|\oF(\vz_k)+\vv_k\|\leq\varepsilon$}
        \STATE Return $\vz_k$
      \ENDIF
      \STATE Apply the tensor bisection line search in
      Section~\ref{subsec:tensor_ls} to select a step size $a_k$
      \STATE Compute
      $\displaystyle
        \vz_{k+\frac{1}{2}}
        =
        \frac{a_k}{A_k+a_k}\vz_0
        +\frac{A_k}{A_k+a_k}
        \bigl(\vz_k-a_k(\oF(\vz_k)+\vv_k)\bigr)$
      \STATE Find $\vz_{k+1}$ such that
      \begin{equation*}
        0\in
        a_k\oT_{\lambda}^{(p-1)}
        (\vz_{k+1};\vz_{k+\frac{1}{2}})
        +a_k\oH(\vz_{k+1})
        +\vz_{k+1}
        -\frac{a_k}{A_k+a_k}\vz_0
        -\frac{A_k}{A_k+a_k}\vz_k
      \end{equation*}
      \STATE Update
      \begin{equation*}
        A_{k+1}=A_k+a_k,
        \qquad
        \vv_{k+1}
        =
        \frac{1}{a_k}
        \left(
          \frac{a_k}{A_k+a_k}\vz_0
          +\frac{A_k}{A_k+a_k}\vz_k
          -\vz_{k+1}
        \right)
        -\oT_{\lambda}^{(p-1)}
        (\vz_{k+1};\vz_{k+\frac{1}{2}})
      \end{equation*}
    \ENDFOR
  \end{algorithmic}
\end{algorithm}
To make this intuition quantitative, fix $\rho\in(0,1)$ and suppose that
each step size can be chosen within a constant factor of the largest value
certified by~\eqref{eq:aep_tensor_stepsize}; specifically, suppose that
\begin{equation}\label{eq:stepsize_lb_pth}
  a_i
  \geq
  \frac{p!\rho}
  {\sqrt{2}M_p
   \|\vz_{i+1}-\vz_{i+\frac{1}{2}}\|^{p-1}},
  \qquad i\geq0.
\end{equation}
Together with~\eqref{eq:aep_tensor_stepsize}, this places $a_i$ within a
factor of $\sqrt{2}$ of the upper bound. The tensor line search described
below enforces precisely such a condition unless it has already found an
$\varepsilon$-accurate point.

Indeed,~\eqref{eq:stepsize_lb_pth} implies
\[
  \|\vz_{i+1}-\vz_{i+\frac{1}{2}}\|
  \geq
  \left(
    \frac{p!\rho}{\sqrt{2}M_pa_i}
  \right)^{1/(p-1)}.
\]
Substituting this lower bound into Lemma~\ref{lem:stability} yields
\[
  \sum_{i=0}^{k}
  \frac{A_{i+1}^2}{a_i^{2p/(p-1)}}
  \leq
  \frac{1}{1-\rho^2}
  \left(
    \frac{\sqrt{2}M_p}{p!\rho}
  \right)^{2/(p-1)}
  \|\vz_0-\vz^*\|^2.
\]
Setting $\rho=1/\sqrt{2}$, we obtain
\begin{equation}\label{eq:tensor_stability_sum}
  \sum_{i=0}^{k}
  \frac{A_{i+1}^2}{a_i^{2p/(p-1)}}
  \leq
  2\left(\frac{2M_p}{p!}\right)^{2/(p-1)}
  \|\vz_0-\vz^*\|^2.
\end{equation}
On the other hand, H\"older's inequality gives
\[
  A_{k+1}^{2p/(3p-1)}
  \left(
    \sum_{i=0}^{k}
    \frac{A_{i+1}^2}{a_i^{2p/(p-1)}}
  \right)^{(p-1)/(3p-1)}
  \geq
  \sum_{i=0}^{k}
  A_{i+1}^{2(p-1)/(3p-1)},
\]
where we used $A_{k+1}=\sum_{i=0}^k a_i$. Combining this inequality
with~\eqref{eq:tensor_stability_sum} yields
\begin{equation}\label{eq:tensor_growth_recursion}
  A_{k+1}^{2p/(3p-1)}
  \geq
  c
  \sum_{i=0}^{k}A_{i+1}^{2(p-1)/(3p-1)},
  \qquad
  c:=
  \left[
    2\left(\frac{2M_p}{p!}\right)^{2/(p-1)}
    \|\vz_0-\vz^*\|^2
  \right]^{-(p-1)/(3p-1)}.
\end{equation}
Then the sequence-growth lemma of~\cite[Lemma~12]{bubeck2019near} can be applied to show that 
$A_{k+1} = \Omega((k+1)^{(3p-1)/2})$, which, together with Theorem~\ref{thm:aep}, implies the $\bigO(k^{-(3p-1)/2})$ rate established formally in Theorem~\ref{thm:higher_order_complexity}.

It remains to realize the two-sided step-size condition
\eqref{eq:aep_tensor_stepsize}--\eqref{eq:stepsize_lb_pth}.  We use the same bisection line search scheme as in Section~\ref{subsec:ls}.  For a trial step
$a>0$, let $\vz_{k+\frac{1}{2}}(a)$ and $\vz_{k+1}(a)$ denote the
corresponding intermediate point and the solution of the regularized
Taylor-model subproblem, and define
\begin{equation}\label{eq:tensor_merit}
  \psi_k(a)
  :=
  M_pa\|\vz_{k+1}(a)-\vz_{k+\frac{1}{2}}(a)\|^{p-1},
  \qquad \psi_k(0):=0.
\end{equation}
The bisection accepts a trial whenever
\begin{equation}\label{eq:tensor_acceptance_window}
  \frac{p!}{2}
  \leq \psi_k(a)\leq
  \frac{p!}{\sqrt{2}}.
\end{equation}
The upper bound guarantees the AEP error condition with
$\rho=1/\sqrt{2}$, while the lower bound is exactly
\eqref{eq:stepsize_lb_pth}.  As in the Newton method, the search begins from
a computable upper trial value $\bar a_k$.  If the upper trial value satisfies the upper bound in \eqref{eq:tensor_acceptance_window}, its associated iterate
is already $\varepsilon$-accurate; otherwise, bisection between 0 and $\bar a_k$ finds a step size satisfying~\eqref{eq:tensor_acceptance_window}.  The complete
line-search construction and its analysis are given in
Appendix~\ref{appen:tensor_complexity}.

We now state the resulting complexity guarantees.  Here and below, constants
hidden by $\bigO_p$ may depend on the fixed order $p$, but not on
$L_p$, $\varepsilon$, or the initial distance.

\begin{theorem}[Outer iteration complexity]
  \label{thm:higher_order_complexity}
  Under Assumption~\ref{assum:Lips_higher_derivative},
  Algorithm~\ref{alg:aep_tensor} returns a point $\widehat\vz$ satisfying
  $\res^{\mathrm{tan}}(\widehat\vz)\leq\varepsilon$ after at most
  \begin{equation}\label{eq:outer_higher_order_complexity}
    \bigO_p\left(
      1+
      \left(
        \frac{L_p\|\vz_0-\vz^*\|^p}{\varepsilon}
      \right)^{2/(3p-1)}
    \right)
  \end{equation}
  outer iterations.
\end{theorem}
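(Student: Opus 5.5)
The plan follows the proof of Theorem~\ref{thm:second_order_complexity}. If the method has not already terminated, through the stopping test or through the upper-trial exit of the line search, then every accepted step satisfies the window~\eqref{eq:tensor_acceptance_window}. We will also rely on one fact from the tensor line search, whose construction appears in Appendix~\ref{appen:tensor_complexity}: if the upper trial at $\bar a_k$ satisfies $\psi_k(\bar a_k)\le p!/\sqrt2$, the returned point is $\varepsilon$-accurate. This is the analogue of Lemma~\ref{lem:ls_residual}, and we assume it. Such an exit only shortens the run, so it suffices to bound how many accepted iterations can occur before $\|\oF(\vz_T)+\vv_T\|\le\varepsilon$.

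\textbf{Step 1: step-size lower bound.} For accepted steps, the inequality $\psi_k(a_k)\le p!/\sqrt2$ gives~\eqref{eq:AEP_tensor_error_condition} with $\rho=1/\sqrt2$, via Lemma~\ref{lem:regularized_taylor}. Hence the iterates form a valid AEP run, and both Theorem~\ref{thm:aep} and Lemma~\ref{lem:stability} apply. The inequality $\psi_k(a_k)\ge p!/2$ is exactly~\eqref{eq:stepsize_lb_pth}. As derived above, substituting it into Lemma~\ref{lem:stability} and applying H\"older's inequality gives~\eqref{eq:tensor_growth_recursion}:
\[
A_{k+1}^{2p/(3p-1)}\ge c\sum_{i=0}^k A_{i+1}^{2(p-1)/(3p-1)}.
\]

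\textbf{Step 2: growth of $A_k$.} Next we make the sequence-growth argument explicit rather than citing it loosely. Set $\beta=2(p-1)/(3p-1)$ and $\gamma=2p/(3p-1)$, so that $\gamma-\beta=2/(3p-1)$. Define $S_k:=\sum_{i=0}^{k}A_{i+1}^{\beta}$. Then $A_{k+1}\ge (cS_k)^{1/\gamma}$, and therefore
\[
S_k-S_{k-1}=A_{k+1}^{\beta}\ge (cS_k)^{\beta/\gamma}.
\]
This is the discrete form of $S'\ge c^{\beta/\gamma}S^{\beta/\gamma}$. Since $\beta/\gamma<1$, a standard concavity argument gives
\[
S_k^{1-\beta/\gamma}-S_{k-1}^{1-\beta/\gamma}\ge (1-\beta/\gamma)\,c^{\beta/\gamma}.
\]
Concretely, $x^{1-\theta}$ is concave, so $S_k^{1-\theta}-S_{k-1}^{1-\theta}\ge(1-\theta)S_k^{-\theta}(S_k-S_{k-1})$ with $\theta=\beta/\gamma$. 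Summing yields $S_k\gtrsim_p c^{\beta/(\gamma-\beta)}(k+1)^{\gamma/(\gamma-\beta)}$, and substituting back into $A_{k+1}\ge(cS_k)^{1/\gamma}$ gives
\[
A_{k+1}\ge C_p\,c^{1/(\gamma-\beta)}(k+1)^{(3p-1)/2}.
\]
The exponent is correct because $1/(\gamma-\beta)=(3p-1)/2$. The constant is also correct: $c^{(3p-1)/2}=\bigl[2(2M_p/p!)^{2/(p-1)}\|\vz_0-\vz^*\|^2\bigr]^{-(p-1)/2}$, which is of order $(M_p\|\vz_0-\vz^*\|^{p-1})^{-1}$ up to $p$-dependent factors. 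Hence
\[
A_{k+1}\ge C_p'\,\frac{(k+1)^{(3p-1)/2}}{L_p\|\vz_0-\vz^*\|^{p-1}},
\]
using $M_p=(p+1)L_p$.

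\textbf{Step 3: conclusion.} Theorem~\ref{thm:aep} gives
\[
\|\oF(\vz_{T})+\vv_{T}\|\le \frac{2\|\vz_0-\vz^*\|}{A_T}\le \frac{2L_p\|\vz_0-\vz^*\|^p}{C_p' T^{(3p-1)/2}},
\]
which is at most $\varepsilon$ once $T\ge (2L_p\|\vz_0-\vz^*\|^p/(C_p'\varepsilon))^{2/(3p-1)}$. The stopping test at iteration $T$ then fires, so at most $\lceil\cdot\rceil\le 1+(\cdot)$ outer iterations occur, which matches~\eqref{eq:outer_higher_order_complexity}.

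The main obstacle is a bookkeeping point: Steps 1--3 use only iterations whose step was accepted inside the window. An early exit through the upper trial produces a point that Theorem~\ref{thm:aep} does not cover, so that case must be handled separately by the upper-trial lemma. The other delicate point is the base case of the growth recursion in Step 2, at $k=0$: there $A_1=a_0$ and $A_1^{\gamma}\ge cA_1^{\beta}$ already gives $A_1\ge c^{1/(\gamma-\beta)}$. We need to check that the concavity induction starts correctly from this bound.
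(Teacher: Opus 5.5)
Your proposal is correct and follows the paper's proof essentially step for step: the upper-trial exit is handled by the tensor analogue of Lemma~\ref{lem:ls_residual}, and the acceptance window yields both the AEP error condition with $\rho=1/\sqrt2$ and the step-size lower bound. Lemma~\ref{lem:stability} plus H\"older then gives \eqref{eq:tensor_growth_recursion}, and Theorem~\ref{thm:aep} finishes the argument. The only difference is that you prove the growth estimate inline using concavity of $x\mapsto x^{1/p}$ rather than citing \cite[Lemma~12]{bubeck2019near}. That argument is sound, including the $k=0$ base case with $S_{-1}=0$, and it recovers exactly the paper's bound $A_{k+1}\ge (c(k+1)/p)^{(3p-1)/2}$.
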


\begin{theorem}[Line-search complexity]
  \label{thm:tensor_ls_complexity}
  At every outer iteration before termination, the tensor line search returns
  after
  \begin{equation}\label{eq:tensor_ls_complexity}
    \bigO_p\left(
      1+\log\left(
        1+\frac{L_p\|\vz_0-\vz^*\|^p}{\varepsilon}
      \right)
    \right)
  \end{equation}
  trial evaluations, including the initial capped trial.
\end{theorem}

Each trial uses one $p$th-order oracle call and solves one regularized
Taylor-model inclusion.  Theorems~\ref{thm:higher_order_complexity}
and~\ref{thm:tensor_ls_complexity} therefore give the total oracle complexity
\begin{equation}\label{eq:tensor_oracle_complexity}
  \widetilde{\bigO}_p\left(
    \left(
      \frac{L_p\|\vz_0-\vz^*\|^p}{\varepsilon}
    \right)^{2/(3p-1)}
  \right).
\end{equation}
This improves with the oracle order $p$ and, up to the logarithmic cost of
the line search, matches the lower bound in
Theorem~\ref{thm:higher_order_lower_bound}.  In particular, the matching
lower bound holds for every deterministic $p$th-order method, rather than
only for methods constrained to take a prescribed tensor step.

\section{Lower Bounds}\label{sec:lower_bound}

In this section, we show that the exponent in \cref{thm:second_order_complexity} cannot be improved by using a different deterministic second-order method, even in the unconstrained setting where $\oH=0$.
In fact, we prove a more general lower bound for every order $p\geq2$.

For $p\geq2$, a deterministic $p$th-order method chooses its $t$th query $\vz_t$ as an arbitrary deterministic function of $\vz_0$ and the previous oracle outputs $\{D^j\oF(\vz_s):0\leq j\leq p-1,\ 0\leq s<t\}$, where $D^0\oF=\oF$.  
In particular, a second-order method has access to operator values and Jacobians, consistently with Assumption~\ref{assum:Lips_Jacobian}.

\subsection{The lower bound result and its proof}

\begin{theorem} %
\label{thm:higher_order_lower_bound}
Fix an integer $p\geq2$.  There is a constant $C_p>0$, depending only
on $p$, such that the following holds.  Let $N\geq1$, $L_p>0$,
$D>0$, and $n\geq2N+1$, and fix a starting point
$\vz_0\in\mathbb{R}^n$.
For every deterministic $p$th-order method initialized at $\vz_0$,
there exists a $\mathcal{C}^{p-1}$ monotone operator
$\oF:\mathbb{R}^n\to\mathbb{R}^n$ with $p$th-order Lipschitz
constant at most $L_p$ and a unique zero $\vz^*$ satisfying
$\norm{\vz_0-\vz^*}=D$, for which the output after $N$ oracle queries
satisfies
\begin{equation}
\label{eq:higher_order_lower_bound}
    \res^{\mathrm{tan}}(\vz_N)
    =\|\oF(\vz_N)\|
    \geq C_p\frac{L_pD^p}{(N+1)^{(3p-1)/2}}.
\end{equation}
\end{theorem}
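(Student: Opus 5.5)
The plan is to transfer the higher-order fixed-point lower bound of \citet{jang2026higher} to the tangent residual through the correspondence $\oF=\oI-\oT$ from Section~\ref{subsec:special_cases}, and to run a resisting-oracle (zero-chain) argument that works against arbitrary deterministic $p$th-order methods.

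\textbf{Step 1: the hard instance.} I will use a chain-structured operator on $\reals^n$ with $n\geq 2N+1$. It has the form
\[
\oF(\vz)=\tfrac{L_pr^{p}}{c}\,G\!\left(U^\top \vz/r\right),
\]
with the following ingredients.
\begin{enumerate}[(i)]
\item $U$ has orthonormal columns $\vu_1,\dots,\vu_{2N+1}$, which are chosen adaptively.
\item $G$ is built on a tridiagonal chain. It couples coordinate $i$ to coordinate $i+1$ only through a smooth ``activation'' function $\Psi$ satisfying $\Psi(t)=0$ and all its derivatives up to order $p-1$ vanish for $t\le 1/2$. This mirrors the Carmon et al.\ and Arjevani et al.\ zero-chain constructions.
\item Monotonicity comes from writing $G=\oI-\oT$ with $\oT$ nonexpansive, exactly as in the construction of \citet{jang2026higher}. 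Their operator has Lipschitz $(p-1)$th derivative, a unique fixed point, and $\|\vz-\oT\vz\|$ bounded below at every point whose support misses the tail of the chain.
\end{enumerate}
I will take their construction as given, and check two things: that the rescaling by $r$ and by the amplitude gives $p$th-order Lipschitz constant at most $L_p$, and that it gives $\|\vz_0-\vz^*\|=D$. Translating $\vz_0$ to the origin is harmless.

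\textbf{Step 2: resisting oracle.} The zero-chain property is that if $\vz$ has nonzero correlation only with $\vu_1,\dots,\vu_j$, then all oracle outputs $D^i\oF(\vz)$, $i\le p-1$, involve only $\vu_1,\dots,\vu_{j+1}$ (beyond the already revealed span). Using this, I will build $U$ adaptively with the standard argument of \citet{arjevani2019oracle} or Carmon et al. Because the method is deterministic, each new query is a function of the outputs so far, so the adversary can choose the next basis vector orthogonal to all previous queries. This gives that after $N$ queries $\vz_N$ has essentially zero component on $\vu_{j}$ for $j>N+1$. The dimension requirement $n\geq 2N+1$ leaves enough room for the orthogonality, and no tensor-step structure is needed.

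\textbf{Step 3: quantitative bound.} I have to show that every $\vz$ supported on the first $N+1$ chain coordinates satisfies $\|\oF(\vz)\|\gtrsim L_pD^p/(N+1)^{(3p-1)/2}$. The scalings I expect are the following.
\begin{enumerate}[(i)]
\item The solution has norm $\sim r\sqrt{N}\cdot(\text{per-coordinate size})$, and this is set equal to $D$.
\item A truncated point leaves a residual of order $\text{amplitude}\times N^{-1}$, in the same way the fixed-point residual decays for Halpern-type constructions.
\item The $p$th-order Lipschitz constraint forces amplitude $\lesssim L_p r^{p}$ with per-coordinate scale tuned to $N$.
\end{enumerate}
Optimizing over $r$ should produce the exponent $(3p-1)/2$, the same balance that appears in the upper bound via Hölder with weights $A_{i+1}^2/a_i^{2p/(p-1)}$. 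The main obstacle is this step: verifying that the \citet{jang2026higher} operator, after rescaling, delivers exactly this tradeoff. That requires a uniform lower bound on $\|\vz-\oT\vz\|$ over all points supported on the revealed prefix, not only along the chain path. To get it, I will first split $\|\oF(\vz)\|^2$ into its coordinates and lower-bound the coordinate at the boundary index $N+1$, where the activation is forced to be inactive. I will then use telescoping along the chain to propagate the unit-size deficit at the solution's tail into a residual of size $\gtrsim$ amplitude$/N$.
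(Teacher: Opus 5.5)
Your route is the same as the paper's: the contractive fixed-point operator $\oT$ of \citet{jang2026higher}, the rescaling $\oF(\vz)=L_p\sigma^p(\oI-\oT)\bigl((\vz-\vz_0)/\sigma\bigr)$, an adaptively chosen orthonormal chain, and the residual bound at points orthogonal to the last chain vector. However, you leave open the step you call the main obstacle, and the idea that closes it is missing: the contraction factor must be tuned to $N$.

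\textbf{The missing choice of $q$.} You describe $\oT$ as merely nonexpansive and never fix $q$. Either natural reading fails:
\begin{itemize}
\item With $q=1$, Jang's lemmas do not apply. They require $R(1-q)>a+b$, and strict contraction is also what gives $\oF$ a unique zero.
\item With $q$ fixed, the guaranteed ratio $\norm{\vx-\oT(\vx)}/D_0$ is of order $q^N$, so after rescaling the bound is geometrically small in $N$.
\end{itemize}
The paper takes $q=N/(N+1)$ and $R=8a(N+1)$. Then $\theta,\delta\le 1/8$ and $q^{-N}\le e$, and Jang's lemma gives two facts at once: $D_0\le 9eB_{p-1}^{1/(p-1)}(N+1)^{3/2}$, and $\norm{\vx-\oT(\vx)}\ge 3D_0/(4e(N+1))$ for every $\vx$ with $\inprod{\vx}{\vv_{N+1}}=0$. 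The second bound already covers components outside the chain span, so there is no need to redo any coordinate splitting or telescoping. Since $\sigma=D/D_0$ is fixed by the distance requirement, there is also nothing to optimize over. Homogeneity gives $\norm{\oF(\vz_N)}=L_pD^pD_0^{-p}\norm{\vx_N-\oT(\vx_N)}\ge 3L_pD^p/\bigl(4eD_0^{p-1}(N+1)\bigr)$. The exponent $1+\tfrac32(p-1)=\tfrac{3p-1}{2}$ comes from $D_0\asymp(N+1)^{3/2}$, that is, a per-coordinate size $\asymp a/(1-q)$ over $N+1$ coordinates. Your heuristics (i)--(iii) point the right way, but they only materialize with this choice.

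\textbf{Misdescribed ingredients.} Several pieces, taken literally, would break the argument.
\begin{itemize}
\item Jang's $\varphi$ is not flat on a half-line. It only satisfies $\varphi^{(k)}(0)=0$ for $k\le p-1$ and is affine with slope $q$ for $|s|\ge a$. So ``essentially zero'' correlations would leak the hidden directions. You need exact orthogonality, which the rule $\vv_i\perp\operatorname{span}\{\vv_1,\dots,\vv_{i-1},\vx_0,\dots,\vx_i\}$ provides.
\item The chain has $N+1$ links, not $2N+1$. With $N+1$ links the constraint spaces have dimension at most $2N<n$. Keeping $N$ extra unrevealed chain columns orthogonal to all queries would need about $3N+1$ dimensions.
\item The terminal condition is $\inprod{\vx_N}{\vv_{N+1}}=0$, not ``supported on the first $N+1$ chain coordinates.'' For a chain of length $N+1$ the latter is vacuous; the fixed point $\sum_{i=1}^{N+1}s_i\vv_i$ itself satisfies it.
\item $\oT$ should be defined on all of $\reals^n$, as Jang's is, so that $\oI-\oT$ is the identity off the chain span. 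Your $G(U^\top\vz/r)$ with non-square $U$ is not even $\reals^n$-valued, and $UG(U^\top\cdot)$ would have a non-unique zero.
\end{itemize}
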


Our lower bound construction is directly based on the worst-case fixed-point operator construction in \cite[Theorem~3.1]{jang2026higher}.
Nevertheless, to precisely make the resisting-oracle argument in the style of \citep{NemirovskiYudin1983_problem}, we restate some technical lemmas from \citet{jang2026higher}.

\begin{lemma}[\cite{jang2026higher}, Lemma~2.3]
\label{lem:phi-a-b}
Let $r \in \mathbb{N}$ and $q \in (0,1)$. Then there exist a constant $B_r$ depending only on $r$, $a = \left(qB_r\right)^{1/r}$ and $\frac{qa}{2} \le b \le a$, such that there exists a $C^r$-function $\varphi\colon \reals\to\reals$ such that
\begin{enumerate}
    \item $\varphi^{(k)}(0) = 0$ for $k=0,\dots,r$,
    \item $0 \le \varphi'(s) \le q$ for all $s\in \reals$,
    \item $\varphi^{(r)}$ is $1$-Lipschitz continuous,
    \item $\varphi(s) = \begin{cases} qs-b & \text{if } s \ge a \\
    qs + b & \text{if } s \le -a \end{cases}$
    \item $\varphi(s) \ge qs - b$ for all $s \in \reals$.
\end{enumerate}
\end{lemma}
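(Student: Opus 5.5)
We construct $\varphi$ explicitly as a smoothed, clipped linear function. The idea is to take $\varphi'$ to be a $C^{r-1}$ bump-like function that equals $0$ near the origin, rises smoothly to $q$, and stays at $q$ for $|s|\ge a$. Concretely, we fix once and for all a $C^\infty$ (or at least $C^{r}$) nondecreasing transition function $\theta\colon\reals\to[0,1]$ with $\theta(t)=0$ for $t\le 0$, $\theta(t)=1$ for $t\ge 1$, and all derivatives up to order $r$ bounded by constants depending only on $r$. We then set
\[
\varphi'(s) := q\,\theta\!\left(\tfrac{|s|}{a}\right)
\qquad\text{and}\qquad
\varphi(s):=\int_0^s \varphi'(t)\,dt .
\]
This choice makes $\varphi$ odd and $\varphi'$ even. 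Since $\theta$ vanishes identically near $0$, the function $\varphi'$ is $C^r$ (indeed smooth), and all derivatives of $\varphi$ up to order $r$ vanish at $0$. This gives property 1, and property 2 holds because $0\le\theta\le 1$.

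\textbf{Property 4 and the size of $b$.} For $s\ge a$ we have $\varphi(s)=qs-b$, where
\[
b:=\int_0^a \bigl(q-\varphi'(t)\bigr)\,dt = qa\int_0^1\bigl(1-\theta(u)\bigr)\,du .
\]
By oddness, $\varphi(s)=qs+b$ for $s\le -a$. To obtain $\tfrac{qa}{2}\le b\le a$, we choose $\theta$ with $\int_0^1\theta\le 1/2$, for example by requiring $\theta(u)=0$ on $[0,1/2]$. Then $b\ge qa/2$. The upper bound $b\le qa\le a$ is immediate from $\theta\ge 0$ and $q<1$.

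\textbf{Property 5.} We need $\varphi(s)\ge qs-b$ for all $s$. For $s\ge 0$, the function $g(s)=\varphi(s)-qs$ is nonincreasing, since $g'=\varphi'-q\le 0$, and it equals $-b$ for $s\ge a$; hence $g\ge -b$ on $[0,\infty)$. For $s\le 0$, the same monotonicity of $g$ gives $g(s)\ge g(0)=0\ge -b$.

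\textbf{Property 3 and the choice of $a$ (the main point).} On $|s|\le a$ we have $\varphi^{(r+1)}(s)=q\,a^{-r}\,\theta^{(r)}(|s|/a)\cdot(\pm1)^r$, and outside this interval it vanishes. So $\varphi^{(r)}$ is Lipschitz with constant $q\,a^{-r}\sup|\theta^{(r)}|$. Setting $B_r:=\sup|\theta^{(r)}|$, which depends only on $r$, and $a:=(qB_r)^{1/r}$ makes this constant exactly $1$. If $B_r$ were too small to be compatible with the ordering of the other constants, we could simply enlarge it, since a smaller Lipschitz constant is still admissible.

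The only delicate step is checking that $\varphi'=q\theta(|s|/a)$ is $C^{r-1}$, so that $\varphi$ is $C^r$, despite the $|s|$. This holds because $\theta$ vanishes on $[0,1/2]$, so $\varphi'$ is identically zero near the origin and the kink of $|s|$ is never seen.
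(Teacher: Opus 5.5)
Your proof is correct. The paper gives no proof of this lemma; it restates it from \cite{jang2026higher} and uses it as a black box in the proof of Theorem~\ref{thm:higher_order_lower_bound}. So there is no in-paper argument to compare against, and your construction is a self-contained proof. The key mechanism is to fix one monotone $C^\infty$ transition profile $\theta$ and rescale it, so that $\varphi^{(r)}$ has Lipschitz constant $q a^{-r}\sup|\theta^{(r)}|$; this is exactly what the prescribed form $a=(qB_r)^{1/r}$ encodes. Your verification of properties 4 and 5, via oddness of $\varphi$ and monotonicity of $s\mapsto\varphi(s)-qs$, is sound. You in fact obtain the sharper bound $b\le qa$.

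Two minor presentation points. First, in your opening paragraph you say that $\theta$ vanishes identically near $0$ before you have imposed $\theta\equiv 0$ on $[0,1/2]$. If only $\theta\equiv 0$ on $(-\infty,0]$, this is not literally true. The smoothness claim still holds, because then $\theta(|s|/a)=\theta(s/a)+\theta(-s/a)$ is $C^\infty$, and all derivatives of $\theta$ vanish at $0$, which gives property 1. Once you impose $\theta\equiv 0$ on $[0,1/2]$, which you need anyway for $b\ge qa/2$, the issue disappears.

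Second, you should record that $B_r=\sup|\theta^{(r)}|$ is finite, since for $r\ge 1$ it is the supremum of a continuous function supported in $[1/2,1]$. It is also positive: if $\theta^{(r)}\equiv 0$, then $\theta$ would be a polynomial, which cannot equal $0$ on one half-line and $1$ on another. Hence $a=(qB_r)^{1/r}$ is a well-defined positive number. Your closing remark about enlarging $B_r$ is harmless but unnecessary.
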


\begin{lemma}[\cite{jang2026higher}, Lemmas~3.5, 3.6, 3.8]
\label{lem:fixed-point-worst-case}
Let $r, q, \varphi, a, b$ be as in Lemma~\ref{lem:phi-a-b}.
Let $n, N$ be positive integers such that $n \ge N+1$ and let $\vv_1, \dots, \vv_{N+1}$ be an orthonormal set of vectors in $\reals^n$.
Fix $R\geq a$ such that $R(1-q)>a+b$, and define
$s_{N+1}=R$, $s_i=\frac{s_{i+1}+b}{q}$ for $i=N,\dots,1$, and
$c=s_1+qR-b$.
Then
\[
    \oT(\vx)
    =c\vv_1-\varphi(\langle\vv_{N+1},\vx\rangle)\vv_1
       +\sum_{i=1}^{N}\varphi(\langle\vv_i,\vx\rangle)\vv_{i+1}
\]
is a $q$-contractive operator with $(r+1)$th-order Lipschitz
constant at most $1$.  Its unique fixed point
$\vx^*=\sum_{i=1}^{N+1}s_i\vv_i$ has exact norm
$D_0:=\norm{\vx^*}=(\sum_{i=1}^{N+1}s_i^2)^{1/2}>0$ and satisfies
\begin{enumerate}[label=(\alph*)]
    \item $D_0\leq\left(1+\frac{b}{R(1-q)}\right)
       q^{-N}R\sqrt{\sum_{j=0}^{N}q^{2j}}$.

    \item For any $\vx\in\reals^n$ such that
    $\inprod{\vx}{\vv_{N+1}}=0$, we have
    \[
        \norm{\vx-\oT(\vx)}
        \geq\frac{(1-\theta-\delta)(1+q)}
                       {\sum_{j=0}^{N}q^{-j}}D_0,
    \]
    where
    \[
        \theta:=\frac{b}{R(1-q)},\qquad
        \delta:=\frac{b\sqrt{\sum_{j=0}^{N}q^{2j}}}
                        {R(1+q^{N+1})}.
    \]
\end{enumerate}
\end{lemma}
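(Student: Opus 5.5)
The plan is to verify the claims in order: contractivity and smoothness first, then the fixed point, then bound (a), then bound (b). Throughout, write $x_i:=\inprod{\vx}{\vv_i}$. Since $\vv_1,\dots,\vv_{N+1}$ are orthonormal, the norm of any vector in their span is the $\ell_2$-norm of its coefficient vector.

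\emph{Contractivity and smoothness.} We have
\[
\oT(\vx)-\oT(\vy)=-\bigl(\varphi(x_{N+1})-\varphi(y_{N+1})\bigr)\vv_1+\sum_{i=1}^N\bigl(\varphi(x_i)-\varphi(y_i)\bigr)\vv_{i+1}.
\]
Property 2 of Lemma~\ref{lem:phi-a-b} gives $|\varphi(x_i)-\varphi(y_i)|\le q|x_i-y_i|$. Each index $i\in\{1,\dots,N+1\}$ appears exactly once, so $\norm{\oT(\vx)-\oT(\vy)}^2\le q^2\sum_{i=1}^{N+1}(x_i-y_i)^2\le q^2\norm{\vx-\vy}^2$.

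For the $(r+1)$th-order Lipschitz constant, the $r$th derivative acts coordinatewise:
\[
D^r\oT(\vx)[\vh]^r=-\varphi^{(r)}(x_{N+1})h_{N+1}^r\vv_1+\sum_{i=1}^N\varphi^{(r)}(x_i)h_i^r\vv_{i+1}.
\]
Property 3 of Lemma~\ref{lem:phi-a-b} and $|h_i|\le\norm{\vh}$ then give $\norm{(D^r\oT(\vx)-D^r\oT(\vy))[\vh]^r}\le\bigl(\sum_i(x_i-y_i)^2\bigr)^{1/2}\norm{\vh}^r$. The off-diagonal multilinear entries vanish, so the induced operator norm is controlled by this diagonal computation.

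\emph{Fixed point.} Uniqueness follows from Banach's theorem once contractivity is shown. Write $\Delta_i:=s_i-s_{i+1}$. From $s_i=(s_{i+1}+b)/q$ we get $\Delta_i=\bigl((1-q)s_{i+1}+b\bigr)/q>0$, so the $s_i$ increase as $i$ decreases, and hence $s_i\ge s_{N+1}=R\ge a$ for all $i$. Property 4 then gives $\varphi(s_i)=qs_i-b=s_{i+1}$, which matches the $\vv_{i+1}$-coordinates of $\oT(\vx^*)$. The $\vv_1$-coordinate of $\oT(\vx^*)$ is $c-\varphi(R)=c-qR+b=s_1$, by the definition of $c$.

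\emph{Bound (a).} The map $s\mapsto(s+b)/q$ has fixed point $-b/(1-q)$. Hence
\[
s_i+\frac{b}{1-q}=q^{-(N+1-i)}\Bigl(R+\frac{b}{1-q}\Bigr),
\]
so $0<s_i\le q^{-(N+1-i)}R(1+\theta)$. Summing squares and reindexing $j=N+1-i$ gives $D_0\le(1+\theta)q^{-N}R\bigl(\sum_{j=0}^N q^{2j}\bigr)^{1/2}$.

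\emph{Bound (b).} This is the main step. Fix $\vx$ with $x_{N+1}=0$, and let $e_j$ denote the coordinates of $\vx-\oT(\vx)$:
\[
e_1=x_1-c,\qquad e_{i+1}=x_{i+1}-\varphi(x_i)\ \ (1\le i<N),\qquad e_{N+1}=-\varphi(x_N).
\]
For $e_1$ we used $\varphi(0)=0$. Set $d_i:=s_i-x_i$, so $d_{N+1}=R$. Since $\varphi$ is nondecreasing with slope at most $q$, we have $\varphi(s_i)-\varphi(x_i)\le q\,d_i^+$. Subtracting the fixed-point relations gives $d_{i+1}\le q\,d_i^+ - e_{i+1}$. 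Also $d_1=s_1-x_1=b-qR-e_1$, so $d_1^+\le(b-qR)^+ +|e_1|$; I would handle the sign of $b-qR$ using the hypothesis $R(1-q)>a+b$.

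Unrolling the recursion from $d_{N+1}=R$ back to $d_1$ should give an inequality of the form
\[
R(1+q^{N+1})-q^Nb\;\lesssim\;\sum_{j}q^{\,N+1-j}|e_j|\;\le\;\norm{\vx-\oT(\vx)}\Bigl(\sum_{j=0}^Nq^{2j}\Bigr)^{1/2},
\]
where the last step is Cauchy--Schwarz. This yields $\norm{\vx-\oT(\vx)}\ge R(1+q^{N+1})(1-\delta')/\sqrt{\sum_j q^{2j}}$ with a correction $\delta'$ of order $\delta$. Dividing by the upper bound (a) on $D_0$ and using the identity
\[
q^{-N}\sum_j q^{2j}\Big/(1+q^{N+1})\cdot(1+q)=\sum_j q^{-j}\cdot\frac{(1+q)\sum_j q^{2j}}{(1+q^{N+1})\sum_j q^{j}},
\]
I would reduce the claim to the stated form with the factor $(1-\theta-\delta)$, where $1/(1+\theta)\ge1-\theta$.

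I expect the main obstacle to be the exact bookkeeping in this last step. The positive-part recursion must be handled carefully, since $d_i$ could be negative. The first thing I would try is to show that whenever some $d_i\le0$, the partial sum of $|e_j|$ alone already exceeds the target. After that, the precise constants $(1+q)$ and $\sum_j q^{-j}$ must be matched, which requires the geometric-series identities above.
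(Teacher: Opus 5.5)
The paper imports this lemma from \cite{jang2026higher} without proof, so I assess your argument on its own. Contractivity, the bound on $D^r\oT$, the fixed point, and (a) are fine. The gap is in (b), at the inequality $\varphi(s_i)-\varphi(x_i)\le q\,d_i^+$.

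Unrolling $d_{i+1}\le q\,d_i^+-e_{i+1}$ only gives $R\le q^N d_1^+ +\sum_{j=2}^{N+1}q^{N+1-j}|e_j|$. With $d_1=b-qR-e_1$, this yields your displayed inequality only when $b\ge qR$. The hypothesis $R(1-q)>a+b$ does not settle this sign, and the case that matters is $qR>b$. In the paper's application $qR=8aN>b$, and the same holds whenever $R$ is large. In that case, e.g.\ at $e_1=0$, you have $d_1<0$, so $d_1^+=0$. You then certify only $R\le\sum_j q^{N+1-j}|e_j|$, and the term $q^N(qR-b)$ is lost.

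This loss is fatal for the stated constant. As $R\to\infty$ with $q,N,b$ fixed, $\theta,\delta\to0$ and the right-hand side of (b) behaves like $(1+q^{N+1})R/(\sum_j q^{2j})^{1/2}$. Up to $O(b)$, this is also the true minimum of $\norm{\vx-\oT(\vx)}$ over $\inprod{\vx}{\vv_{N+1}}=0$. A bound of $R/(\sum_j q^{2j})^{1/2}$ therefore misses the target by the factor $1+q^{N+1}$. Your fallback does not repair this: $d_1\le 0$ already occurs with $e_1=0$, and after any nonpositive $d_i$ the tail sum again certifies only $R$.

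The missing observation is that no positive part is needed. Since $s_i\ge R\ge a$, property 4 of Lemma~\ref{lem:phi-a-b} gives $\varphi(s_i)=qs_i-b$. Property 5 gives $\varphi(x_i)\ge qx_i-b$ for every real $x_i$. Hence $\varphi(s_i)-\varphi(x_i)\le q\,d_i$ regardless of the sign of $d_i$.

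The recursion $d_{i+1}\le q\,d_i-e_{i+1}$ is then linear. Unrolling it from $d_1=b-qR-e_1$ to $d_{N+1}=R$ gives exactly $R(1+q^{N+1})-q^Nb\le-\sum_{j=1}^{N+1}q^{N+1-j}e_j\le\norm{\vx-\oT(\vx)}(\sum_{j=0}^Nq^{2j})^{1/2}$, with no case analysis.

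To finish, set $\delta':=q^Nb/(R(1+q^{N+1}))$. Then $\delta'\le\delta$, since $q^N\le1\le(\sum_jq^{2j})^{1/2}$. The fraction on the right of your last display equals exactly $1$, by $(1+q)\sum_{j=0}^Nq^{2j}=(1+q^{N+1})\sum_{j=0}^Nq^j$ and $\sum_jq^{-j}=q^{-N}\sum_jq^j$. Consequently (a) gives $(1+q)D_0/\sum_jq^{-j}\le(1+\theta)R(1+q^{N+1})/(\sum_jq^{2j})^{1/2}$. Finally, $(1-\delta')/(1+\theta)\ge(1-\delta')(1-\theta)\ge1-\theta-\delta$ yields (b).
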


\begin{proof}[Proof of \cref{thm:higher_order_lower_bound}]
Set $r=p-1$, $q=\frac{N}{N+1}$, and choose $R = \frac{8a}{1-q} = 8a(N+1)$.
Then the conditions of Lemma~\ref{lem:fixed-point-worst-case} are satisfied: clearly $R\ge a$, and $R(1-q) = 8a > a+b$.
Also, since $\sum_{j=0}^{N}q^{2j}\leq N+1$,
we have 
\[
    \theta=\frac{b}{8a}\leq\frac18,
    \qquad
    \delta\leq\frac{b\sqrt{N+1}}{8a(N+1)}\leq\frac18.
\]
Moreover, $q^{-N}\leq e$ and $b\leq a\leq B_r^{1/r}$, so we have
\begin{align}
\label{eqn:D0-upper-bound}
    D_0\leq\left(1+\frac{b}{8a}\right)
       q^{-N}8a(N+1)\sqrt{N+1}
    \leq9eB_r^{1/r}(N+1)^{3/2}.
\end{align}
Note that while we have not yet chosen the orthonormal vectors $\vv_1, \dots, \vv_{N+1}$ in Lemma~\ref{lem:fixed-point-worst-case}, all the other scalar parameters, including $D_0$, are determined at this point.

Now let $\sigma = \frac{D}{D_0}$.
Note that if $\oT$ is a $q$-contractive operator with $p$th order Lipschitz constant $\le 1$, then 
\begin{align}
    \oF(\vz)
    :=L_p\sigma^p(\oI-\oT)
       \left(\frac{\vz-\vz_0}{\sigma}\right)
    \label{eqn:worst-case-F}
\end{align}
is a $(1-q) L_p \sigma^{p-1}$-strongly monotone operator since $\inprod{(\oI-\oT)(\vx) - (\oI-\oT)(\vx')}{\vx - \vx'} \ge (1-q) \norm{\vx - \vx'}^2$ for any $\vx,\vx'$, so plugging in $\vx = \frac{\vz - \vz_0}{\sigma}$ and $\vx' = \frac{\vz' - \vz_0}{\sigma}$ gives 
\begin{align*}
    \inprod{\oF(\vz) - \oF(\vz')}{\vz - \vz'}
    & = L_p \sigma^{p+1} 
    \inprod{(\oI-\oT)(\vx)-(\oI-\oT)(\vx')}{\vx - \vx'} \\
    & \geq L_p \sigma^{p+1} (1-q) \norm{\vx - \vx'}^2 = L_p \sigma^{p-1} (1-q) \norm{\vz - \vz'}^2
\end{align*}
for any $\vz, \vz'$.
Also, we have
\[
\begin{aligned}
    \oF(\vz)&=L_p\sigma^p(\vx-\oT(\vx)),\\
    D\oF(\vz)&=L_p\sigma^{p-1}(\mI-D\oT(\vx)),\\
    D^j\oF(\vz)&=-L_p\sigma^{p-j}D^j\oT(\vx)
       \qquad(2\leq j\leq r)
\end{aligned}
\]
which in turn implies
\begin{align*}
    \norm{D^{p-1} \oF(\vz) - D^{p-1} \oF(\vz')}_\mathrm{op}
    = L_p \sigma \norm{D^{p-1} \oT(\vx) - D^{p-1} \oT(\vx')}_\mathrm{op} \le L_p \sigma \norm{\vx - \vx'} \le L_p \norm{\vz - \vz'} ,
\end{align*}
so $\oF$ is $p$th-order $L_p$-Lipschitz.

Now given a deterministic $p$th-order algorithm for monotone inclusion, we initialize it at $\vz_0$ and run it on \eqref{eqn:worst-case-F} with $\oT$ defined as in \cref{lem:fixed-point-worst-case}, while selecting the directions $\vv_1, \dots, \vv_{N+1}$ one at a time to satisfy
\begin{align}
\label{eqn:vi-perpendicular-to-history}
    \vv_i \perp \operatorname{span}\{\vv_1,\dots,\vv_{i-1},\vx_0,\dots,\vx_i\} := \mathcal{V}_i
\end{align}
where $\vx_t = \frac{\vz_t - \vz_0}{\sigma}$ for $t = 0, \dots, N$.
This is possible because $\vx_0 = \frac{\vz_0 - \vz_0}{\sigma} = 0$ so $\dim \mathcal{V}_i \le 2i \le 2N < n$, and given that \eqref{eqn:vi-perpendicular-to-history} holds, for each $\vx_i$ and $t > i$, the coefficients $-\varphi(\inprod{\vv_{N+1}}{\vx_i})$ and $\varphi(\inprod{\vv_{t}}{\vx_i})$ appearing in $\oT(\vx_i)$ both vanish, and so do their derivatives up to order $p-1$ by \cref{lem:phi-a-b}(a).
That is, the yet undetermined directions $\vv_t$ for $t > i+1$ do not affect the $\oF$-oracle response at $\vz_i$.

Note that if we let $\vx^*$ be the fixed point of $\oT$ and set $\vz^*=\vz_0+\sigma\vx^*$ to be the unique zero of $\oF$,
then we have $\norm{\vz_0-\vz^*} = \sigma\norm{\vx_0 - \vx^*} = \sigma D_0 = D$.
Also by our construction, we have $\inprod{\vv_{N+1}}{\vx_N} = 0$.
Since we have shown $\theta+\delta\leq 1/4$, Lemma~\ref{lem:fixed-point-worst-case}(b), together with the bound $\sum_{j=0}^{N}q^{-j}\leq(N+1)q^{-N}\leq e(N+1)$,
yields
\[
    \norm{\vx_N-\oT(\vx_N)}
    \geq\frac{(1-\theta-\delta)(1+q)}
                    {\sum_{j=0}^{N}q^{-j}}D_0
    \geq\frac{3D_0}{4e(N+1)}.
\]
Consequently,
\[
\begin{aligned}
    \norm{\oF(\vz_N)} = \frac{L_p D^p }{D_0^p} \norm{\vx_N - \oT(\vx_N)}
    \geq\frac{3L_pD^p}{4eD_0^{p-1}(N+1)}\geq\frac{3}{4e(9e)^{p-1}B_{p-1}}
       \frac{L_pD^p}{(N+1)^{(3p-1)/2}}
\end{aligned}
\]
where the last inequality uses \eqref{eqn:D0-upper-bound} with $r=p-1$.
This proves \eqref{eq:higher_order_lower_bound} with $C_p = \frac{3}{4e(9e)^{p-1}B_{p-1}}$.
Because the construction has $\oH=0$, its tangent residual is simply $\norm{\oF(\vz_N)}$, and thus the proof is complete.
\end{proof}

Theorem~\ref{thm:higher_order_lower_bound} implies that achieving
$\res^{\mathrm{tan}}(\vz_N)\leq\varepsilon$ requires
\[
  N
  =
  \Omega_p\left(
    \left(
      \frac{L_p\|\vz_0-\vz^*\|^p}{\varepsilon}
    \right)^{2/(3p-1)}
  \right).
\]
This matches the outer complexity of the AEP Tensor method in
Theorem~\ref{thm:higher_order_complexity}.  Together with
Theorem~\ref{thm:tensor_ls_complexity}, it leaves only the logarithmic cost
of the line search between our upper and lower oracle bounds.  In particular,
for $p=2$, the lower bound becomes
$\Omega((L_2\|\vz_0-\vz^*\|^2/\varepsilon)^{2/5})$, matching
Theorem~\ref{thm:second_order_complexity} up to that logarithmic factor.

\subsection{Discussion and interpretation}

While our \cref{thm:higher_order_lower_bound} uses the construction of \cite[Theorem~3.1]{jang2026higher}, 
the high-level message of their work is that using higher-order derivatives rather does \textit{not} accelerate $q$-contractive fixed-point iterations beyond what is achievable by first-order methods.
It therefore seems contrary that we use the same result to match our accelerated complexity $\widetilde{\Theta}\left( \left(\frac{L_p \norm{\vz_0 - \vz^*}^p}{\varepsilon}\right)^{2/(3p-1)} \right)$, which does improve significantly with growing $p$.

However, there are several subtle and critical distinctions. First, \cite[Theorem~3.1]{jang2026higher} lower-bounds
\[
    \frac{\norm{\vx_N - \oT(\vx_N)}}{\norm{\vx_0 - \vx^*}} \ge \frac{(1-\theta-\delta) (1+q)}{\sum_{j=0}^N q^j} 
\]
rather than the quantity $\frac{\norm{\vx_N - \oT(\vx_N)}}{\norm{\vx_0 - \vx^*}^p}$ which is scale-free under coordinate rescaling that preserves higher-order Lipschitz constant.
Consequently, the initial distance $D_0 = \norm{\vx_0 - \vx^*}$ in their construction is not arbitrary; it \textit{must} grow with $N$.
Put differently, this lower bound does not prevent the well-known superlinear convergence of higher-order restarting schemes~\cite{nesterov2008accelerating,ostroukhov2020tensor}
in the regime $N \gg D_0^{2/3}$.

Second, in our \cref{thm:higher_order_lower_bound}, the initial distance $D = \norm{\vz_0 - \vz^*}$ \textit{is indeed arbitrary}, which is enabled by the coordinate rescaling $\frac{\vz - \vz_0}{\sigma}$ used in \eqref{eqn:worst-case-F}.
This type of manipulation is not possible for the construction of \citet{jang2026higher} because it affects the contraction factor of the operator, while $q$ has to be fixed in their problem class.
In our case, selecting smaller $D$ would reduce the strong monotonicity parameter $(1-q)L_p \left(\frac{D}{D_0}\right)^{p-1}$ of $\oF$, which is admissible because we only require $\oF$ to be monotone.
This observation aligns with the fact that our higher-order bounds use the scale-free quantity $\frac{\norm{\oF(\vz_N)}}{L_p \norm{\vz_0 - \vz^*}^p}$ and thus should not depend on the choice of $D = \norm{\vz_0 - \vz^*}$.

\section{Conclusion}
In this work, we extend anchor acceleration for first-order monotone inclusions to second- and higher-order tensor methods.
Our proposed AEP framework takes a principled approach based on the anchored proximal point method, an extension of the optimal Halpern iteration using resolvent, and approximates each proximal step using the higher-order derivatives of the operator.
As the order $p$ of the available derivative information grows, increasingly aggressive progress can be made at each iteration, leading to faster convergence rates.
In particular, AEP attains a complexity of $\tilde{\bigO}\bigl(\varepsilon^{-2/(3p-1)}\bigr)$ for finding a point $\vz$ satisfying $\res^{\mathrm{tan}}(\vz)\leq\varepsilon$.
We further show that this rate is optimal up to a logarithmic factor by establishing a matching lower bound.

Our results suggest several natural directions for future work.
An immediate open question is whether the logarithmic gap between the upper and lower complexity bounds could be closed.
It would also be interesting to investigate whether, paralleling the development of first-order acceleration literature for monotone inclusions, our AEP framework/method can be extended to weakly nonmonotone inclusions, Nesterov-momentum type reformulations, stochastic settings, or alternative acceleration mechanisms.
Another promising direction is to develop quasi-Newton variants of AEP that retain the benefits of higher-order acceleration while avoiding the explicit use of higher-order derivatives.
Broadly, our results suggest that anchoring is not merely a first-order acceleration mechanism, but a general principle that systematically translates richer oracle information into faster methods for monotone inclusion problems.

\section*{Statement of AI usage and history}

We use generative AI (ChatGPT 5.6~Sol, ChatGPT~6~Astra and Codex) for testing hypotheses, discovering and writing parts of the proofs, and drafting sections of the paper.
Specifically, we have not used AI to develop the main conceptual framework of AEP in Section~\ref{sec:aep} and its iteration complexity analysis; we did use AI for developing the initial versions of the bisection line search in Section~\ref{subsec:ls} and the lower bound in Section~\ref{sec:lower_bound}. 
The main theoretical results, including the second-order complexity bound and the lower bound, were obtained with the aid of AI tools on or before August~31, and were initially drafted no later than September~9, earlier than the appearance of the concurrent work \citep{zhang2026matching} on September~{14}.
Since then, the authors have reviewed all AI-assisted proofs and writing, checked their correctness, and refined them, but have not made substantive changes to the essence of the results, except for the addition of the complexity bound for higher orders $p>2$.
We take full responsibility for the final contents of this work.

\newpage
\appendix

\makeatletter
\addtocontents{atoc}{} 

\let\orig@addcontentsline\addcontentsline
\renewcommand{\addcontentsline}[3]{\edef\@tempa{#1}\edef\@tempb{toc}\ifx\@tempa\@tempb
\orig@addcontentsline{atoc}{#2}{#3}\else
\orig@addcontentsline{#1}{#2}{#3}\fi
}
\makeatother

\clearpage
\section*{Contents of Appendix}
\setcounter{tocdepth}{3}
\makeatletter
\@starttoc{atoc}
\makeatother
\clearpage

\section{Proofs for the Anchored Extra-Proximal Framework}

\subsection{Proof of Proposition~\ref{prop:anchored_PPM}}\label{appen:anchored_PPM}

We begin by showing that the anchored PPM admits a nonincreasing Lyapunov
function in Lemma~\ref{lem:Lyapunov_PPM}. This descent property will then yield the tangent-residual bound in
Proposition~\ref{prop:anchored_PPM}.

\begin{lemma}\label{lem:Lyapunov_PPM}
Let $\{\vz_k\}$ be generated by the anchored PPM in~\eqref{eq:anchored_PPM},
and let $\vv_{k+1}\in\oH(\vz_{k+1})$ be defined by~\eqref{eq:def_v_+1}.
Set $V_0:=0$ and, for every $k\geq1$, define
\begin{equation}\label{eq:Lyapunov_PPM}
  V_k
  :=A_k\langle \oF(\vz_k)+\vv_k,\vz_k-\vz_0\rangle
  +\frac{A_k^2}{2}\|\oF(\vz_k)+\vv_k\|^2.
\end{equation}
Then $V_{k+1}\leq V_k$ for every $k\geq0$.
\end{lemma}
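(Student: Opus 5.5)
Write $\vg_k:=\oF(\vz_k)+\vv_k\in(\oF+\oH)(\vz_k)$; the whole argument rests on monotonicity of $\oF+\oH$ and the algebraic identity from the update. By \eqref{eq:def_v_+1},
\[
\vz_{k+1}-\vz_0=\frac{A_k}{A_{k+1}}(\vz_k-\vz_0)-a_k\vg_{k+1},
\]
since $\frac{a_k}{A_k+a_k}\vz_0+\frac{A_k}{A_k+a_k}\vz_k-\vz_0=\frac{A_k}{A_{k+1}}(\vz_k-\vz_0)$. Equivalently, $A_k(\vz_k-\vz_0)=A_{k+1}(\vz_{k+1}-\vz_0)+a_kA_{k+1}\vg_{k+1}$. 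I will substitute this into $V_k$ to express everything in terms of $\vz_{k+1}-\vz_0$, $\vg_k$ and $\vg_{k+1}$.

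The key inequality comes from monotonicity between $(\vz_k,\vg_k)$ and $(\vz_{k+1},\vg_{k+1})$:
\[
\langle \vg_{k+1}-\vg_k,\vz_{k+1}-\vz_k\rangle\ge0 .
\]
Multiply it by $A_kA_{k+1}/a_k$, or by some weight, and add it to $V_k-V_{k+1}$, hoping the remainder is a negative perfect square. Concretely, I will write $\vz_{k+1}-\vz_k=(\vz_{k+1}-\vz_0)-(\vz_k-\vz_0)$ and use the identity to eliminate $\vz_k-\vz_0$. Then $V_k-V_{k+1}$ minus a multiple $\lambda\ge0$ of the monotonicity term should be a quadratic in $\vg_k,\vg_{k+1}$ with the $\langle\cdot,\vz_{k+1}-\vz_0\rangle$ terms cancelling. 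The weight $\lambda$ is determined by requiring these cross terms with $\vz_{k+1}-\vz_0$ to cancel.

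Here is the check for the $\vz_{k+1}-\vz_0$ coefficient. $V_k$ contributes $A_k\langle\vg_k,\vz_k-\vz_0\rangle=A_{k+1}\langle\vg_k,\vz_{k+1}-\vz_0\rangle+a_kA_{k+1}\langle\vg_k,\vg_{k+1}\rangle$. $V_{k+1}$ contributes $A_{k+1}\langle\vg_{k+1},\vz_{k+1}-\vz_0\rangle$. For the monotone term, $\vz_{k+1}-\vz_k=(1-\tfrac{A_{k+1}}{A_k})(\vz_{k+1}-\vz_0)-\tfrac{a_kA_{k+1}}{A_k}\vg_{k+1}=-\tfrac{a_k}{A_k}(\vz_{k+1}-\vz_0)-\tfrac{a_kA_{k+1}}{A_k}\vg_{k+1}$. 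So taking $\lambda=A_kA_{k+1}/a_k$ gives the contribution $-A_{k+1}\langle\vg_{k+1}-\vg_k,\vz_{k+1}-\vz_0\rangle-A_{k+1}^2\langle\vg_{k+1}-\vg_k,\vg_{k+1}\rangle$, which exactly cancels the linear terms in $V_k-V_{k+1}$.

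What remains is
\[
V_k-V_{k+1}-\lambda(\cdot)=a_kA_{k+1}\langle\vg_k,\vg_{k+1}\rangle+\tfrac{A_k^2}{2}\|\vg_k\|^2-\tfrac{A_{k+1}^2}{2}\|\vg_{k+1}\|^2+A_{k+1}^2\|\vg_{k+1}\|^2-A_{k+1}^2\langle\vg_k,\vg_{k+1}\rangle ,
\]
with the signs to be checked carefully. Using $A_{k+1}^2-a_kA_{k+1}=A_kA_{k+1}$, this becomes $\tfrac{A_k^2}{2}\|\vg_k\|^2-A_kA_{k+1}\langle\vg_k,\vg_{k+1}\rangle+\tfrac{A_{k+1}^2}{2}\|\vg_{k+1}\|^2=\tfrac12\|A_k\vg_k-A_{k+1}\vg_{k+1}\|^2\ge0$. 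Hence $V_k-V_{k+1}\ge\lambda\langle\vg_{k+1}-\vg_k,\vz_{k+1}-\vz_k\rangle\ge0$.

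The case $k=0$ needs separate treatment: there $A_0=0$ and $\lambda$ degenerates, and $\vz_0$ need not carry a $\vg_0$. Here I compute directly: $\vz_1-\vz_0=-a_0\vg_1$, so $V_1=-a_0^2\|\vg_1\|^2+\tfrac{a_0^2}{2}\|\vg_1\|^2\le0=V_0$.

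The main obstacle is only the sign bookkeeping in the cancellation. The structural choice of weight $A_kA_{k+1}/a_k$ is forced by the cross terms.
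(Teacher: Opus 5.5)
Your proof is correct and follows the paper's argument. You use the same monotonicity inequality between $(\vz_k,\vg_k)$ and $(\vz_{k+1},\vg_{k+1})$, the same weight $A_kA_{k+1}/a_k$, the same square completion for the leftover quadratic (the paper phrases it as Young's inequality), and the same direct computation $V_1=-\tfrac{a_0^2}{2}\|\vg_1\|^2$ for $k=0$. Your signs all check out, and your bookkeeping in fact gives the exact identity $V_k-V_{k+1}=\tfrac{A_kA_{k+1}}{a_k}\langle \vg_{k+1}-\vg_k,\vz_{k+1}-\vz_k\rangle+\tfrac12\|A_k\vg_k-A_{k+1}\vg_{k+1}\|^2$.
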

\begin{proof}
For brevity, write $g_j:=\oF(\vz_j)+\vv_j$ for $j\geq1$.
When $k=0$, we have $A_0 = 0$ and the update in~\eqref{eq:def_v_+1} gives
$\vz_1-\vz_0=-a_0g_1$. Hence,
\begin{equation*}
  V_1
  =a_0\langle g_1,\vz_1-\vz_0\rangle
   +\frac{a_0^2}{2}\|g_1\|^2
  =-\frac{a_0^2}{2}\|g_1\|^2
  \leq0=V_0.
\end{equation*}

Now fix $k\geq1$. Since $\oF$ and $\oH$ are monotone, while
$\vv_k\in\oH(\vz_k)$ and $\vv_{k+1}\in\oH(\vz_{k+1})$, we have
\begin{equation}\label{eq:monotone_z_z_plus}
  \langle g_{k+1}-g_k,\vz_{k+1}-\vz_k\rangle\geq0.
\end{equation}
Moreover, from~\eqref{eq:def_v_+1} we can derive
\begin{align*}
  \vz_{k+1}-\vz_k
  &=\frac{a_k}{A_k+a_k}(\vz_0-\vz_k)-a_kg_{k+1}, \\
  \frac{A_k}{A_k+a_k}(\vz_{k+1}-\vz_k)
  &=\frac{a_k}{A_k+a_k}(\vz_0-\vz_{k+1})-a_kg_{k+1}.
\end{align*}
Therefore,
\begin{equation*}
  \langle g_{k+1},\vz_{k+1}-\vz_k\rangle
  =\frac{a_k}{A_k}\langle g_{k+1},\vz_0-\vz_{k+1}\rangle
   -\frac{a_k(A_k+a_k)}{A_k}\|g_{k+1}\|^2.
\end{equation*}
Similarly,
\begin{equation*}
  \langle g_k,\vz_{k+1}-\vz_k\rangle
  =\frac{a_k}{A_k+a_k}\langle g_k,\vz_0-\vz_k\rangle
   -a_k\langle g_{k+1},g_k\rangle.
\end{equation*}
Substituting these two identities into~\eqref{eq:monotone_z_z_plus} gives
\begin{equation*}
    (A_k+a_k)\langle g_{k+1},\vz_{k+1}-\vz_0\rangle
      +(A_k+a_k)^2\|g_{k+1}\|^2 \leq
      A_k\langle g_k,\vz_k-\vz_0\rangle
      +A_k(A_k+a_k)\langle g_{k+1},g_k\rangle.
\end{equation*}
By the Cauchy--Schwarz and Young inequalities, we have
\begin{equation*}
  A_k(A_k+a_k)\langle g_{k+1},g_k\rangle
  \leq\frac{A_k^2}{2}\|g_k\|^2
  +\frac{(A_k+a_k)^2}{2}\|g_{k+1}\|^2.
\end{equation*}
Since $A_{k+1}=A_k+a_k$, it follows that
\begin{equation*}
  A_{k+1}\langle g_{k+1},\vz_{k+1}-\vz_0\rangle
  +\frac{A_{k+1}^2}{2}\|g_{k+1}\|^2
  \leq
  A_k\langle g_k,\vz_k-\vz_0\rangle
  +\frac{A_k^2}{2}\|g_k\|^2.
\end{equation*}
By~\eqref{eq:Lyapunov_PPM}, this is precisely $V_{k+1}\leq V_k$.
\end{proof}

\begin{proof}[Proof of Proposition~\ref{prop:anchored_PPM}]
Fix $k\geq0$ and set
$g_{k+1}:=\oF(\vz_{k+1})+\vv_{k+1}$.
By Lemma~\ref{lem:Lyapunov_PPM},
$V_{k+1}\leq V_0=0$. Since $A_{k+1}>0$, dividing the definition
of $V_{k+1}$ by $A_{k+1}$ and rearranging gives
\begin{equation}\label{eq:ppm_residual_inner_product}
  \frac{A_{k+1}}{2}\|g_{k+1}\|^2
  \leq \langle g_{k+1},\vz_0-\vz_{k+1}\rangle.
\end{equation}

Because $\vz^*$ solves~\eqref{eq:monotone}, there exists
$\vv^*\in\oH(\vz^*)$ such that
$\oF(\vz^*)+\vv^*=0$. The monotonicity of $\oF$ and $\oH$ therefore
implies
\begin{equation*}
  \langle g_{k+1},\vz_{k+1}-\vz^*\rangle
  =\langle
    \oF(\vz_{k+1})+\vv_{k+1}-\oF(\vz^*)-\vv^*,
    \vz_{k+1}-\vz^*
  \rangle
  \geq0.
\end{equation*}
Consequently,
\begin{equation*}
  \langle g_{k+1},\vz_0-\vz_{k+1}\rangle
  \leq
  \langle g_{k+1},\vz_0-\vz_{k+1}\rangle
  +\langle g_{k+1},\vz_{k+1}-\vz^*\rangle =\langle g_{k+1},\vz_0-\vz^*\rangle.
\end{equation*}
Combining this inequality with~\eqref{eq:ppm_residual_inner_product}
and applying the Cauchy--Schwarz inequality yields
\begin{equation*}
  \frac{A_{k+1}}{2}\|g_{k+1}\|^2
  \leq \|g_{k+1}\|\,\|\vz_0-\vz^*\|.
\end{equation*}
If $g_{k+1}=0$, the desired bound is immediate. Otherwise, dividing by
$\|g_{k+1}\|$ gives
\begin{equation*}
  \|\oF(\vz_{k+1})+\vv_{k+1}\|
  =\|g_{k+1}\|
  \leq \frac{2\|\vz_0-\vz^*\|}{A_{k+1}}.
\end{equation*}
Finally, since $\vv_{k+1}\in\oH(\vz_{k+1})$, the definition
in~\eqref{eq:tangent_residual} gives
\begin{equation*}
  \res^{\mathrm{tan}}(\vz_{k+1})
  \leq\|\oF(\vz_{k+1})+\vv_{k+1}\|,
\end{equation*}
which completes the proof.
\end{proof}

\subsection{Proof of Theorem~\ref{thm:aep}}\label{appen:aep}

As in the proof of Proposition~\ref{prop:anchored_PPM}, we begin with a
Lyapunov descent lemma. As we shall see, the relative-error condition contributes an
additional nonpositive term to the one-step bound.

\begin{lemma}\label{lem:Lyapunov_AEP}
Let $\{\vz_k\}_{k\geq0}$ be generated by AEP in
Algorithm~\ref{alg:aep} with $\rho\in[0,1]$. Set $V_0:=0$ and, for every
$k\geq1$, define
\begin{equation}\label{eq:Lyapunov}
  V_k
  :=A_k\langle \oF(\vz_k)+\vv_k,\vz_k-\vz_0\rangle
  +\frac{A_k^2}{2}\|\oF(\vz_k)+\vv_k\|^2.
\end{equation}
Then, for every $k\geq0$,
\begin{equation}\label{eq:potential_diff}
  V_{k+1}
  \leq V_k
  -\frac{(1-\rho^2)A_{k+1}^2}{2a_k^2}
   \|\vz_{k+1}-\vz_{k+\frac{1}{2}}\|^2.
\end{equation}
\end{lemma}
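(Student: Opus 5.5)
The plan is to repeat the argument of Lemma~\ref{lem:Lyapunov_PPM}, but to keep the Young's-inequality step exact and carry along the residual $\vr_{k+1}$. Write $g_j:=\oF(\vz_j)+\vv_j$ for $j\geq 0$, so that $\vv_0\in\oH(\vz_0)$ defines $g_0$. Let $\vw_k:=\frac{a_k}{A_{k+1}}\vz_0+\frac{A_k}{A_{k+1}}\vz_k$ be the anchored point. By \eqref{eq:v_plus_AEP} we have $a_kg_{k+1}=\vw_k-\vz_{k+1}+\vr_{k+1}$, and by \eqref{eq:AEP_k+1/2} we have $\vz_{k+\frac12}=\vw_k-\frac{A_ka_k}{A_{k+1}}g_k$. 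Subtracting the two gives the key identity
\[
\frac{A_{k+1}}{a_k}\bigl(\vz_{k+1}-\vz_{k+\frac12}\bigr)=A_kg_k-A_{k+1}g_{k+1}+\frac{A_{k+1}}{a_k}\vr_{k+1}.
\]
Set $\vu:=\frac{A_{k+1}}{a_k}(\vz_{k+1}-\vz_{k+\frac12})$ and $\vr':=\frac{A_{k+1}}{a_k}\vr_{k+1}$. The error condition \eqref{eq:AEP_error_condition} then reads $\|\vr'\|\leq\rho\|\vu\|$.

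For $k=0$ we have $A_0=0$ and $\vz_1-\vz_0=-a_0g_1+\vr_1$, while $\vu=-a_0g_1+\vr_1$. A direct computation should give $V_1=-\tfrac12\|\vu\|^2+\tfrac12\|\vr'\|^2\leq-\tfrac{1-\rho^2}{2}\|\vu\|^2$, which is the claim.

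For $k\geq1$, I start from the monotonicity inequality $\langle g_{k+1}-g_k,\vz_{k+1}-\vz_k\rangle\geq0$. I substitute
\[
\vz_{k+1}-\vz_k=\tfrac{a_k}{A_{k+1}}(\vz_0-\vz_k)-a_kg_{k+1}+\vr_{k+1},
\]
together with its companion form
\[
\tfrac{A_k}{A_{k+1}}(\vz_{k+1}-\vz_k)=\tfrac{a_k}{A_{k+1}}(\vz_0-\vz_{k+1})-a_kg_{k+1}+\vr_{k+1},
\]
exactly as in the PPM proof. After multiplying through by $A_{k+1}/a_k$, the computation there yields
\[
A_{k+1}\langle g_{k+1},\vz_{k+1}-\vz_0\rangle+A_{k+1}^2\|g_{k+1}\|^2\leq A_k\langle g_k,\vz_k-\vz_0\rangle+A_kA_{k+1}\langle g_{k+1},g_k\rangle+E_k,
\]
where $E_k$ collects the new terms, linear in $\vr_{k+1}$. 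I expect $E_k=\langle A_{k+1}g_{k+1}-A_kg_k,\vr'\rangle$ up to the bookkeeping of coefficients. This is the step to check carefully; it is the main obstacle, since the two substitutions carry $\vr_{k+1}$ with different weights.

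Instead of applying Young's inequality, I use the exact identity
\[
A_kA_{k+1}\langle g_{k+1},g_k\rangle=\tfrac{A_k^2}{2}\|g_k\|^2+\tfrac{A_{k+1}^2}{2}\|g_{k+1}\|^2-\tfrac12\|A_kg_k-A_{k+1}g_{k+1}\|^2.
\]
This turns the inequality into $V_{k+1}\leq V_k-\tfrac12\|\vu-\vr'\|^2-\langle \vu-\vr',\vr'\rangle$, using $A_{k+1}g_{k+1}-A_kg_k=-(\vu-\vr')$. Completing the square gives
\[
-\tfrac12\|\vu-\vr'\|^2-\langle\vu-\vr',\vr'\rangle=-\tfrac12\|\vu\|^2+\tfrac12\|\vr'\|^2\leq-\tfrac{1-\rho^2}{2}\|\vu\|^2,
\]
which is exactly \eqref{eq:potential_diff}. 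If the sign of $E_k$ comes out opposite, the completed square would instead be $-\tfrac12\|\vu-2\vr'\|^2$-type, and the bound would fail. So the sign and coefficient verification of $E_k$ is the crux, and I will check it against the $k=0$ case as a consistency test.
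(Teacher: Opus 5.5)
Your proposal is correct and follows essentially the same route as the paper's proof: substitute the two displacement identities into $\langle g_{k+1}-g_k,\vz_{k+1}-\vz_k\rangle\geq 0$, scale the raw inequality by $A_kA_{k+1}/a_k$, and replace Young's inequality by exact square completions, using $\|A_{k+1}(g_{k+1}-\vr_{k+1}/a_k)-A_kg_k\|=\frac{A_{k+1}}{a_k}\|\vz_{k+1}-\vz_{k+\frac12}\|$, which is your identity $\bm{u}=A_kg_k-A_{k+1}g_{k+1}+\vr'$. The term you flagged does come out with the favorable sign, $E_k=\frac{A_{k+1}}{a_k}\langle A_{k+1}g_{k+1}-A_kg_k,\vr_{k+1}\rangle=\langle A_{k+1}g_{k+1}-A_kg_k,\vr'\rangle$, so your bound $V_{k+1}\leq V_k-\frac12\|\bm{u}\|^2+\frac12\|\vr'\|^2$ is exactly the paper's penultimate inequality, and your $k=0$ computation matches the paper's as well.
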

\begin{proof}
Write $g_j:=\oF(\vz_j)+\vv_j$ for brevity. When $k=0$,
\eqref{eq:AEP_k+1/2} gives $\vz_{\frac{1}{2}}=\vz_0$, while
\eqref{eq:v_plus_AEP} gives
\begin{equation*}
  \vz_1-\vz_0=-a_0g_1+\vr_1.
\end{equation*}
Consequently, using $A_1=a_0$ and
$a_0g_1=\vr_1-(\vz_1-\vz_{\frac{1}{2}})$, we obtain
\begin{equation*}
  V_1=a_0\langle g_1,\vz_1-\vz_0\rangle
    +\frac{a_0^2}{2}\|g_1\|^2 =\frac{1}{2}\|\vr_1\|^2
    -\frac{1}{2}\|\vz_1-\vz_{\frac{1}{2}}\|^2 \leq
    -\frac{1-\rho^2}{2}\|\vz_1-\vz_{\frac{1}{2}}\|^2,
\end{equation*}
where the last inequality follows from the relative-error condition in~\eqref{eq:AEP_error_condition}.
Thus,~\eqref{eq:potential_diff} holds for $k=0$.

Now fix $k\geq1$. From~\eqref{eq:v_plus_AEP}, we can write
\begin{align}
  \vz_{k+1}-\vz_k
  &=\frac{a_k}{A_{k+1}}(\vz_0-\vz_k)
    -a_kg_{k+1}+\vr_{k+1},
  \label{eq:aep_iterate_difference}
  \\
  \frac{A_k}{A_{k+1}}(\vz_{k+1}-\vz_k)
  &=\frac{a_k}{A_{k+1}}(\vz_0-\vz_{k+1})
    -a_kg_{k+1}+\vr_{k+1}.
  \label{eq:aep_weighted_iterate_difference}
\end{align}
Taking the inner product of~\eqref{eq:aep_weighted_iterate_difference}
with $g_{k+1}$ yields
\begin{equation}\label{eq:aep_gkp1_inner_product}
  \langle g_{k+1},\vz_{k+1}-\vz_k\rangle
  =\frac{a_k}{A_k}\langle g_{k+1},\vz_0-\vz_{k+1}\rangle
   -\frac{a_kA_{k+1}}{A_k}
    \left\langle
      g_{k+1},g_{k+1}-\frac{\vr_{k+1}}{a_k}
    \right\rangle.
\end{equation}
Similarly, taking the inner product of~\eqref{eq:aep_iterate_difference}
with $g_k$ gives
\begin{equation}\label{eq:aep_gk_inner_product}
  \langle g_k,\vz_{k+1}-\vz_k\rangle
  =\frac{a_k}{A_{k+1}}\langle g_k,\vz_0-\vz_k\rangle
   -a_k\left\langle
     g_k,g_{k+1}-\frac{\vr_{k+1}}{a_k}
   \right\rangle.
\end{equation}
On the other hand, the monotonicity of $\oF$ and $\oH$, together with
$\vv_j\in\oH(\vz_j)$, implies
\begin{equation*}
  \langle g_{k+1}-g_k,\vz_{k+1}-\vz_k\rangle\geq0.
\end{equation*}
Substituting~\eqref{eq:aep_gkp1_inner_product} and
\eqref{eq:aep_gk_inner_product} into this inequality and multiplying by
$A_kA_{k+1}/a_k$ gives
\begin{equation}\label{eq:aep_pre_square_completion}
  A_{k+1}\langle g_{k+1},\vz_{k+1}-\vz_0\rangle
   +A_{k+1}^2
    \left\langle g_{k+1}-\frac{\vr_{k+1}}{a_k},g_{k+1}\right\rangle \leq
   A_k\langle g_k,\vz_k-\vz_0\rangle
   +A_kA_{k+1}
    \left\langle g_{k+1}-\frac{\vr_{k+1}}{a_k},g_k\right\rangle.
\end{equation}
Next, we complete the square in the last term on the left to obtain
\begin{equation}\label{eq:aep_left_square_completion}
  A_{k+1}^2
   \left\langle g_{k+1}-\frac{\vr_{k+1}}{a_k},g_{k+1}\right\rangle =\frac{A_{k+1}^2}{2}
     \left\|g_{k+1}-\frac{\vr_{k+1}}{a_k}\right\|^2
     +\frac{A_{k+1}^2}{2}\|g_{k+1}\|^2 -\frac{A_{k+1}^2}{2a_k^2}\|\vr_{k+1}\|^2.
\end{equation}
Similarly, the last term on the right can be written as
\begin{equation}\label{eq:aep_right_square_completion}
A_kA_{k+1}
   \left\langle g_{k+1}-\frac{\vr_{k+1}}{a_k},g_k\right\rangle =\frac{A_{k+1}^2}{2}
     \left\|g_{k+1}-\frac{\vr_{k+1}}{a_k}\right\|^2
     +\frac{A_k^2}{2}\|g_k\|^2 -\frac{1}{2}
     \left\|A_{k+1}\left(g_{k+1}-\frac{\vr_{k+1}}{a_k}\right)
       -A_kg_k\right\|^2.
\end{equation}
Furthermore, from~\eqref{eq:AEP_k+1/2} and~\eqref{eq:v_plus_AEP}, respectively, we have
\begin{align*}
  A_{k+1}\vz_{k+\frac{1}{2}}
  &=a_k\vz_0+A_k\vz_k-a_kA_kg_k, \\
  A_{k+1}\vz_{k+1}
  &=a_k\vz_0+A_k\vz_k
    -a_kA_{k+1}\left(g_{k+1}-\frac{\vr_{k+1}}{a_k}\right).
\end{align*}
Subtracting these identities yields
\begin{equation}\label{eq:aep_square_norm_identity}
  \left\|A_{k+1}\left(g_{k+1}-\frac{\vr_{k+1}}{a_k}\right)
    -A_kg_k\right\|
  =\frac{A_{k+1}}{a_k}
   \|\vz_{k+1}-\vz_{k+\frac{1}{2}}\|.
\end{equation}
Substituting
\eqref{eq:aep_left_square_completion}--\eqref{eq:aep_square_norm_identity}
into~\eqref{eq:aep_pre_square_completion} and canceling the common term
$\frac{A_{k+1}^2}{2}
\left\|g_{k+1}-\frac{\vr_{k+1}}{a_k}\right\|^2$ gives
\begin{equation*}
  V_{k+1}
  \leq V_k
  +\frac{A_{k+1}^2}{2a_k^2}
   \left(
     \|\vr_{k+1}\|^2
     -\|\vz_{k+1}-\vz_{k+\frac{1}{2}}\|^2
   \right).
\end{equation*}
Finally, the relative-error condition~\eqref{eq:AEP_error_condition} implies
\begin{equation*}
  \|\vr_{k+1}\|^2
  -\|\vz_{k+1}-\vz_{k+\frac{1}{2}}\|^2
  \leq
  -(1-\rho^2)\|\vz_{k+1}-\vz_{k+\frac{1}{2}}\|^2,
\end{equation*}
which proves~\eqref{eq:potential_diff}.
\end{proof}

\begin{proof}[Proof of Theorem~\ref{thm:aep}]
Because $\rho\in[0,1]$, Lemma~\ref{lem:Lyapunov_AEP} implies
$V_{k+1}\leq V_0=0$ for every $k\geq0$. Since
$\vv_{k+1}\in\oH(\vz_{k+1})$, the remainder follows from the same argument
as in the proof of Proposition~\ref{prop:anchored_PPM}.
\end{proof}

\subsection{Proof of Lemma~\ref{lem:stability}}\label{appen:stability}

Rearranging the inequality in~\eqref{eq:potential_diff} gives, for every
$i\geq0$,
\begin{equation*}
  \frac{1-\rho^2}{2}
  \frac{A_{i+1}^2}{a_i^2}
  \|\vz_{i+1}-\vz_{i+\frac{1}{2}}\|^2
  \leq V_i-V_{i+1}.
\end{equation*}
Summing from $i=0$ to $k$ and using $V_0=0$ yields
\begin{equation}\label{eq:stability_telescoping}
  \frac{1-\rho^2}{2}
  \sum_{i=0}^k
  \frac{A_{i+1}^2}{a_i^2}
  \|\vz_{i+1}-\vz_{i+\frac{1}{2}}\|^2
  \leq V_0-V_{k+1}=-V_{k+1}.
\end{equation}

It remains to bound $-V_{k+1}$. Set
$g_{k+1}:=\oF(\vz_{k+1})+\vv_{k+1}$. Since $\vz^\star$
solves~\eqref{eq:monotone}, there exists $\vv^\star\in\oH(\vz^\star)$
such that $\oF(\vz^\star)+\vv^\star=0$. By the monotonicity of $\oF$
and $\oH$,
\begin{equation*}
  \langle g_{k+1},\vz_{k+1}-\vz^\star\rangle
  =\langle
    \oF(\vz_{k+1})+\vv_{k+1}-\oF(\vz^\star)-\vv^\star,
    \vz_{k+1}-\vz^\star
  \rangle
  \geq0.
\end{equation*}
Consequently,
\begin{equation}\label{eq:stability_potential_bound}
\begin{aligned}
  -V_{k+1}
  &=A_{k+1}\langle g_{k+1},\vz_0-\vz_{k+1}\rangle
    -\frac{A_{k+1}^2}{2}\|g_{k+1}\|^2 \\
  &\leq A_{k+1}\langle g_{k+1},\vz_0-\vz^\star\rangle
    -\frac{A_{k+1}^2}{2}\|g_{k+1}\|^2 \\
  &\leq A_{k+1}\|g_{k+1}\|\,\|\vz_0-\vz^\star\|
    -\frac{A_{k+1}^2}{2}\|g_{k+1}\|^2 \\
  &\leq\frac{1}{2}\|\vz_0-\vz^\star\|^2,
\end{aligned}
\end{equation}
where the second inequality follows from the Cauchy--Schwarz inequality and
the last follows from Young's inequality. Combining
\eqref{eq:stability_telescoping} and~\eqref{eq:stability_potential_bound}
and using $\rho<1$ proves~\eqref{eq:aep_stability}.

\section{Proofs for the line-search scheme}

\subsection{Proof of Lemma~\ref{lem:ls_residual}}\label{appen:ls_residual}
Fix a trial step size $a>0$. By~\eqref{eq:trial_v}, we have
\begin{equation*}
  \oP_{k,a}(\vz_{k+1}(a))+
  \vv_{k+1}(a)
  =
  \frac{1}{a}
  \left(
    \frac{a}{A_k+a}\vz_0
    +\frac{A_k}{A_k+a}\vz_k
    -\vz_{k+1}(a)
  \right).
\end{equation*}
On the other hand, evaluating~\eqref{eq:AEP_k+1/2} at the trial step
size $a$ gives
\begin{equation*}
  \frac{a}{A_k+a}\vz_0
  +\frac{A_k}{A_k+a}\vz_k
  =
  \vz_{k+\frac{1}{2}}(a)
  +\frac{aA_k}{A_k+a}\bigl(\oF(\vz_k)+\vv_k\bigr).
\end{equation*}
Substituting this identity into the preceding identity leads to 
\begin{equation}\label{eq:trial_identity}
  \oP_{k,a}(\vz_{k+1}(a))+
  \vv_{k+1}(a)
  =
  \frac{A_k}{A_k+a}
  \bigl(\oF(\vz_k)+\vv_k\bigr)
  -\frac{\vz_{k+1}(a)-\vz_{k+\frac{1}{2}}(a)}{a}.
\end{equation}
By applying the triangle inequality and using
the Taylor remainder bound on $\|\oF(\vz_{k+1}(a))-\oP_{k,a}(\vz_{k+1}(a))\|$, we further have 
\begin{align*}
  \|\oF(\vz_{k+1}(a))+\vv_{k+1}(a)\|
  &\leq
  \frac{A_k}{A_k+a}\|\oF(\vz_k)+\vv_k\|
  +\frac{\|\vz_{k+1}(a)-\vz_{k+\frac{1}{2}}(a)\|}{a}
  +\frac{L_2}{2}
  \|\vz_{k+1}(a)-\vz_{k+\frac{1}{2}}(a)\|^2.
\end{align*}
Suppose that
$\phi_k(a)=L_2a
\|\vz_{k+1}(a)-\vz_{k+\frac{1}{2}}(a)\|\leq\sqrt{2}$, then
$\|\vz_{k+1}(a)-\vz_{k+\frac{1}{2}}(a)\|
  \leq\frac{\sqrt{2}}{L_2a}$.
Using this inequality and $A_k/(A_k+a)\leq A_k/a$ in the preceding bound,
we obtain
\begin{equation}\label{eq:cap_residual_bound}
  \|\oF(\vz_{k+1}(a))+\vv_{k+1}(a)\|
  \leq
  \frac{A_k\|\oF(\vz_k)+\vv_k\|}{a}
  +\frac{1+\sqrt{2}}{L_2a^2}.
\end{equation}

We now set $a=\bar a_k$. The definition~\eqref{eq:ls_cap} gives the two
bounds
\begin{align*}
  \bar a_k
  &\geq
  \frac{2A_k\|\oF(\vz_k)+\vv_k\|}{\varepsilon},
  &
  \bar a_k^2
  &\geq
  \frac{2(1+\sqrt{2})}{L_2\varepsilon}.
\end{align*}
Consequently, each term on the right-hand side of
\eqref{eq:cap_residual_bound} is at most $\varepsilon/2$. Under the
hypothesis $\phi_k(\bar a_k)\leq\sqrt{2}$, that bound therefore gives
\[
  \|\oF(\vz_{k+1}(\bar a_k))
    +\vv_{k+1}(\bar a_k)\|
  \leq
  \varepsilon.
\]
Finally,
$\vv_{k+1}(\bar a_k)\in\oH(\vz_{k+1}(\bar a_k))$, and hence the
definition of the tangent residual implies
$\res^{\mathrm{tan}}(\vz_{k+1}(\bar a_k))
  \leq
  \varepsilon$, as claimed.

\subsection{Proof of Lemma~\ref{lem:ls_bisection_complexity}}

Suppose that the bisection has performed $t$ unsuccessful steps. According to the invariants maintained by the bisection, its
endpoints still satisfy
\[
  \phi_k(a^-)<1,
  \qquad
  \phi_k(a^+)>\sqrt{2},
\]
and the current interval has length
$a^+-a^-=\bar a_k/2^t$. Therefore,
\[
  \sqrt{2}-1
  <
  \phi_k(a^+)-\phi_k(a^-)
  \leq
  \Lip(\phi_k;[0,\bar a_k])(a^+-a^-)
  =
  \frac{\bar a_k\Lip(\phi_k;[0,\bar a_k])}{2^t}.
\]
Solving the above inequality gives 
\begin{equation*}
  t < \log_2
      \left(
        \frac{\bar a_k \Lip(\phi_k;[0,\bar a_k])}{\sqrt{2}-1}
      \right).
\end{equation*}
The bisection consequently cannot perform as many unsuccessful steps as the
ceiling in~\eqref{eq:ls_bisection_bound}; its next trial must lie in the
acceptance range. This completes the proof.

\section{Proofs of the Second-Order Complexity Bounds}
\label{appen:second_order_complexity}

We collect here the proofs and technical lemmas used in Section~\ref{subsec:second_order_complexity}.

\subsection{Outer iteration complexity}

\begin{proof}[Proof of Theorem~\ref{thm:second_order_complexity}]
  If an upper trial step size terminates the method during the
  first $T$ outer iterations, the conclusion follows from
  Lemma~\ref{lem:ls_residual}. Otherwise, the line search returns $T$ accepted
  steps. By~\eqref{eq:trial_error_merit} and the upper bound in
  \eqref{eq:ls_acceptance_window}, the relative-error condition in \eqref{eq:surrogate_error_condition} is satisfied and hence these $T$ accepted steps form an instance
  of AEP with $\rho=1/\sqrt{2}$. Moreover, the lower bound in
  \eqref{eq:ls_acceptance_window} gives, for every such step,
  \[
    L_2a_i
    \|\vz_{i+1}-\vz_{i+\frac{1}{2}}\|
    \geq1.
  \]
  Applying Lemma~\ref{lem:stability} with $\rho=1/\sqrt{2}$ therefore yields
  \begin{equation}\label{eq:appendix_second_order_sum}
    \sum_{i=0}^{k-1}
    \frac{A_{i+1}^2}{a_i^4}
    \leq
    L_2^2
    \sum_{i=0}^{k-1}
    \frac{A_{i+1}^2}{a_i^2}
    \|\vz_{i+1}-\vz_{i+\frac{1}{2}}\|^2
    \leq
    2L_2^2\|\vz_0-\vz^*\|^2,
    \qquad k=1,\ldots,T.
  \end{equation}
  As in Section~\ref{sec:AEP_Newton}, H\"older's inequality gives, for every
  $k=1,\ldots,T$,
  \[
    \sum_{i=0}^{k-1}A_{i+1}^{2/5}
    =
    \sum_{i=0}^{k-1}
    a_i^{4/5}
    \left(\frac{A_{i+1}^2}{a_i^4}\right)^{1/5}
    \leq
    A_k^{4/5}
    \left(
      \sum_{i=0}^{k-1}\frac{A_{i+1}^2}{a_i^4}
    \right)^{1/5}.
  \]
  Combining this inequality with~\eqref{eq:appendix_second_order_sum} yields
  \begin{equation}\label{eq:appendix_second_order_growth_recursion}
    A_k^{4/5}
    \geq
    \left(2L_2^2\|\vz_0-\vz^*\|^2\right)^{-1/5}
    \sum_{j=1}^k A_j^{2/5}.
  \end{equation}
  Set $B_k:=A_k^{2/5}$. The sequence $\{B_k\}_{k\geq1}$ is positive and
  nondecreasing, and~\eqref{eq:appendix_second_order_growth_recursion} becomes
  \[
    B_k^2
    \geq
    c\sum_{j=1}^k B_j,
    \qquad
    c:=\left(2L_2^2\|\vz_0-\vz^*\|^2\right)^{-1/5}.
  \]

  For completeness, we state the sequence-growth result used to solve this
  recursion from \cite{bubeck2019near}.
  \begin{lemma}[{\cite[Lemma~12]{bubeck2019near}}]
    \label{lem:growth}
    Let $\{B_k\}_{k\geq1}$ be a positive, nondecreasing sequence. If there
    exist constants $\alpha>1$ and $c>0$ such that $B_k^{\alpha}\geq c\sum_{j=1}^k B_j$ for every $k\geq1$, then
    \[
      B_k\geq
      \left(\frac{\alpha-1}{\alpha}ck\right)^{1/(\alpha-1)}.
    \]
  \end{lemma}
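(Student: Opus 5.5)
We argue by induction on $k$, aiming at the explicit bound
\[
  B_k\geq\Bigl(\tfrac{\alpha-1}{\alpha}ck\Bigr)^{1/(\alpha-1)}.
\]
Write $\beta:=1/(\alpha-1)$ and $\gamma:=\frac{\alpha-1}{\alpha}c$, so the target is $B_k\geq(\gamma k)^{\beta}$.

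\textbf{Base case.} For $k=1$ the hypothesis reads $B_1^\alpha\geq cB_1$, so $B_1\geq c^{\beta}\geq(\gamma)^\beta$, since $\gamma\leq c$.

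\textbf{Inductive step.} Suppose $B_j\geq(\gamma j)^\beta$ for all $j\leq k-1$. The hypothesis gives
\[
  B_k^\alpha\geq c\,B_k+c\sum_{j=1}^{k-1}B_j ,
\]
where we have split off the $j=k$ term. We bound the partial sum from below by an integral. Since $t\mapsto t^\beta$ is increasing,
\[
  \sum_{j=1}^{k-1}j^\beta\geq\int_0^{k-1}t^\beta\,dt=\frac{(k-1)^{\beta+1}}{\beta+1}.
\]
Note that $\beta+1=\alpha\beta$, so $\frac{1}{\beta+1}=\frac{\alpha-1}{\alpha}$.

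Now argue by contradiction: suppose $B_k<(\gamma k)^\beta$. Using $B_k^\alpha\geq cB_k$ together with the integral bound,
\[
  B_k^{\alpha-1}\geq c+\frac{c}{B_k}\,\gamma^\beta\frac{\alpha-1}{\alpha}(k-1)^{\alpha\beta}.
\]
Substituting $B_k<(\gamma k)^\beta$ in the denominator gives
\[
  B_k^{\alpha-1}>c+\frac{\alpha-1}{\alpha}c\,\frac{(k-1)^{\alpha\beta}}{k^\beta}.
\]
Since $\alpha\beta-\beta=1$, we have $(k-1)^{\alpha\beta}/k^\beta\geq (k-1)^{\alpha\beta}/(k-1)^{\beta}\cdot\bigl((k-1)/k\bigr)^{\beta}$, so this route needs care. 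A cleaner path is to use monotonicity directly. Because $B_j\leq B_k$ for every $j\leq k$, the hypothesis yields $B_k^{\alpha}\geq c\sum_{j\le k}B_j$, and we then compare $B_k^{\alpha-1}$ with a term linear in $k$.

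\textbf{Main obstacle.} The delicate point is getting the exact constant $\frac{\alpha-1}{\alpha}$ rather than a weaker one. Plain induction loses a factor through the $(k-1)/k$ discrepancy above. To keep the constant, we would switch to the continuous comparison standard in \cite{bubeck2019near}. Define $S_k:=\sum_{j\le k}B_j$. Then $S_k-S_{k-1}=B_k\geq(cS_k)^{1/\alpha}$. Because $S$ is increasing,
\[
  S_k^{1-1/\alpha}-S_{k-1}^{1-1/\alpha}\geq(1-1/\alpha)\,S_k^{-1/\alpha}\,(S_k-S_{k-1})\geq(1-1/\alpha)\,c^{1/\alpha}.
\]
This follows from the mean value theorem applied to the concave function $x^{1-1/\alpha}$, whose derivative at the larger point $S_k$ is the smallest. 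Summing these increments from $S_0=0$ gives
\[
  S_k^{(\alpha-1)/\alpha}\geq\frac{\alpha-1}{\alpha}c^{1/\alpha}k.
\]
Finally, $B_k\geq(cS_k)^{1/\alpha}$ yields
\[
  B_k^{\alpha-1}\geq c^{(\alpha-1)/\alpha}S_k^{(\alpha-1)/\alpha}\geq\frac{\alpha-1}{\alpha}ck,
\]
which is the claim. This telescoping argument is the route we would actually write up.
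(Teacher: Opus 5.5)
Your final telescoping argument is correct and gives exactly the stated constant. The paper has no proof to compare against: it quotes the lemma from \cite[Lemma~12]{bubeck2019near} and uses it as a black box in the proofs of Theorems~\ref{thm:second_order_complexity} and~\ref{thm:higher_order_complexity}. Your route has four steps:
\begin{enumerate}[(i)]
  \item pass to the partial sums $S_k=\sum_{j\le k}B_j$ and read the hypothesis as $S_k-S_{k-1}\ge c^{1/\alpha}S_k^{1/\alpha}$;
  \item use concavity of $x\mapsto x^{1-1/\alpha}$, evaluating the derivative at the right endpoint $S_k$, to show that each increment of $S_k^{1-1/\alpha}$ is at least $\frac{\alpha-1}{\alpha}c^{1/\alpha}$ (this is fine at $k=1$, since the map is continuous at $0$);
  \item telescope from $S_0=0$;
  \item substitute back into $B_k^{\alpha-1}\ge (cS_k)^{(\alpha-1)/\alpha}$.
\end{enumerate}
This makes the argument self-contained at the cost of a few lines. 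It also shows slightly more than the statement: only positivity of the $B_k$ is used, so the nondecreasing hypothesis is superfluous. That is harmless in the paper's applications, where $B_k$ is a positive power of the increasing sequence $A_k$.

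For the write-up, drop the exploratory induction material. That includes the sentence about ``using monotonicity directly,'' which only restates the hypothesis.

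Your diagnosis that induction necessarily loses the constant is also mistaken. Rewrite your own display using $c=(\beta+1)\gamma$ and $\alpha\beta=\beta+1$; it reads $B_k^{\alpha-1}>\gamma\bigl[(\beta+1)+(k-1)^{\beta+1}/k^{\beta}\bigr]$. Bernoulli's inequality $(1-1/k)^{\beta+1}\ge 1-(\beta+1)/k$ gives $(k-1)^{\beta+1}/k^{\beta}\ge k-(\beta+1)$, so the bracket is at least $k$. This contradicts $B_k^{\alpha-1}<\gamma k$. So the induction also closes with the sharp constant; the telescoping proof is simply cleaner.
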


  Applying Lemma~\ref{lem:growth} to the preceding recursion with
  $\alpha=2$ gives $B_t\geq ct/2$. Consequently,
  \begin{equation}\label{eq:appendix_second_order_A_growth}
    A_t
    =B_t^{5/2}
    \geq
    \left(\frac{ct}{2}\right)^{5/2}
    =
    \frac{t^{5/2}}{8L_2\|\vz_0-\vz^*\|}.
  \end{equation}
  For
  $T=\left\lceil
    \left(16L_2\|\vz_0-\vz^*\|^2/\varepsilon\right)^{2/5}
  \right\rceil$, the last inequality implies
  $A_T\geq2\|\vz_0-\vz^*\|/\varepsilon$.
  Theorem~\ref{thm:aep} then gives
  \[
    \|\oF(\vz_{T})+\vv_{T}\|
    \leq
    \frac{2\|\vz_0-\vz^*\|}{A_T}
    \leq
    \varepsilon.
  \]
  Thus, the stopping test in Algorithm~\ref{alg:aep_newton} returns
  $\vz_T$, unless the method has already terminated earlier.
\end{proof}

\subsection{Line-Search Complexity}\label{appen:second_order_merit_regular}

We first derive the line-search bound from
Lemma~\ref{lem:second_order_merit_regular} and the generic bisection estimate
in Lemma~\ref{lem:ls_bisection_complexity}.

\begin{proof}[Proof of Theorem~\ref{thm:second_order_ls_complexity}]
  If the initial trial at $a=\bar a_k$ passes the termination test, the
  subroutine returns after one trial evaluation. We may therefore suppose
  that the bisection phase is executed. By
  Lemma~\ref{lem:ls_bisection_complexity}, it suffices to bound
  $\bar a_k\Lip(\phi_k;[0,\bar a_k])$. The upper bound in 
  \eqref{eq:merit_lipschitz_bound} gives
  \begin{equation}\label{eq:appendix_merit_interval_product}
    \bar a_k\Lip(\phi_k;[0,\bar a_k])
    \leq
    29L_2\|\vz_0-\vz^*\|\bar a_k
    \bigl(1+L_2\|\vz_0-\vz^*\|\bar a_k\bigr)^2.
  \end{equation}
  Moreover,~\eqref{eq:ls_cap_upper_bound} implies that 
  \[
    L_2\|\vz_0-\vz^*\|\bar a_k
    \leq
    \max\left\{
      \frac{4L_2\|\vz_0-\vz^*\|^2}{\varepsilon},
      \sqrt{
        2(1+\sqrt{2})
        \frac{L_2\|\vz_0-\vz^*\|^2}{\varepsilon}
      }
    \right\}.
  \]
  Setting
  \[
    q:=\frac{L_2\|\vz_0-\vz^*\|^2}{\varepsilon},
  \]
  we obtain
  \[
    L_2\|\vz_0-\vz^*\|\bar a_k
    \leq
    \max\left\{4q,\sqrt{2(1+\sqrt{2})q}\right\}
    \leq4(1+q),
  \]
  where the last inequality uses $\sqrt q\leq(1+q)/2$ and
  $\sqrt{2(1+\sqrt{2})}<4$. Since
  $1+4(1+q)\leq5(1+q)$,~\eqref{eq:appendix_merit_interval_product}
  further implies
  \begin{equation}\label{eq:appendix_merit_product_q}
    \bar a_k\Lip(\phi_k;[0,\bar a_k])
    \leq
    29\cdot4(1+q)\bigl(1+4(1+q)\bigr)^2
    \leq
    2900(1+q)^3.
  \end{equation}

  Applying Lemma~\ref{lem:ls_bisection_complexity} and adding the initial
  evaluation at $a=\bar a_k$, the total number of trial evaluations is at
  most
  \[
    2+
    \left\lceil
      \log_2\left(
        \frac{2900(1+q)^3}{\sqrt{2}-1}
      \right)
    \right\rceil.
  \]
  This bound is
  \[
    \bigO\bigl(1+\log(1+q)\bigr)
    =
    \bigO\left(
      1+\log\left(
        1+\frac{L_2\|\vz_0-\vz^*\|^2}{\varepsilon}
      \right)
    \right).
  \]
  This proves~\eqref{eq:second_order_ls_complexity}.
\end{proof}

We next prove Lemma~\ref{lem:second_order_merit_regular}. The key argument is to first reduce the Lipschitz modulus of the merit function to uniform and Lipschitz
bounds for the mapping $a \mapsto \vz_{k+1}(a)-\vz_{k+1/2}(a)$. Specifically, we define the trial displacement 
\begin{equation}\label{eq:trial_displacement_map}
  \vd_k(a):=\vz_{k+1}(a)-\vz_{k+1/2}(a),
  \qquad a>0,
  \qquad \vd_k(0):=0.
\end{equation}
Moreover, we similarly define the Lipschitz modulus of $\vd_k$ over the interval $[0, \bar{a}_k]$ as 
\begin{equation}\label{eq:merit_lipschitz_constant}
  \Lip(\vd_k;[0,\bar a_k]) := \sup_{0 \leq {a<b} \leq \bar a_k}
  \frac{\|\vd_k(a)-\vd_k(b)\|}{|a-b|}.
\end{equation}
The following elementary bound makes this reduction precise.
\begin{lemma}
  \label{lem:ls_sensitivity}
  Suppose that $\vd_k$ is Lipschitz continuous on $[0,\bar a_k]$. Then
  $\phi_k$ is Lipschitz continuous on the same interval, with
  \begin{equation}\label{eq:ls_sensitivity_reduction}
    \Lip(\phi_k;[0,\bar a_k])
    \leq L_2\left(
      \sup_{0\leq a\leq\bar a_k}\|\vd_k(a)\|
      +\bar a_k\Lip(\vd_k;[0,\bar a_k])
    \right).
  \end{equation}
\end{lemma}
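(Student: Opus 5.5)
The plan is to write $\phi_k$ as a product and bound its increments directly. By definition, $\phi_k(a)=L_2\,a\,\|\vd_k(a)\|$ for every $a\in[0,\bar a_k]$; at $a=0$ both sides vanish because $\phi_k(0)=0$ and $\vd_k(0)=0$. Fix $0\leq a<b\leq\bar a_k$ and add and subtract the mixed term $L_2 b\|\vd_k(a)\|$:
\begin{equation*}
  \phi_k(b)-\phi_k(a)
  =L_2(b-a)\|\vd_k(a)\|+L_2 b\bigl(\|\vd_k(b)\|-\|\vd_k(a)\|\bigr).
\end{equation*}

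Next we bound the two pieces separately. In the first piece, $\|\vd_k(a)\|\leq\sup_{0\leq s\leq\bar a_k}\|\vd_k(s)\|$; this supremum is finite because a Lipschitz function on a compact interval is bounded. In the second piece, the reverse triangle inequality gives $\bigl|\|\vd_k(b)\|-\|\vd_k(a)\|\bigr|\leq\|\vd_k(b)-\vd_k(a)\|\leq\Lip(\vd_k;[0,\bar a_k])\,(b-a)$, and we use $b\leq\bar a_k$. Combining these,
\begin{equation*}
  |\phi_k(b)-\phi_k(a)|\leq L_2\Bigl(\sup_{0\leq s\leq\bar a_k}\|\vd_k(s)\|+\bar a_k\Lip(\vd_k;[0,\bar a_k])\Bigr)(b-a).
\end{equation*}
Dividing by $b-a$ and taking the supremum over all pairs $a<b$ in $[0,\bar a_k]$ gives \eqref{eq:ls_sensitivity_reduction}. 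In particular, $\phi_k$ is Lipschitz, and hence continuous.

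Everything here is elementary. The only point that needs attention is the endpoint $a=0$: the representation $\phi_k(a)=L_2a\|\vd_k(a)\|$ must hold there, which it does thanks to the conventions $\phi_k(0)=0$ and $\vd_k(0)=0$. The Lipschitz hypothesis on $\vd_k$ is assumed over the closed interval including $0$, so no separate limiting argument at $0$ is required. The substantive work, which is proving that $\vd_k$ is bounded and Lipschitz with the explicit constants of Lemma~\ref{lem:second_order_merit_regular}, is deferred to the subsequent lemmas.
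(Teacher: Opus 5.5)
Your proposal is correct and follows essentially the same route as the paper: you add and subtract $L_2 b\|\vd_k(a)\|$, apply the reverse triangle inequality together with $b\leq\bar a_k$, and then take the supremum. Your explicit check that $\phi_k(0)=L_2\cdot 0\cdot\|\vd_k(0)\|$ holds by the stated conventions is a small but welcome addition.
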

\begin{proof}
  For distinct $a,b\in[0,\bar a_k]$, the definition of $\phi_k$ and the
  reverse triangle inequality give
  \begin{align*}
    |\phi_k(a)-\phi_k(b)|
    &\leq
    L_2|a-b|\|\vd_k(a)\|
    +L_2b\bigl|\|\vd_k(a)\|-\|\vd_k(b)\|\bigr| \\
    &\leq
    L_2|a-b|\|\vd_k(a)\|
    +L_2b\|\vd_k(a)-\vd_k(b)\|.
  \end{align*}
  Dividing by $|a-b|$, using $b\leq\bar a_k$, and taking the supremum
  over $a$ and $b$ proves~\eqref{eq:ls_sensitivity_reduction}.
\end{proof}

It therefore remains to control the size and variation of $\vd_k$. The next
two lemmas provide the required estimates and their proofs are deferred to
Appendix~\ref{appen:second_order_ls_technical}.

\begin{lemma}[Trial displacement]
  \label{lem:trial_displacement}
  For $0<a\leq\bar a_0$,
  \[
    \|\vd_0(a)\|
    \leq
    2\|\vz_0-\vz^*\|
    +\frac{L_2\bar a_0}{2}\|\vz_0-\vz^*\|^2.
  \]
  For every $k\geq1$ and $0<a\leq\bar a_k$,
  \begin{equation}\label{eq:uniform_trial_displacement}
    \|\vd_k(a)\|
    \leq
    \left(
      3+\left(2\sqrt{2}+\frac92\right)
      L_2\|\vz_0-\vz^*\|\bar a_k
    \right)\|\vz_0-\vz^*\|.
  \end{equation}
\end{lemma}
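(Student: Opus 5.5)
The plan is to bound $\vd_k(a)=\vz_{k+1}(a)-\vz_{k+\frac12}(a)$ by comparing both points with the solution $\vz^*$. For this I will write the trial subproblem \eqref{eq:trial_subproblem} as the inclusion of a strongly monotone operator. Set
\[
\vw(a):=\tfrac{a}{A_k+a}\vz_0+\tfrac{A_k}{A_k+a}\vz_k .
\]
Then $\vz_{k+1}(a)$ is the zero of
\[
\vz\mapsto a\oP_{k,a}(\vz)+a\oH(\vz)+\vz-\vw(a).
\]

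\textbf{Step 1: distance of $\vz_{k+1}(a)$ from $\vz^*$.} Write $\vu:=\vz_{k+1}(a)$. Since $0\in\oF(\vz^*)+\oH(\vz^*)$, the point $\vz^*$ satisfies the perturbed inclusion
\[
a\bigl(\oP_{k,a}(\vz^*)-\oF(\vz^*)\bigr)\in a\oP_{k,a}(\vz^*)+a\oH(\vz^*)+\vz^*-\vz^* .
\]
I will test the two inclusions against each other with $\vu-\vz^*$. Monotonicity of the affine model $\oP_{k,a}$ and of $\oH$ (the Jacobian is monotone since $\oF$ is) then gives
\[
\|\vu-\vz^*\|\le\|\vw(a)-\vz^*\|+a\|\oF(\vz^*)-\oP_{k,a}(\vz^*)\| .
\]
The Taylor bound \eqref{eq:second_order_error} controls the last term by $\tfrac{aL_2}{2}\|\vz^*-\vz_{k+\frac12}(a)\|^2$.

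\textbf{Step 2: distance of $\vz_{k+\frac12}(a)$ from $\vz^*$.} By \eqref{eq:AEP_k+1/2},
\[
\vz_{k+\frac12}(a)=\vw(a)-\tfrac{aA_k}{A_k+a}\,g_k,\qquad g_k:=\oF(\vz_k)+\vv_k .
\]
Because $\tfrac{aA_k}{A_k+a}\le A_k$, Theorem~\ref{thm:aep} gives $\tfrac{aA_k}{A_k+a}\|g_k\|\le A_k\|g_k\|\le 2\|\vz_0-\vz^*\|$ for $k\ge1$. It remains to bound $\|\vw(a)-\vz^*\|\le\max\{\|\vz_0-\vz^*\|,\|\vz_k-\vz^*\|\}$, which needs $\|\vz_k-\vz^*\|\lesssim\|\vz_0-\vz^*\|$ along the AEP iterates. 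I will extract this from the Lyapunov argument. From $V_k\le0$ and monotonicity, $A_k\langle g_k,\vz_k-\vz_0\rangle\le0$, which together with $\langle g_k,\vz_k-\vz^*\rangle\ge0$ is not quite enough by itself. Instead I will use the iterate relation \eqref{eq:aep_weighted_iterate_difference}, written as $A_{k}\vz_{k}=\ldots$, to express $\vz_k-\vz_0$ as a weighted sum of $-a_iA_{i+1}/A_k\,(g_{i+1}-\vr_{i+1}/a_i)$-type terms. Alternatively, and more likely cleanly, I can mimic the anchored-PPM nonexpansiveness: the exact anchored resolvent step satisfies $\|\vz_{k+1}-\vz^*\|\le\|\vw-\vz^*\|\le\max(\|\vz_0-\vz^*\|,\|\vz_k-\vz^*\|)$, and the inexactness $\vr$ contributes via \eqref{eq:aep_square_norm_identity} and Lemma~\ref{lem:stability}, a summable $\tfrac{a_i}{A_{i+1}}\|\vz_{i+1}-\vz_{i+\frac12}\|$ term. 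I expect this to yield $\|\vz_k-\vz^*\|\le\|\vz_0-\vz^*\|\cdot\text{const}$; the constants $3$ and $2\sqrt2$ in \eqref{eq:uniform_trial_displacement} suggest that $\sqrt2\|\vz_0-\vz^*\|$ (from Lemma~\ref{lem:stability} with $\rho=1/\sqrt2$) appears here. This iterate-boundedness step is the main obstacle.

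\textbf{Step 3: combine.} Writing $D=\|\vz_0-\vz^*\|$, Step 2 gives $\|\vz_{k+\frac12}(a)-\vz^*\|\le cD$ with $c$ around $3$. Then
\[
\|\vd_k(a)\|\le\|\vu-\vz^*\|+\|\vz_{k+\frac12}(a)-\vz^*\|,
\]
and Step 1 bounds the first term by $\|\vw(a)-\vz^*\|+\tfrac{aL_2}{2}c^2D^2$. With $a\le\bar a_k$ this gives a bound of the form $(c_1+c_2L_2D\bar a_k)D$. A more careful grouping, measuring the Taylor error at $\vz^*$ through the $\|\vz_k-\vz^*\|$ and $A_k\|g_k\|$ pieces separately, should recover the stated constants. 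The case $k=0$ is simpler: there $A_0=0$ and $\vz_{\frac12}=\vw=\vz_0$. Step 1 then gives $\|\vu-\vz^*\|\le D+\tfrac{aL_2}{2}D^2$, so $\|\vd_0(a)\|\le 2D+\tfrac{L_2\bar a_0}{2}D^2$, exactly the first claim.
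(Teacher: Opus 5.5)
Your $k=0$ case is exactly the paper's argument and is correct. For $k\geq1$, however, two things are left unproved. First, the bound on the iterates is only conjectured. Your second route does work and should be written out. Testing \eqref{eq:AEP_k+1} against $0\in(\oF+\oH)(\vz^*)$ gives $\|\vz_{i+1}-\vz^*\|\leq\|\vw_i-\vz^*\|+\|\vr_{i+1}\|$ with $\vw_i:=\frac{a_i}{A_{i+1}}\vz_0+\frac{A_i}{A_{i+1}}\vz_i$. The $i$th summand of Lemma~\ref{lem:stability} with $\rho=1/\sqrt2$ gives $\|\vr_{i+1}\|\leq\rho\|\vz_{i+1}-\vz_{i+\frac12}\|\leq\frac{a_i}{A_{i+1}}D$, where $D:=\|\vz_0-\vz^*\|$. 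Induction then yields $\|\vz_k-\vz^*\|\leq2D$. The paper proves the analogous bound $\|\vz_k-\vz_0\|\leq3D$ in Lemma~\ref{lem:bounded_trajectory}.

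Second, even granting this, Step~3 does not give the stated inequality. Set $\beta:=a/(A_k+a)$. Your estimates yield $\|\vw(a)-\vz^*\|\leq(2-\beta)D$ and $\|\vz_{k+\frac12}(a)-\vz^*\|\leq(2+\beta)D$. The triangle inequality through $\vz^*$ counts $\|\vw(a)-\vz^*\|$ twice, so you get at best $\|\vd_k(a)\|\leq4D+\frac92L_2\bar a_kD^2$. The leading coefficient exceeds $3$, and ``a more careful grouping'' is precisely the missing argument.

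The paper avoids $\vz^*$ for $k\geq1$. It tests the trial inclusion against the graph point $\oP_{k,a}(\vz_k)+\vv_k\in(\oP_{k,a}+\oH)(\vz_k)$ (Lemma~\ref{lem:trial_displacement_bound}). The displacement is then controlled by two quantities:
\begin{itemize}
\item $\|\vz_{k+\frac12}(a)-\vz_k\|=\beta\|\vz_0-\vz_k-A_kg_k\|\leq\beta\|\vz_0-\vz_k\|\leq3\beta D$, using $V_k\leq0$; this is the source of the $3$.
\item $\|g_k\|\leq2D/A_k\leq2\sqrt2L_2D^2$, which uses $1/A_k\leq\sqrt2L_2D$ from the \emph{lower} acceptance condition at step $k-1$ (see \eqref{eq:A_lower_bound}); this is the source of the $2\sqrt2$.
\end{itemize}

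If you want to keep your comparison with $\vz^*$, use the Step~1 inequality in full. Let $\vx:=\vz_{k+1}(a)-\vz^*$, $\vc:=\vw(a)-\vz^*-ae$ and $e:=\oP_{k,a}(\vz^*)-\oF(\vz^*)$. Then Step~1 states $\|\vx-\tfrac12\vc\|\leq\tfrac12\|\vc\|$, and measuring $\vd_k(a)$ from the center $\vz^*+\tfrac12\vc$ gives
\[
  \|\vd_k(a)\|\leq\|\vw(a)-\vz^*\|+a\|e\|+\tfrac{aA_k}{A_k+a}\|g_k\|\leq(2+\beta)D+\tfrac92aL_2D^2\leq3D+\tfrac92L_2\bar a_kD^2.
\]
This implies the stated bound and does not need the lower acceptance condition.
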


\begin{lemma}[Lipschitz continuity of the trial displacement]
  \label{lem:trial_map_lipschitz}
  For every $k\geq1$, with $\vd_k(0)=0$ as in
  \eqref{eq:trial_displacement_map},
  \begin{equation}\label{eq:trial_map_lipschitz}
    \Lip(\vd_k;[0,\bar a_k])
    \leq L_2\|\vz_0-\vz^*\|^2\left(
      16\sqrt{2}+18
      +\left(6\sqrt{2}+\frac{27}{2}\right)
        L_2\|\vz_0-\vz^*\|\bar a_k
    \right).
  \end{equation}
\end{lemma}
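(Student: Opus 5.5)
The plan is to write the trial displacement in closed form relative to the fixed data $(\vz_0,\vz_k,g_k)$ with $g_k:=\oF(\vz_k)+\vv_k$, and then compare the subproblem solutions at two step sizes $a<b$. From \eqref{eq:trial_identity} and the subproblem \eqref{eq:trial_subproblem}, $\vz_{k+1}(a)$ is the resolvent-type solution of
\[
0\in \vd+ a\bigl(\oF(\vz_{k+\frac12}(a))+D\oF(\vz_{k+\frac12}(a))\vd\bigr)+a\oH(\vz_{k+\frac12}(a)+\vd)-\tfrac{aA_k}{A_k+a}g_k ,
\]
with unknown $\vd=\vd_k(a)$. Here $\vz_{k+\frac12}(a)=\frac{a}{A_k+a}\vz_0+\frac{A_k}{A_k+a}(\vz_k-ag_k)$ is explicitly Lipschitz in $a$. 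Its derivative is bounded by a combination of $\|\vz_0-\vz_k\|/A_k$, $\|g_k\|$ and $a\|g_k\|/A_k$. The quantities $\|\vz_k-\vz^*\|$ and $\|g_k\|\le 2\|\vz_0-\vz^*\|/A_k$ (Theorem~\ref{thm:aep}) are controlled along the accepted AEP iterates. I will take $\|\vz_k-\vz_0\|\le 2\|\vz_0-\vz^*\|$, following from $V_k\le 0$ as in \eqref{eq:ppm_residual_inner_product}. I will also use $A_k\|g_k\|\le 2\|\vz_0-\vz^*\|$.

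First, I subtract the two inclusions at $a$ and $b$ and pair the difference with $\vz_{k+1}(a)-\vz_{k+1}(b)$. Monotonicity of $\oH$ removes the set-valued term, provided both inclusions are first divided by the step so that the $\oH$ parts appear with unit coefficient. Monotonicity of the linear map $D\oF(\vz_{k+\frac12}(a))$ controls the Jacobian term at a common base point, and the strong monotonicity supplied by the identity term gives a contraction. This yields $\|\vz_{k+1}(a)-\vz_{k+1}(b)\|\le b\,\|\Delta\|$. Here $\Delta$ collects the $1/a$ versus $1/b$ mismatch in the anchored center, the change of center $\frac{a}{A_k+a}\vz_0+\frac{A_k}{A_k+a}\vz_k$, and the change of Taylor model from base $\vz_{k+\frac12}(a)$ to $\vz_{k+\frac12}(b)$. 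The model change I bound using Assumption~\ref{assum:Lips_Jacobian}: the difference of the two affine models evaluated at a common point $\vz$ is at most $\frac{L_2}{2}\|\vz_{k+\frac12}(a)-\vz_{k+\frac12}(b)\|\bigl(\|\vz-\vz_{k+\frac12}(a)\|+\|\vz-\vz_{k+\frac12}(b)\|\bigr)$. The factors $\|\vd_k\|$ appearing here are already bounded uniformly by Lemma~\ref{lem:trial_displacement}, and this is where the $L_2\|\vz_0-\vz^*\|\bar a_k$ term enters.

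Second, I combine this with $\vd_k=\vz_{k+1}-\vz_{k+\frac12}$ and the triangle inequality to get $\|\vd_k(a)-\vd_k(b)\|\le C|a-b|$. Tracking constants gives the form $L_2\|\vz_0-\vz^*\|^2(\alpha+\beta L_2\|\vz_0-\vz^*\|\bar a_k)$. To express the first-order terms, which involve no $L_2$, in this $L_2$-scaled form, I expect to use $\bar a_k\ge\sqrt{2(1+\sqrt2)/(L_2\varepsilon)}$ together with $\varepsilon<\|g_k\|$ before termination. Alternatively, if the natural bound comes out as $\|\vz_0-\vz^*\|/A_k$-type terms, I will convert those using the same quantities.

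The near-zero endpoint, continuity at $a=0$ with $\vd_k(0)=0$, is handled separately. As $a\to0$ the inclusion gives $\vd_k(a)=O(a)$ with a constant $\|g_k\|+\|\oF\|$-type bound. I must show this constant is itself of the claimed form, which again uses the relation between $A_k$, $\|g_k\|$ and $\bar a_k$.

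The main obstacle is exactly this bookkeeping. The $1/a$ normalization in the anchored center and the dependence of the Taylor base point on $a$ produce terms that blow up like $1/a$ or $1/A_k$. These must cancel or be absorbed via the identity \eqref{eq:trial_identity}, i.e.\ by working with $\vd$ instead of $\vz_{k+1}$ and dividing by the step before invoking monotonicity. I would try that normalization first.
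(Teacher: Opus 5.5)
Your skeleton matches the paper's (Lemmas~\ref{lem:trial_sensitivity}--\ref{lem:trial_center_gap}). You compare the two trial inclusions through one monotone operator and split the mismatch into three pieces: the anchor-weight change $(\alpha(a)-\alpha(b))(\vz_0-\vz_k)$ with $\alpha(t)=t/(A_k+t)$, the change of Taylor base point, and $\frac{b-a}{a}\bigl(\vc(a)-\vz_{k+1}(a)\bigr)$ with $\vc(a)=\frac{a}{A_k+a}\vz_0+\frac{A_k}{A_k+a}\vz_k$. You then add the predictor difference $\vz_{k+\frac12}(a)-\vz_{k+\frac12}(b)$.

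The gap is in bringing the first-order terms into the $L_2$-scaled form of \eqref{eq:trial_map_lipschitz}. Every such term has size $\|\vz_0-\vz^*\|/A_k$:
\begin{itemize}
\item $|\alpha(a)-\alpha(b)|\le|a-b|/A_k$. Since $\alpha'(0)=1/A_k$, the predictor map's Lipschitz modulus genuinely carries this factor.
\item $\|\vg_k\|\le2\|\vz_0-\vz^*\|/A_k$.
\item The bound $\|\vc(a)-\vz_{k+1}(a)\|=O(a)$ needed to absorb the $1/a$. Here $\vc(a)-\vz_{k+1}(a)=\frac{aA_k}{A_k+a}\vg_k-\vd_k(a)$ and $\|\vd_k(a)\|\le 3\|\vz_0-\vz^*\|\frac{a}{A_k+a}+\dots$.
\end{itemize}
You therefore need an \emph{upper} bound on $1/A_k$, and your proposed conversion cannot supply one. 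The relation $\varepsilon<\|\vg_k\|\le2\|\vz_0-\vz^*\|/A_k$ bounds $A_k$ from \emph{above}. Both entries in the definition of $\bar a_k$ stay bounded (by $4\|\vz_0-\vz^*\|/\varepsilon$ and a constant) as $A_k\downarrow0$, while $1/A_k\to\infty$. So no combination of $\varepsilon$, $\|\vg_k\|$ and $\bar a_k$ controls $1/A_k$.

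The missing idea is to use the fact that iteration $k-1$ was \emph{accepted}. Combine three facts:
\begin{itemize}
\item its lower acceptance condition $L_2a_{k-1}\|\vz_k-\vz_{k-\frac12}\|\ge1$;
\item the single $(k-1)$th term of \eqref{eq:aep_stability} with $\rho=1/\sqrt2$, namely $\|\vz_k-\vz_{k-\frac12}\|\le\sqrt2\|\vz_0-\vz^*\|a_{k-1}/A_k$;
\item $a_{k-1}\le A_k$.
\end{itemize}
Together these give $1/A_k\le\sqrt2L_2\|\vz_0-\vz^*\|$, which is \eqref{eq:A_lower_bound}. This is exactly why the lemma is claimed only for $k\ge1$. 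With it, $|\alpha(a)-\alpha(b)|\le\sqrt2L_2\|\vz_0-\vz^*\||a-b|$ and $\|\vc(a)-\vz_{k+1}(a)\|\le(10\sqrt2+\frac92)L_2\|\vz_0-\vz^*\|^2a$, and the stated constants follow.

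Two smaller issues:
\begin{itemize}
\item $V_k\le0$ does not imply $\|\vz_k-\vz_0\|\le2\|\vz_0-\vz^*\|$. It yields only $\|\vz_0-\vz_k-A_k\vg_k\|\le\|\vz_0-\vz_k\|$ and a bound on $\vg_k$. For instance, if $\vg_k=0$ it holds wherever $\vz_k$ sits in a possibly unbounded solution set. The trajectory bound must instead be proved by induction over the accepted updates, as in Lemma~\ref{lem:bounded_trajectory}. That gives the factor $3$ on which the stated constants are built.
\item At $a=0$ you only need $\vd_k(a)\to0$, which is immediate from \eqref{eq:trial_displacement_bound}. The estimate on $(0,\bar a_k]$ then extends by continuity, with no separate $O(a)$ constant required at the endpoint.
\end{itemize}
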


\begin{proof}[Proof of Lemma~\ref{lem:second_order_merit_regular}]
  We first consider the initial iteration $k=0$. Since $A_0=0$, we have
  $\vz_{1/2}(a)=\vz_0$ for every $a>0$. Thus, the affine model
  $\oP_{0,a}$ is independent of $a$; denote this common model by $\oP_0$.
  The trial subproblem then reduces to
  \begin{equation*}
    0\in a(\oP_0+\oH)(\vz_1(a))+\vz_1(a)-\vz_0.
  \end{equation*}
  For $0<a\leq b$, the resolvent comparison argument in
  \cite[proof of Lemma~4.3(b)]{monteiro2012iteration} gives
  \begin{equation*}
    \|\vz_1(a)-\vz_1(b)\|
    \leq\frac{b-a}{b}\|\vz_1(b)-\vz_0\|.
  \end{equation*}
  Since $\vd_0(t)=\vz_1(t)-\vz_0$, it follows that
  \begin{equation*}
    \|\vd_0(a)-\vd_0(b)\|
    \leq\frac{b-a}{b}\|\vd_0(b)\|.
  \end{equation*}
  Therefore, by the definition of $\phi_0$ and the reverse triangle
  inequality,
  \begin{equation*}
    |\phi_0(a)-\phi_0(b)|
    \leq L_2a\|\vd_0(a)-\vd_0(b)\|
      +L_2(b-a)\|\vd_0(b)\| \leq 2L_2(b-a)
      \sup_{0<a\leq\bar a_0}\|\vd_0(a)\|.
  \end{equation*}
  The same bound holds when $a=0$, directly from $\phi_0(0)=0$.
  Lemma~\ref{lem:trial_displacement} now yields
  \[
    \Lip(\phi_0;[0,\bar a_0])
    \leq
    L_2\|\vz_0-\vz^*\|
    \left(4+L_2\|\vz_0-\vz^*\|\bar a_0\right)
    \leq
    29L_2\|\vz_0-\vz^*\|
    \left(1+L_2\|\vz_0-\vz^*\|\bar a_0\right)^2.
  \]

  Now suppose that $k\geq1$.  For brevity, set
  \[
    x_k:=L_2\|\vz_0-\vz^*\|\bar a_k.
  \]
   Substituting
  \eqref{eq:uniform_trial_displacement} and
  \eqref{eq:trial_map_lipschitz} into
  \eqref{eq:ls_sensitivity_reduction} gives
  \[
    \Lip(\phi_k;[0,\bar a_k])
    \leq L_2\|\vz_0-\vz^*\|
    \left(
      3
      +\left(18\sqrt{2}+\frac{45}{2}\right)x_k
      +\left(6\sqrt{2}+\frac{27}{2}\right)x_k^2
    \right).
  \]
  Since $x_k\geq0$, $18\sqrt{2}+45/2<48$, and
  $6\sqrt{2}+27/2<22$, the last display is bounded by
  $29L_2\|\vz_0-\vz^*\|(1+x_k)^2$. This
  proves~\eqref{eq:merit_lipschitz_bound} for
  every $k\geq0$.

  It remains to bound the upper bound $\bar a_k$. For $k\geq1$,
  Theorem~\ref{thm:aep} gives
  \[
    A_k\|\oF(\vz_k)+\vv_k\|\leq2\|\vz_0-\vz^*\|.
  \]
  The same inequality is trivial for $k=0$ because $A_0=0$. Therefore,
  \[
    \frac{2A_k\|\oF(\vz_k)+\vv_k\|}{\varepsilon}
    \leq\frac{4\|\vz_0-\vz^*\|}{\varepsilon}.
  \]
  Substituting this estimate into the definition in~\eqref{eq:ls_cap} proves
  \eqref{eq:ls_cap_upper_bound}.
\end{proof}

\section{Technical Lemmas for the Line Search}
\label{appen:second_order_ls_technical}

\subsection{Proof of Lemma~\ref{lem:trial_displacement}}

We first present the following lemma, which relates the displacement $\|\vd_k(a)\|$ to $\|\vz_{k+1/2}(a)-\vz_k\|$ and the current residual $\|\oF(\vz_k)+\vv_k\|$.  
\begin{lemma}
\label{lem:trial_displacement_bound}
Recall that $\vd_k(a):=\vz_{k+1}(a)-\vz_{k+1/2}(a)$. For any $a>0$, it holds that 
\begin{equation}\label{eq:trial_displacement_bound}
    \|\vd_k(a)\|\leq \|\vz_{k+1/2}(a)-\vz_k\|
       +\frac{a^2}{A_k+a}\|\oF(\vz_k)+\vv_k\|+\frac{L_2a}{2}
       \|\vz_{k+1/2}(a)-\vz_k\|^2.
\end{equation}
\end{lemma}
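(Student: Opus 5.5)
The plan is to view $\vz_{k+1}(a)$ as a resolvent output and compare it with $\vz_k$, which is the exact output of the same resolvent at a perturbed input.

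\textbf{Setup.} Write $\vz_{k+\frac12}=\vz_{k+\frac12}(a)$, $\vg_k:=\oF(\vz_k)+\vv_k$ and $\vw:=\frac{a}{A_k+a}\vz_0+\frac{A_k}{A_k+a}\vz_k$.
\begin{itemize}
\item Since $\oF$ is monotone and differentiable, $D\oF(\vz_{k+\frac12})$ is a monotone linear map. Hence the affine model $\oP_{k,a}$ is monotone and continuous, and $\oP_{k,a}+\oH$ is maximally monotone.
\item Therefore $\oJ:=(\oI+a(\oP_{k,a}+\oH))^{-1}$ is single-valued and firmly nonexpansive.
\item By \eqref{eq:trial_subproblem}, $\vz_{k+1}(a)=\oJ(\vw)$.
\item Since $\vv_k\in\oH(\vz_k)$, we also have $\vz_k=\oJ(\vw')$ with $\vw':=\vz_k+a\bigl(\oP_{k,a}(\vz_k)+\vv_k\bigr)$.
\end{itemize}

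\textbf{Decomposing the input difference.} As in the proof of Lemma~\ref{lem:ls_residual}, \eqref{eq:AEP_k+1/2} gives $\vw=\vz_{k+\frac12}+\frac{aA_k}{A_k+a}\vg_k$. Using $\frac{aA_k}{A_k+a}-a=-\frac{a^2}{A_k+a}$, we get
\[
  \vw-\vw'=\bm{\delta}+\vu,\qquad
  \bm{\delta}:=\vz_{k+\frac12}-\vz_k,\qquad
  \vu:=-\frac{a^2}{A_k+a}\vg_k-a\bigl(\oP_{k,a}(\vz_k)-\oF(\vz_k)\bigr).
\]
The Taylor bound \eqref{eq:second_order_error}, applied at $\vz=\vz_k$ with center $\vz_{k+\frac12}$, gives
\[
  \|\vu\|\leq\frac{a^2}{A_k+a}\|\vg_k\|+\frac{L_2a}{2}\|\bm{\delta}\|^2.
\]
So it suffices to show $\|\vd_k(a)\|\leq\|\bm{\delta}\|+\|\vu\|$.

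\textbf{Key estimate (the main obstacle).} Set $\vc:=\oJ(\vw)-\oJ(\vw')$, so that $\vd_k(a)=\vc-\bm{\delta}$.

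A naive triangle inequality, or using only nonexpansiveness of $\oJ$ or of $\oI-\oJ$, loses a factor $2$ on either $\|\bm{\delta}\|$ or $\|\vu\|$. Firm nonexpansiveness in inner-product form avoids this loss: it gives $\langle\vc,\vw-\vw'\rangle\geq\|\vc\|^2$. Expanding,
\[
  \|\vc-\bm{\delta}\|^2=\|\vc\|^2-2\langle\vc,\bm{\delta}+\vu\rangle+2\langle\vc,\vu\rangle+\|\bm{\delta}\|^2
  \leq-\|\vc\|^2+2\langle\vc,\vu\rangle+\|\bm{\delta}\|^2
  \leq\|\vu\|^2+\|\bm{\delta}\|^2.
\]
Hence $\|\vd_k(a)\|\leq\sqrt{\|\bm{\delta}\|^2+\|\vu\|^2}\leq\|\bm{\delta}\|+\|\vu\|$. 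Combining this with the bound on $\|\vu\|$ yields \eqref{eq:trial_displacement_bound}.

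The only points to check carefully are the sign bookkeeping in $\vw-\vw'$ and that the Taylor remainder is taken with the correct center. The case $k=0$, where $A_k=0$ and $\vz_{\frac12}=\vz_0$, is covered by the same argument.
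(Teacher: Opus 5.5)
Your proof is correct and follows essentially the same route as the paper. Your firm-nonexpansiveness inequality $\langle \vc,\vw-\vw'\rangle\geq\|\vc\|^2$ is a rewriting of the monotonicity of $\oP_{k,a}+\oH$ at the two graph points over $\vz_{k+1}(a)$ and $\vz_k$ that the paper uses, and your expansion of $\|\vc-\bm{\delta}\|^2$ with $2\langle\vc,\bm{u}\rangle-\|\vc\|^2\leq\|\bm{u}\|^2$ is the paper's three-point identity plus Young step, so both reach $\|\vd_k(a)\|^2\leq\|\vz_{k+1/2}(a)-\vz_k\|^2+\|\bm{u}\|^2$ and finish with $\sqrt{x^2+y^2}\leq x+y$, the triangle inequality, and the Taylor remainder at $\vz_k$.
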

\begin{proof}
Replacing $a_k$ by $a$ in~\eqref{eq:AEP_k+1/2} gives
\begin{equation}\label{eq:zk_plus_half}
    \vz_{k+1/2}(a)
    =\frac{a}{A_k+a}\vz_0
       +\frac{A_k}{A_k+a}
          \bigl(\vz_k-a(\oF(\vz_k)+\vv_k)\bigr).
\end{equation}
The trial subproblem in~\eqref{eq:trial_subproblem} also gives
\begin{equation*}
    \vz_{k+1}(a)
    \in\frac{a}{A_k+a}\vz_0+\frac{A_k}{A_k+a}\vz_k
       -a\oP_{k,a}(\vz_{k+1}(a))-a\oH(\vz_{k+1}(a)).
\end{equation*}
Combining these two relations yields
\begin{equation*}
    \frac{\vz_{k+1/2}(a)-\vz_{k+1}(a)}{a}
       +\frac{A_k}{A_k+a}(\oF(\vz_k)+\vv_k)
    \in(\oP_{k,a}+\oH)(\vz_{k+1}(a)).
\end{equation*}
Since $\vv_k\in\oH(\vz_k)$, we also have
\begin{equation*}
    \oP_{k,a}(\vz_k)+\vv_k\in(\oP_{k,a}+\oH)(\vz_k).
\end{equation*}
Monotonicity of $\oP_{k,a}+\oH$, applied to these two graph points at $\vz_{k+1}(a)$ and $\vz_k$, leads to
\begin{align*}
    0
    &\leq\left\langle
       \vz_{k+1/2}(a)-\vz_{k+1}(a),
       \vz_{k+1}(a)-\vz_k
    \right\rangle\\
    &\phantom{{}\leq{}} -a\left\langle
       \frac{a}{A_k+a}(\oF(\vz_k)+\vv_k)
          +(\oP_{k,a}(\vz_k)-\oF(\vz_k)),
       \vz_{k+1}(a)-\vz_k
    \right\rangle.
\end{align*}
The three-point identity gives
\begin{align*}
    &\phantom{{}={}}\left\langle
       \vz_{k+1/2}(a)-\vz_{k+1}(a),
       \vz_{k+1}(a)-\vz_k
    \right\rangle\\
    &=\frac12\|\vz_{k+1/2}(a)-\vz_k\|^2
       -\frac12\|\vz_{k+1}(a)-\vz_{k+1/2}(a)\|^2
       -\frac12\|\vz_{k+1}(a)-\vz_k\|^2.
\end{align*}
By Cauchy--Schwarz and Young's inequality,
\begin{align*}
    &-a\left\langle
       \frac{a}{A_k+a}(\oF(\vz_k)+\vv_k)
          +(\oP_{k,a}(\vz_k)-\oF(\vz_k)),
       \vz_{k+1}(a)-\vz_k
    \right\rangle\\
    &\quad\leq
       a\left\|
          \frac{a}{A_k+a}(\oF(\vz_k)+\vv_k)
             +(\oP_{k,a}(\vz_k)-\oF(\vz_k))
       \right\|\|\vz_{k+1}(a)-\vz_k\|\\
    &\quad\leq
       \frac{a^2}{2}\left\|
          \frac{a}{A_k+a}(\oF(\vz_k)+\vv_k)
             +(\oP_{k,a}(\vz_k)-\oF(\vz_k))
       \right\|^2
       +\frac12\|\vz_{k+1}(a)-\vz_k\|^2.
\end{align*}
Combining the preceding three displays and canceling
$\frac12\|\vz_{k+1}(a)-\vz_k\|^2$ gives
\begin{align*}
    \|\vz_{k+1}(a)-\vz_{k+1/2}(a)\|^2
    &\leq\|\vz_{k+1/2}(a)-\vz_k\|^2\\
    &\quad+a^2\left\|
       \frac{a}{A_k+a}(\oF(\vz_k)+\vv_k)
          +(\oP_{k,a}(\vz_k)-\oF(\vz_k))
    \right\|^2.
\end{align*}
Taking square roots and using
$\sqrt{x^2+y^2}\leq x+y$ for $x,y\geq0$, we obtain
\begin{equation*}
    \|\vd_k(a)\| \leq\|\vz_{k+1/2}(a)-\vz_k\|+a\left\|
       \frac{a}{A_k+a}(\oF(\vz_k)+\vv_k)
          +(\oP_{k,a}(\vz_k)-\oF(\vz_k))
    \right\|.
\end{equation*}
Finally, the triangle inequality and the Taylor remainder bound imply
\begin{equation*}
    \left\|
       \frac{a}{A_k+a}(\oF(\vz_k)+\vv_k)
          +(\oP_{k,a}(\vz_k)-\oF(\vz_k))
    \right\| \leq\frac{a}{A_k+a}\|\oF(\vz_k)+\vv_k\| +\frac{L_2}{2}
       \|\vz_{k+1/2}(a)-\vz_k\|^2.
\end{equation*}
Substitution into the previous estimate proves
\eqref{eq:trial_displacement_bound}.
\end{proof}

We now establish the two uniform trajectory estimates used to bound
$\vd_k$.

\begin{lemma}[Uniform trajectory bounds]
\label{lem:bounded_trajectory}
Every outer iterate generated before termination satisfies
\begin{equation}\label{eq:bounded_trajectory}
    \|\vz_0-\vz_k-A_k(\oF(\vz_k)+\vv_k)\|
    \leq3\|\vz_0-\vz^*\|, \quad
    \|\vz_k-\vz_0\|\leq3\|\vz_0-\vz^*\|.
\end{equation}
\end{lemma}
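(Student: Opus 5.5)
Write $g_k:=\oF(\vz_k)+\vv_k$. The plan is to use only the Lyapunov analysis from the proof of Theorem~\ref{thm:aep}, namely $V_k\leq V_0=0$ from Lemma~\ref{lem:Lyapunov_AEP}. All accepted outer steps before termination satisfy the relative-error condition with $\rho=1/\sqrt2$, so that lemma applies to them. For $k=0$ both bounds are trivial because $A_0=0$ and $\vz_0-\vz_0=0$. So fix $k\geq1$.

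The key step is to show that $V_k\leq0$, together with monotonicity at $\vz^*$, puts $\vz_k$ and $\vz_0-A_kg_k$ in a ball around $\vz^*$. Set $\vw_k:=\vz_0-A_k g_k$. Expanding the square gives
\[
\tfrac12\|\vz_k-\vw_k\|^2=\tfrac12\|\vz_k-\vz_0\|^2+V_k\leq \tfrac12\|\vz_k-\vz_0\|^2,
\]
since $A_k\langle g_k,\vz_k-\vz_0\rangle+\tfrac{A_k^2}{2}\|g_k\|^2=V_k$. This step alone is not enough, so we also use monotonicity at the solution: $\langle g_k,\vz_k-\vz^*\rangle\geq0$. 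This gives
\[
\|\vz_k-\vz^*\|^2\leq\|\vz_k-\vz^*\|^2+2A_k\langle g_k,\vz_k-\vz^*\rangle .
\]
We combine this with $V_k\leq0$ by writing out $\|\vz_k-\vz^*+A_kg_k\|^2$. Substituting $\vz_k-\vz^*=(\vz_k-\vz_0)+(\vz_0-\vz^*)$, it equals $\|\vz_0-\vz^*\|^2$ plus terms controlled by $2V_k\leq0$. More concretely, the plan is to establish
\[
\|\vz_k-\vz_0+A_kg_k\|^2+\|\vz_k-\vz_0\|^2\leq\|\vz_0-\vz^*\|^2\quad\text{(up to constants)}.
\]
To get there, we use $-V_k\geq0$ in the form $A_k\langle g_k,\vz_0-\vz_k\rangle\geq \tfrac{A_k^2}{2}\|g_k\|^2$. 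We also use $\langle g_k,\vz_0-\vz_k\rangle\leq\langle g_k,\vz_0-\vz^*\rangle$, exactly as in the proof of Proposition~\ref{prop:anchored_PPM}.

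Two bounds then follow. First, the residual bound $A_k\|g_k\|\leq2\|\vz_0-\vz^*\|$ from Theorem~\ref{thm:aep}. Second, a bound on $\|\vz_k-\vz_0\|$. For the latter, take the inequality $\langle g_k,\vz_k-\vz^*\rangle\geq0$ and multiply by $A_k$. Using $V_k\leq0$, i.e.\ $A_k\langle g_k,\vz_k-\vz_0\rangle\leq-\tfrac{A_k^2}{2}\|g_k\|^2$, we get
\[
\|\vz_k-\vz^*\|^2\leq \|\vz_k-\vz^*\|^2+2A_k\langle g_k,\vz_k-\vz^*\rangle \leq \|\vz_k-\vz^*+A_kg_k\|^2 .
\]
This chain is the step I expect to be delicate. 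The fallback route is cruder. From $\|\vz_k-\vw_k\|\leq\|\vz_k-\vz_0\|$, the triangle inequality gives $A_k\|g_k\|\leq 2\|\vz_k-\vz_0\|$, which is circular. So it is better to bound $\|\vz_k-\vz^*\|$ directly from $V_k\leq 0$ and monotonicity. We have $\|\vz_k-\vz^*\|^2=\|\vz_k-\vz_0\|^2+2\langle\vz_k-\vz_0,\vz_0-\vz^*\rangle+\|\vz_0-\vz^*\|^2$. We then try to show $\|\vz_k-\vz^*\|\leq 2\|\vz_0-\vz^*\|$ by a Halpern-type argument: $\vz_k$ is nearly a resolvent image of a convex combination of $\vz_0$ and earlier points, so an inductive nonexpansiveness bound may be needed if the one-shot Lyapunov argument fails. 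Given $\|\vz_k-\vz^*\|\leq2\|\vz_0-\vz^*\|$, the triangle inequality yields $\|\vz_k-\vz_0\|\leq3\|\vz_0-\vz^*\|$.

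The first bound in \eqref{eq:bounded_trajectory} also follows from the triangle inequality:
\[
\|\vz_0-\vz_k-A_kg_k\|\leq\|\vz_k-\vz_0\|+A_k\|g_k\|.
\]
Given the target constant $3$, the sharper route is to use $\|\vz_0-\vz_k-A_kg_k\|=\|\vz_k-\vw_k\|\leq\|\vz_k-\vz_0\|$, which was derived from $V_k\leq0$ above. With this, the first bound reduces to the second, and $3\|\vz_0-\vz^*\|$ follows for both. The main obstacle is therefore proving $\|\vz_k-\vz_0\|\leq 3\|\vz_0-\vz^*\|$, which requires combining $V_k\leq0$ with monotonicity at $\vz^*$, or failing that an induction over iterations.
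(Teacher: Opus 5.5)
\textbf{There is a genuine gap: the second bound, $\|\vz_k-\vz_0\|\le3\|\vz_0-\vz^*\|$, is never proved, and the route you commit to cannot prove it.} Your reduction of the first bound to the second is fine and matches the paper: $2V_k=\|\vz_0-\vz_k-A_kg_k\|^2-\|\vz_0-\vz_k\|^2\le 0$. The trouble is the second bound, which is the real content of the lemma.

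At step $k$ you only use three facts: $V_k\le0$, $\langle g_k,\vz_k-\vz^*\rangle\ge0$, and $A_k\|g_k\|\le2\|\vz_0-\vz^*\|$. All three hold with $g_k=0$ and $\vz_k$ arbitrary (take $\oF=0$, $\oH=0$, $\vv_k=0$). So no combination of them can bound $\|\vz_k-\vz_0\|$. Your chain $\|\vz_k-\vz^*\|^2\le\|\vz_k-\vz^*+A_kg_k\|^2$ is true, but it bounds one uncontrolled quantity by another. The target ``$\|\vz_k-\vz_0+A_kg_k\|^2+\|\vz_k-\vz_0\|^2\le\|\vz_0-\vz^*\|^2$ up to constants'' does not follow from your ingredients.

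An induction over iterations, using how $\vz_k$ is produced from $\vz_{k-1}$, is unavoidable. You mention it only as a fallback, and you omit the ingredient that makes it work for an inexact method: a per-step bound on the inexactness. That ingredient is the individual $(k-1)$th term of Lemma~\ref{lem:stability} with $\rho=1/\sqrt2$. It gives $\|\vz_k-\vz_{k-\frac12}\|\le\sqrt2\|\vz_0-\vz^*\|a_{k-1}/A_k$, and hence $\|\vr_k\|\le\|\vz_0-\vz^*\|a_{k-1}/A_k$.

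With that bound, your Halpern-type idea does close. Since $\vz_k=\oJ_{a_{k-1}(\oF+\oH)}\bigl(\frac{a_{k-1}}{A_k}\vz_0+\frac{A_{k-1}}{A_k}\vz_{k-1}+\vr_k\bigr)$, and the resolvent is nonexpansive and fixes $\vz^*$, you get
\[
\|\vz_k-\vz^*\|\le\frac{A_{k-1}}{A_k}\|\vz_{k-1}-\vz^*\|+\frac{2a_{k-1}}{A_k}\|\vz_0-\vz^*\|.
\]
Induction then gives $\|\vz_k-\vz^*\|\le2\|\vz_0-\vz^*\|$, and so $\|\vz_k-\vz_0\|\le3\|\vz_0-\vz^*\|$.

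The paper instead runs the induction directly on $\vz_k-\vz_0$, using the identity $\vz_k-\vz_0=\frac{A_{k-1}}{A_k}(\vz_{k-1}-\vz_0)-a_{k-1}g_k+a_{k-1}\bigl(\oF(\vz_k)-\oP_{k-1}(\vz_k)\bigr)$. It bounds $a_{k-1}\|g_k\|$ by Theorem~\ref{thm:aep} and the model error by the same stability term, which gives the weighted increment $3\|\vz_0-\vz^*\|a_{k-1}/A_k$. Either way, what your proposal lacks is the convex-combination recursion with weights $\frac{a_{k-1}}{A_k}+\frac{A_{k-1}}{A_k}=1$, together with the stability bound on the error term.
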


\begin{proof}
For $k\geq1$, the update rule at accepted step $k-1$ gives
\begin{equation*}
    \vz_k-\vz_0=\frac{A_{k-1}}{A_k}(\vz_{k-1}-\vz_0)
       -a_{k-1}(\oF(\vz_k)+\vv_k) +a_{k-1}(\oF(\vz_k)-\oP_{k-1}(\vz_k)).
\end{equation*}
By Theorem~\ref{thm:aep}, we have
\begin{equation*}
    \|\oF(\vz_k)+\vv_k\|
    \leq\frac{2\|\vz_0-\vz^*\|}{A_k}.
\end{equation*}
The individual $(k-1)$th term in the stability estimate
\eqref{eq:aep_stability}, with $\rho=1/\sqrt{2}$, gives
\begin{equation*}
    \|\vz_k-\vz_{k-\frac12}\|
    \leq\frac{\sqrt{2}\|\vz_0-\vz^*\|a_{k-1}}{A_k}.
\end{equation*}
Moreover, \eqref{eq:trial_error_merit} and the upper acceptance
condition in~\eqref{eq:ls_acceptance_window} imply
\begin{equation*}
    a_{k-1}\|\oF(\vz_k)-\oP_{k-1}(\vz_k)\|
    \leq \frac{1}{\sqrt{2}}\|\vz_k-\vz_{k-\frac12}\|
    \leq \frac{\|\vz_0-\vz^*\|a_{k-1}}{A_k}.
\end{equation*}
Consequently,
\begin{equation*}
    \|\vz_k-\vz_0\|
    \leq
    \frac{A_{k-1}}{A_k}\|\vz_{k-1}-\vz_0\|
    +\frac{a_{k-1}}{A_k}\,3\|\vz_0-\vz^*\|.
\end{equation*}
Since $A_k=A_{k-1}+a_{k-1}$ and $\|\vz_0-\vz_0\|=0$, induction proves
the second inequality in~\eqref{eq:bounded_trajectory}. For the first,
the Lyapunov descent property gives $V_k\leq V_0=0$, while
\eqref{eq:Lyapunov} gives
\begin{equation*}
    2V_k=
    \|\vz_0-\vz_k-A_k(\oF(\vz_k)+\vv_k)\|^2
    -\|\vz_0-\vz_k\|^2.
\end{equation*}
Thus we obtain $\|\vz_0-\vz_k-A_k(\oF(\vz_k)+\vv_k)\|\leq \|\vz_0-\vz_k\|$, which is further bounded
by $3\|\vz_0-\vz^*\|$.
\end{proof}

\begin{proof}[Proof of Lemma~\ref{lem:trial_displacement}]
We distinguish the initial iteration from the subsequent ones.

Suppose first that $k=0$. Then $A_0=0$ and
$\vz_{1/2}(a)=\vz_0$. Let $\vv^*\in\oH(\vz^*)$ satisfy
$\oF(\vz^*)+\vv^*=0$. By the trial subproblem,
\begin{equation*}
    \frac{\vz_0-\vz_1(a)}{a}
    \in(\oP_{0,a}+\oH)(\vz_1(a)),
\end{equation*}
whereas
$\oP_{0,a}(\vz^*)+\vv^*\in(\oP_{0,a}+\oH)(\vz^*)$.
Monotonicity and the Cauchy--Schwarz inequality therefore yield
\begin{align*}
    \|\vz_1(a)-\vz^*\|^2
    &\leq\left\langle
       \vz_0-\vz^*
       -a\bigl(\oP_{0,a}(\vz^*)-\oF(\vz^*)\bigr),
       \vz_1(a)-\vz^*
    \right\rangle \\
    &\leq
       \left(\|\vz_0-\vz^*\|
       +a\|\oP_{0,a}(\vz^*)-\oF(\vz^*)\|\right)
       \|\vz_1(a)-\vz^*\|.
\end{align*}
It follows from the Taylor remainder bound that
\begin{equation*}
    \|\vz_1(a)-\vz^*\|
    \leq \|\vz_0-\vz^*\|
       +a\|\oP_{0,a}(\vz^*)-\oF(\vz^*)\|
    \leq \|\vz_0-\vz^*\|
       +\frac{L_2a}{2}\|\vz_0-\vz^*\|^2.
\end{equation*}
Consequently, for $0<a\leq\bar a_0$,
\begin{equation}\label{eq:trial_displacement_k0}
    \|\vd_0(a)\|
    =\|\vz_1(a)-\vz_0\|
    \leq2\|\vz_0-\vz^*\|
       +\frac{L_2\bar a_0}{2}\|\vz_0-\vz^*\|^2.
\end{equation}

Now let $k\geq1$. Because iteration $k-1$ was accepted, the lower
acceptance condition in~\eqref{eq:ls_acceptance_window} and the
individual $(k-1)$th term in~\eqref{eq:aep_stability} imply
\begin{equation}\label{eq:A_lower_bound}
    1
    \leq L_2a_{k-1}\|\vz_k-\vz_{k-\frac12}\|
    \leq \sqrt{2}L_2\|\vz_0-\vz^*\|
       \frac{a_{k-1}^2}{A_k}
    \leq \sqrt{2}L_2\|\vz_0-\vz^*\|A_k,
\end{equation}
where we used $a_{k-1} \leq A_k$ in the last inequality. 
Together with Theorem~\ref{thm:aep}, this gives
\begin{equation}\label{eq:outer_residual_uniform}
    \|\oF(\vz_k)+\vv_k\|
    \leq\frac{2\|\vz_0-\vz^*\|}{A_k}
    \leq2\sqrt{2}L_2\|\vz_0-\vz^*\|^2.
\end{equation}
Furthermore, \eqref{eq:zk_plus_half} and
Lemma~\ref{lem:bounded_trajectory} show that
\begin{equation}\label{eq:trial_half_displacement}
\begin{aligned}
    \|\vz_{k+1/2}(a)-\vz_k\|=\frac{a}{A_k+a}
      \|\vz_0-\vz_k-A_k(\oF(\vz_k)+\vv_k)\| \leq3\|\vz_0-\vz^*\|\frac{a}{A_k+a}.
\end{aligned}
\end{equation}
Substituting \eqref{eq:outer_residual_uniform} and
\eqref{eq:trial_half_displacement} into
\eqref{eq:trial_displacement_bound}, and using
\begin{equation*}
    \frac{a}{A_k+a}\leq 1, \qquad \frac{a^2}{A_k+a}\leq a,
    \qquad
    a\left(\frac{a}{A_k+a}\right)^2\leq a
    \leq\bar a_k,
\end{equation*}
we obtain
\begin{equation}\label{eq:trial_displacement_kpos}
    \|\vd_k(a)\|
    \leq3\|\vz_0-\vz^*\|
       +\left(2\sqrt{2}+\frac92\right)
          L_2\|\vz_0-\vz^*\|^2\bar a_k.
\end{equation}
This completes the proof. 
\end{proof}

\subsection{Proof of Lemma~\ref{lem:trial_map_lipschitz}}

We first record a useful result used in the proof of Lemma~\ref{lem:trial_displacement}. Since iteration
$k-1$ was accepted, from \eqref{eq:A_lower_bound} we have
\begin{equation}\label{eq:A_inverse_upper_bound}
    \frac{1}{A_k}
    \leq\sqrt{2}L_2\|\vz_0-\vz^*\|,
    \qquad k\geq1.
\end{equation}
For $t>0$, define the trial weight and center
\begin{equation*}
    \alpha(t):=\frac{t}{A_k+t},
    \qquad
    \vc(t):=\vz_k+\alpha(t)(\vz_0-\vz_k).
\end{equation*}

By \eqref{eq:trial_displacement_map} and the triangle inequality, for
$a,b>0$,
\begin{equation}\label{eq:displacement_difference_reduction}
\begin{aligned}
    \|\vd_k(a)-\vd_k(b)\|
    &=\|\bigl(\vz_{k+1}(a)-\vz_{k+1/2}(a)\bigr)
       -\bigl(\vz_{k+1}(b)-\vz_{k+1/2}(b)\bigr)\| \\
    &\leq \|\vz_{k+1}(a)-\vz_{k+1}(b)\|
       +\|\vz_{k+1/2}(a)-\vz_{k+1/2}(b)\|.
\end{aligned}
\end{equation}
Moreover, \eqref{eq:zk_plus_half} and the first bound in
\eqref{eq:bounded_trajectory} give
\begin{equation}\label{eq:trial_predictor_sensitivity}
   \|\vz_{k+1/2}(a)-\vz_{k+1/2}(b)\|
   = |\alpha(a)-\alpha(b)|
    \bigl\|\vz_0-\vz_k-A_k(\oF(\vz_k)+\vv_k)\bigr\|
   \leq3\|\vz_0-\vz^*\||\alpha(a)-\alpha(b)|.
\end{equation}
We next control the first term in
\eqref{eq:displacement_difference_reduction}.

\begin{lemma}
\label{lem:trial_sensitivity}
For any $a,b>0$,
\begin{align}
    \|\vz_{k+1}(a)-\vz_{k+1}(b)\|
    &\leq
       |\alpha(a)-\alpha(b)|\|\vz_0-\vz_k\|
       +b\|\oP_{k,b}(\vz_{k+1}(a))-\oP_{k,a}(\vz_{k+1}(a))\|\notag\\
    &\quad+\frac{|a-b|}{a}
       \|\vc(a)-\vz_{k+1}(a)\|.
    \label{eq:pre_model_sensitivity}
\end{align}
\end{lemma}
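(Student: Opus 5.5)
The plan is a resolvent-comparison argument. First I rewrite each trial subproblem so that both share the same monotone operator, then I use the nonexpansiveness of the resolvent. Write $\vz_a:=\vz_{k+1}(a)$ and $\vz_b:=\vz_{k+1}(b)$. By \eqref{eq:trial_subproblem}, $\vz_a=(\oI+a(\oP_{k,a}+\oH))^{-1}(\vc(a))$, and similarly for $b$. The models $\oP_{k,a}$ and $\oP_{k,b}$ differ, because they are expanded around different points $\vz_{k+\frac12}(a)$ and $\vz_{k+\frac12}(b)$. To handle this, I choose $\oP_{k,b}+\oH$ as the common operator. This operator is maximally monotone, since the Jacobian model is affine with monotone linear part, so its resolvent $\oJ_{b}:=(\oI+b(\oP_{k,b}+\oH))^{-1}$ is nonexpansive.

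\textbf{Step 1: express $\vz_a$ as an image under $\oJ_b$.} From the inclusion for $\vz_a$,
\[
\vw_a:=\frac{\vc(a)-\vz_a}{a}-\oP_{k,a}(\vz_a)\in\oH(\vz_a).
\]
Adding $b\oP_{k,b}(\vz_a)+b\vw_a$ to $\vz_a$ shows that $\vz_a=\oJ_b(\vy)$, where
\[
\vy:=\vz_a+b\oP_{k,b}(\vz_a)+b\vw_a
=\vz_a+\frac{b}{a}(\vc(a)-\vz_a)+b\bigl(\oP_{k,b}(\vz_a)-\oP_{k,a}(\vz_a)\bigr).
\]

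\textbf{Step 2: apply nonexpansiveness.} This gives $\|\vz_a-\vz_b\|\le\|\vy-\vc(b)\|$. Now decompose
\[
\vy-\vc(b)=\bigl(\vc(a)-\vc(b)\bigr)+\Bigl(\frac{b}{a}-1\Bigr)(\vc(a)-\vz_a)+b\bigl(\oP_{k,b}(\vz_a)-\oP_{k,a}(\vz_a)\bigr).
\]
By the definition of $\vc$, the first term equals $(\alpha(a)-\alpha(b))(\vz_0-\vz_k)$ in norm, which yields $|\alpha(a)-\alpha(b)|\|\vz_0-\vz_k\|$. The second term has norm $\frac{|a-b|}{a}\|\vc(a)-\vz_{k+1}(a)\|$. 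The third term is exactly the model-mismatch term in \eqref{eq:pre_model_sensitivity}. The triangle inequality then gives the claim.

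The only point needing care is the maximal monotonicity of $\oP_{k,b}+\oH$, which is what justifies the single-valuedness and nonexpansiveness of $\oJ_b$. The affine model $\oP_{k,b}$ is continuous and monotone with full domain, because $D\oF$ is monotone when $\oF$ is. The sum of such an operator with the maximally monotone $\oH$ is therefore maximally monotone. Beyond this, the main subtlety is the bookkeeping in choosing the comparison operator: the prefactor $b$ on the model-mismatch term, together with evaluation at $\vz_{k+1}(a)$, matches the statement exactly.
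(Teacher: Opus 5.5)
Your proof is correct and is essentially the paper's argument: the paper forms the same two graph points of $\oP_{k,b}+\oH$ at $\vz_{k+1}(a)$ and $\vz_{k+1}(b)$, applies monotonicity, multiplies by $b$, and cancels via Cauchy--Schwarz. That is exactly the inlined proof of the resolvent nonexpansiveness you invoke, and your decomposition of $\vy-\vc(b)$ matches its three terms. Only monotonicity (not maximality) of $\oP_{k,b}+\oH$ is actually needed, since both trial points are already known to exist; this is why the paper can reuse the argument verbatim for the regularized tensor model.
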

\begin{proof}
The trial inclusions in \eqref{eq:trial_subproblem} imply, for $a,b>0$,
\begin{align}
    \frac{\vc(a)-\vz_{k+1}(a)}{a}
       +\oP_{k,b}(\vz_{k+1}(a))-\oP_{k,a}(\vz_{k+1}(a))
    &\in(\oP_{k,b}+\oH)(\vz_{k+1}(a)),\notag\\
    \frac{\vc(b)-\vz_{k+1}(b)}{b}
    &\in(\oP_{k,b}+\oH)(\vz_{k+1}(b)).
    \label{eq:trial_graph_points}
\end{align}
By monotonicity of $\oP_{k,b}+\oH$, we have
\begin{equation*}
    0\leq \Biggl\langle
       \frac{\vc(a)-\vz_{k+1}(a)}{a}
       +\oP_{k,b}(\vz_{k+1}(a))-\oP_{k,a}(\vz_{k+1}(a))
       -\frac{\vc(b)-\vz_{k+1}(b)}{b},
       \vz_{k+1}(a)-\vz_{k+1}(b)
    \Biggr\rangle.
\end{equation*}
Since $\vc(a)-\vc(b)=(\alpha(a)-\alpha(b))(\vz_0-\vz_k)$,
multiplying the monotonicity inequality by $b$ and rearranging gives
\begin{align}
    \|\vz_{k+1}(a)-\vz_{k+1}(b)\|^2
    &\leq\Biggl\langle
       (\alpha(a)-\alpha(b))(\vz_0-\vz_k)
       +b\bigl(\oP_{k,b}(\vz_{k+1}(a))-\oP_{k,a}(\vz_{k+1}(a))\bigr)\notag\\
    &\qquad
       +\frac{b-a}{a}(\vc(a)-\vz_{k+1}(a)),
       \vz_{k+1}(a)-\vz_{k+1}(b)
    \Biggr\rangle.
    \label{eq:two_trial_monotonicity}
\end{align}
If the two trial points coincide, the next estimate is immediate;
otherwise, Cauchy--Schwarz, cancellation $\|\|\vz_{k+1}(a)-\vz_{k+1}(b)\|\|$, and the
triangle inequality yield \eqref{eq:pre_model_sensitivity}.
\end{proof}

In the next lemma, we control the second term on the right-hand side of \eqref{eq:pre_model_sensitivity}. 
\begin{lemma}
\label{lem:trial_model_sensitivity}
For any $a,b>0$,
\begin{equation*}
    b\|\oP_{k,b}(\vz_{k+1}(a))-\oP_{k,a}(\vz_{k+1}(a))\|
    \leq3L_2\|\vz_0-\vz^*\|
       \left(\|\vd_k(a)\|+\frac32\|\vz_0-\vz^*\|\right)|a-b|.
\end{equation*}
\end{lemma}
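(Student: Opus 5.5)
The plan is to write out the difference of the two affine models explicitly and bound it term by term. Put $\vy:=\vz_{k+1}(a)$, $\vu:=\vz_{k+1/2}(a)$ and $\vw:=\vz_{k+1/2}(b)$. By \eqref{eq:trial_model},
\[
  \oP_{k,b}(\vy)-\oP_{k,a}(\vy)
  =\oF(\vw)+D\oF(\vw)(\vy-\vw)-\oF(\vu)-D\oF(\vu)(\vy-\vu).
\]
I will regroup this as $\bigl[\oF(\vw)-\oF(\vu)-D\oF(\vu)(\vw-\vu)\bigr]+\bigl(D\oF(\vw)-D\oF(\vu)\bigr)(\vy-\vw)$. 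The first bracket is a Taylor remainder, bounded by $\frac{L_2}{2}\|\vw-\vu\|^2$. By Assumption~\ref{assum:Lips_Jacobian}, the second term is bounded by $L_2\|\vw-\vu\|\,\|\vy-\vw\|$. Writing $\vy-\vw=\vd_k(a)+(\vu-\vw)$, the total becomes at most $L_2\|\vw-\vu\|\bigl(\|\vd_k(a)\|+\tfrac32\|\vw-\vu\|\bigr)$. This already has the $\tfrac32$ structure of the target bound.

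Next, I will control $\|\vw-\vu\|$ using \eqref{eq:trial_predictor_sensitivity}, which gives $\|\vw-\vu\|\leq 3\|\vz_0-\vz^*\|\,|\alpha(a)-\alpha(b)|$. I also use the cruder bound $\|\vw-\vu\|\leq\|\vz_0-\vz^*\|$ inside the parenthesis. For this I need $|\alpha(a)-\alpha(b)|\le 1/3$, or else I keep the factor $3|\alpha(a)-\alpha(b)|$ and check that it fits. This is the step to watch. I will instead aim to absorb the factor $b$ together with the $\alpha$-difference:
\[
  |\alpha(a)-\alpha(b)|=\frac{A_k|a-b|}{(A_k+a)(A_k+b)},
  \qquad
  b\,|\alpha(a)-\alpha(b)|\le \frac{A_k|a-b|}{A_k+a}\le|a-b|,
\]
using $b/(A_k+b)\le 1$. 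Multiplying the model-difference bound by $b$ then gives $b\|\cdots\|\le L_2\cdot 3\|\vz_0-\vz^*\|\,|a-b|\bigl(\|\vd_k(a)\|+\tfrac32\|\vw-\vu\|\bigr)$.

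It remains to bound $\|\vw-\vu\|$ inside the parenthesis by $\|\vz_0-\vz^*\|$. From \eqref{eq:trial_predictor_sensitivity}, $\|\vw-\vu\|\le 3\|\vz_0-\vz^*\|\,|\alpha(a)-\alpha(b)|$, and $|\alpha(a)-\alpha(b)|<1$ only yields the factor $3$. Getting a factor $1$ would need $|\alpha(a)-\alpha(b)|\le\tfrac13$, which fails in general.

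I expect this constant to be the main obstacle. The fallback is to redo the decomposition with the center placed more symmetrically, taking the Taylor expansion around the midpoint, or splitting the remainder as $\frac{L_2}{2}\|\vw-\vu\|^2\le\frac{L_2}{2}\|\vw-\vu\|\cdot 3\|\vz_0-\vz^*\|$ and the Jacobian term via $\|\vy-\vw\|\le\|\vd_k(a)\|+\|\vw-\vu\|$. Both routes give the claimed form with constant $3L_2\|\vz_0-\vz^*\|(\|\vd_k(a)\|+c\|\vz_0-\vz^*\|)$. I will then verify that $c=\tfrac32$ is attainable by bounding $|\alpha(a)-\alpha(b)|\le 1$ in the quadratic factor, while reserving $b|\alpha(a)-\alpha(b)|\le|a-b|$ for the linear factor. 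Concretely, $\tfrac{L_2}{2}\|\vw-\vu\|^2+L_2\|\vw-\vu\|^2=\tfrac32 L_2\|\vw-\vu\|^2$, and one copy of $\|\vw-\vu\|$ is bounded by $3\|\vz_0-\vz^*\|\,|\alpha(a)-\alpha(b)|$ while the other is bounded by $\|\vz_0-\vz^*\|$. The latter requires checking the sharper estimate $\|\vw-\vu\|\le\|\vz_0-\vz^*\|$ via Lemma~\ref{lem:bounded_trajectory}, which I expect needs the constant tracked carefully.
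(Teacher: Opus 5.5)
Your plan matches the paper's: expand the two affine models, split the difference into a Taylor remainder plus a Jacobian-difference term, then use \eqref{eq:trial_predictor_sensitivity} with $b|\alpha(a)-\alpha(b)|\le|a-b|$ and $|\alpha(a)-\alpha(b)|\le1$. However, the regrouping you chose cannot deliver the stated constant, and the patch you propose is not available. Write $\vy_a:=\vz_{k+1/2}(a)$ and $\vy_b:=\vz_{k+1/2}(b)$. Your Taylor remainder is based at $\vy_a$, so the Jacobian difference is left acting on $\vz_{k+1}(a)-\vy_b=\vd_k(a)+(\vy_a-\vy_b)$. Your quadratic coefficient is therefore $\frac{3L_2}{2}$ rather than $\frac{L_2}{2}$. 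The only available estimate is $\|\vy_a-\vy_b\|\le3\|\vz_0-\vz^*\|\,|\alpha(a)-\alpha(b)|$, and with it you get $3L_2\|\vz_0-\vz^*\|\bigl(\|\vd_k(a)\|+\frac92\|\vz_0-\vz^*\|\bigr)|a-b|$. That is the lemma with $\frac92$ in place of $\frac32$. The sharper bound $\|\vy_a-\vy_b\|\le\|\vz_0-\vz^*\|$ that you hope to extract from Lemma~\ref{lem:bounded_trajectory} is not there. That lemma only gives $\|\vz_0-\vz_k-A_k(\oF(\vz_k)+\vv_k)\|\le3\|\vz_0-\vz^*\|$, and since the statement is for arbitrary $a,b>0$, the factor $|\alpha(a)-\alpha(b)|$ can be arbitrarily close to $1$. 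Centering at the midpoint does not remove the problem either: the Jacobian difference would then act on $\vd_k(a)$ minus half of $\vy_b-\vy_a$.

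The missing step is to base the Taylor remainder at the other center, $\vy_b$:
\[
  \oP_{k,b}(\vz_{k+1}(a))-\oP_{k,a}(\vz_{k+1}(a))
  =\bigl[\oF(\vy_b)-\oF(\vy_a)-D\oF(\vy_b)(\vy_b-\vy_a)\bigr]
   +\bigl(D\oF(\vy_b)-D\oF(\vy_a)\bigr)\vd_k(a).
\]
Now the Jacobian difference acts on $\vd_k(a)$ exactly, and the bracket is a Taylor remainder of $\oF$ at $\vy_b$. The norm is therefore at most $\frac{L_2}{2}\|\vy_a-\vy_b\|^2+L_2\|\vy_a-\vy_b\|\,\|\vd_k(a)\|$. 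After multiplying by $b$:
\begin{itemize}
\item the linear term is at most $3L_2\|\vz_0-\vz^*\|\,\|\vd_k(a)\|\,|a-b|$;
\item the quadratic term is at most $\frac92L_2\|\vz_0-\vz^*\|^2\,b|\alpha(a)-\alpha(b)|^2\le\frac92L_2\|\vz_0-\vz^*\|^2|a-b|$.
\end{itemize}
These sum to exactly the stated bound. The weaker constant $\frac92$ would only change numerical constants downstream, but it is not the inequality claimed.
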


\begin{proof}
For brevity, set
$\vy_a:=\vz_{k+1/2}(a)$ and
$\vy_b:=\vz_{k+1/2}(b)$. Directly expanding the two affine models gives
\begin{equation*}
    \oP_{k,b}(\vw)-\oP_{k,a}(\vw)
    =\oF(\vy_b)-\oF(\vy_a)
       -D\oF(\vy_b)(\vy_b-\vy_a)
       +\bigl(D\oF(\vy_b)-D\oF(\vy_a)\bigr)(\vw-\vy_a).
\end{equation*}
The first three terms on the right form a Taylor remainder, and the last is
controlled by the Lipschitz continuity of the Jacobian. Hence
\begin{equation}\label{eq:model_sensitivity}
    \|\oP_{k,b}(\vw)-\oP_{k,a}(\vw)\|
    \leq
       \frac{L_2}{2}\|\vy_a-\vy_b\|^2
       +L_2\|\vy_a-\vy_b\|\|\vw-\vy_a\|.
\end{equation}
Taking $\vw=\vz_{k+1}(a)$ in \eqref{eq:model_sensitivity} and using
$\|\vz_{k+1}(a)-\vy_a\|=\|\vd_k(a)\|$ gives
\begin{equation*}
    \|\oP_{k,b}(\vz_{k+1}(a))-\oP_{k,a}(\vz_{k+1}(a))\|
    \leq
       \frac{L_2}{2}\|\vy_a-\vy_b\|^2
       +L_2\|\vy_a-\vy_b\|\|\vd_k(a)\|.
\end{equation*}

The trial-weight difference satisfies
\begin{align}
    b|\alpha(a)-\alpha(b)|
    &= b\left|\frac{a}{A_k+a} - \frac{b}{A_k+b} \right| = |a-b|\frac{b}{A_k+b}\frac{A_k}{A_k+a}
      \leq|a-b|,
      \label{eq:scaled_trial_weight_difference}\\
    |\alpha(a)-\alpha(b)|&\leq1.
      \label{eq:unit_trial_weight_difference}
\end{align}
For the quadratic term, \eqref{eq:trial_predictor_sensitivity},
\eqref{eq:scaled_trial_weight_difference}, and
\eqref{eq:unit_trial_weight_difference} give
\begin{align*}
    \frac{bL_2}{2}
       \|\vz_{k+1/2}(a)-\vz_{k+1/2}(b)\|^2
    &\leq\frac92bL_2\|\vz_0-\vz^*\|^2
       |\alpha(a)-\alpha(b)|^2
       \\
    &\leq\frac92L_2\|\vz_0-\vz^*\|^2|a-b|,
\end{align*}
where the last inequality follows by multiplying
\eqref{eq:scaled_trial_weight_difference} and
\eqref{eq:unit_trial_weight_difference}. For the linear term,
\eqref{eq:trial_predictor_sensitivity} and
\eqref{eq:scaled_trial_weight_difference} give
\begin{equation*}
    bL_2\|\vz_{k+1/2}(a)-\vz_{k+1/2}(b)\|\|\vd_k(a)\|
    \leq3L_2\|\vz_0-\vz^*\|\|\vd_k(a)\||a-b|.
\end{equation*}
Adding the two bounds on linear and quadratic terms therefore yields
\begin{equation*}
    b\|\oP_{k,b}(\vz_{k+1}(a))-\oP_{k,a}(\vz_{k+1}(a))\|
    \leq3L_2\|\vz_0-\vz^*\|
       \left(\|\vd_k(a)\|+\frac32\|\vz_0-\vz^*\|\right)|a-b|.
\end{equation*}
\end{proof}

Next, the following lemma controls the last term on the right-hand side of \eqref{eq:pre_model_sensitivity}.
\begin{lemma}
\label{lem:trial_center_gap}
For $0<a\leq\bar a_k$,
\begin{equation}\label{eq:uniform_trial_center_gap}
    \|\vc(a)-\vz_{k+1}(a)\|
    \leq\left(10\sqrt{2}+\frac92\right)
       L_2\|\vz_0-\vz^*\|^2a.
\end{equation}
\end{lemma}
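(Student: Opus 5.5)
The plan is to split $\vc(a)-\vz_{k+1}(a)$ through the intermediate point $\vz_{k+1/2}(a)$ and show that each piece is $\bigO(L_2\|\vz_0-\vz^*\|^2a)$. Throughout, write $g_k:=\oF(\vz_k)+\vv_k$ and $D:=\|\vz_0-\vz^*\|$. Since the statement is used for $k\geq1$, iteration $k-1$ was accepted, so \eqref{eq:A_inverse_upper_bound} gives $1/A_k\leq\sqrt2L_2D$, and \eqref{eq:outer_residual_uniform} gives $\|g_k\|\leq2\sqrt2L_2D^2$.

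\emph{Step 1 (the predictor gap).} Comparing the definition of $\vc(a)$ with \eqref{eq:zk_plus_half} gives
\[
\vc(a)-\vz_{k+1/2}(a)=\frac{aA_k}{A_k+a}\,g_k .
\]
Hence
\[
\|\vc(a)-\vz_{k+1/2}(a)\|\leq a\|g_k\|\leq2\sqrt2L_2D^2a .
\]

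\emph{Step 2 (the displacement).} It remains to bound $\|\vd_k(a)\|$, and here the bound must be linear in $a$. The uniform estimate of Lemma~\ref{lem:trial_displacement} does not suffice, since it contains an $a$-independent term $3D$. Instead I will return to Lemma~\ref{lem:trial_displacement_bound} and bound its three terms as follows.
\begin{itemize}
\item By \eqref{eq:trial_half_displacement}, $\|\vz_{k+1/2}(a)-\vz_k\|\leq3D\,a/(A_k+a)\leq3D\,a/A_k\leq3\sqrt2L_2D^2a$. This use of $1/A_k\leq\sqrt2L_2D$ is the key point: it converts the $a$-independent factor into one linear in $a$.
\item The second term satisfies $\frac{a^2}{A_k+a}\|g_k\|\leq a\|g_k\|\leq2\sqrt2L_2D^2a$.
\item For the quadratic term, use the cruder bound $\|\vz_{k+1/2}(a)-\vz_k\|\leq3D$. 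This gives $\frac{L_2a}{2}\|\vz_{k+1/2}(a)-\vz_k\|^2\leq\frac92L_2D^2a$.
\end{itemize}
Summing, $\|\vd_k(a)\|\leq(5\sqrt2+\frac92)L_2D^2a$.

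\emph{Step 3 (combine).} The triangle inequality then yields
\[
\|\vc(a)-\vz_{k+1}(a)\|\leq\|\vc(a)-\vz_{k+1/2}(a)\|+\|\vd_k(a)\|\leq\Bigl(7\sqrt2+\tfrac92\Bigr)L_2D^2a .
\]
This is below the claimed constant $10\sqrt2+\frac92$.

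The only delicate point is Step~2: making every term of the displacement bound carry a factor $a$, which relies on the lower bound on $A_k$ from the previous accepted step. Note that the hypothesis $a\leq\bar a_k$ is not actually needed for this estimate.
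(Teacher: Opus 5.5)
Your proof is correct and follows the paper's argument. Both use the identity $\vc(a)-\vz_{k+1}(a)=\frac{aA_k}{A_k+a}(\oF(\vz_k)+\vv_k)-\vd_k(a)$ and bound $\vd_k(a)$ term by term through Lemma~\ref{lem:trial_displacement_bound}, using $1/A_k\le\sqrt2L_2\|\vz_0-\vz^*\|$ from \eqref{eq:A_inverse_upper_bound}. Your sharper constant $7\sqrt2+\frac92$ arises only because the paper bounds the first term of \eqref{eq:trial_displacement_bound} loosely by $\frac{6a}{A_k+a}\|\vz_0-\vz^*\|$, where $\frac{3a}{A_k+a}\|\vz_0-\vz^*\|$ suffices.
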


\begin{proof}
By \eqref{eq:zk_plus_half} and the definition of $\vd_k(a)$, we can write 
\begin{equation*}
    \vz_{k+1}(a)=\vz_{k+1/2}(a)+\vd_k(a)=\vz_k+\frac{a}{A_k+a}
       \bigl(\vz_0-\vz_k-A_k(\oF(\vz_k)+\vv_k)\bigr)
       +\vd_k(a).
\end{equation*}
Subtracting this identity from $\vc(a) = \vz_k+\frac{a}{A_k+a}(\vz_0-\vz_k)$ gives
\begin{equation}\label{eq:trial_center_identity}
    \vc(a)-\vz_{k+1}(a)
    =\frac{aA_k}{A_k+a}(\oF(\vz_k)+\vv_k)-\vd_k(a).
\end{equation}
We first derive another bound on $\vd_k(a)$.
Applying \eqref{eq:trial_displacement_bound}, followed by
Lemma~\ref{lem:bounded_trajectory} and~\eqref{eq:A_inverse_upper_bound}, gives
\begin{align*}
    \|\vd_k(a)\|
    &\leq\frac{6a}{A_k+a}\|\vz_0-\vz^*\|
       +\frac{2a^2}{A_k(A_k+a)}\|\vz_0-\vz^*\|
       +\frac{9L_2a^3}{2(A_k+a)^2}\|\vz_0-\vz^*\|^2\\
    &\leq\frac{8a}{A_k}\|\vz_0-\vz^*\|
       +\frac92L_2a\|\vz_0-\vz^*\|^2\\
    &\leq\left(8\sqrt{2}+\frac92\right)
       L_2\|\vz_0-\vz^*\|^2a,
\end{align*}
where the second inequality uses $A_k+a\geq A_k$ and
$a/(A_k+a)\leq1$.

We now use this bound directly in \eqref{eq:trial_center_identity}.  By the
triangle inequality and \eqref{eq:A_inverse_upper_bound},
\begin{align*}
    \|\vc(a)-\vz_{k+1}(a)\|
    &\leq\frac{aA_k}{A_k+a}\|\oF(\vz_k)+\vv_k\|+\|\vd_k(a)\|\\
    &\leq\frac{2a}{A_k}\|\vz_0-\vz^*\|
       +\left(8\sqrt{2}+\frac92\right)
          L_2\|\vz_0-\vz^*\|^2a\\
    &\leq\left(10\sqrt{2}+\frac92\right)
       L_2\|\vz_0-\vz^*\|^2a.
\end{align*}
This proves \eqref{eq:uniform_trial_center_gap}.
\end{proof}

\begin{proof}[Proof of Lemma~\ref{lem:trial_map_lipschitz}]
Fix $a,b\in(0,\bar a_k]$. By~\eqref{eq:A_inverse_upper_bound}, we have 
\begin{equation*}
    |\alpha(a)-\alpha(b)|
    =\frac{A_k|a-b|}{(A_k+a)(A_k+b)}\leq\frac{|a-b|}{A_k}\leq\sqrt{2}L_2\|\vz_0-\vz^*\||a-b|.
\end{equation*}
Hence, using Lemma~\ref{lem:bounded_trajectory}, the first term in the right-hand side of Lemma~\ref{lem:trial_sensitivity} can be bounded by 
\begin{equation*}
  |\alpha(a)-\alpha(b)|\|\vz_0-\vz_k\| \leq 3\sqrt{2}L_2\|\vz_0-\vz^*\|^2|a-b|.
\end{equation*}
Together with Lemmas~\ref{lem:trial_model_sensitivity} and~\ref{lem:trial_center_gap}, we obtain from Lemma~\ref{lem:trial_sensitivity} that 
\begin{align*}
    \|\vz_{k+1}(a)-\vz_{k+1}(b)\|
    &\leq3\sqrt{2}L_2\|\vz_0-\vz^*\|^2|a-b|\\
    &\quad+3L_2\|\vz_0-\vz^*\|
       \left(\|\vd_k(a)\|+\frac32\|\vz_0-\vz^*\|\right)|a-b| \\
    &\quad+\left(10\sqrt{2}+\frac92\right)
       L_2\|\vz_0-\vz^*\|^2|a-b|.
\end{align*}
For $a\in(0,\bar a_k]$, Lemma~\ref{lem:trial_displacement} gives
\begin{equation*}
    \|\vd_k(a)\|
    \leq\left(3+\left(2\sqrt{2}+\frac92\right)
       L_2\|\vz_0-\vz^*\|\bar a_k\right)\|\vz_0-\vz^*\|.
\end{equation*}
Substituting this estimate yields
\begin{align*}
    \|\vz_{k+1}(a)-\vz_{k+1}(b)\|
    &\leq L_2\|\vz_0-\vz^*\|^2\left(
       13\sqrt{2}+18
       +\left(6\sqrt{2}+\frac{27}{2}\right)
          L_2\|\vz_0-\vz^*\|\bar a_k
    \right)|a-b|.
\end{align*}

Finally, 
note that 
\begin{equation*}
    \|\vz_{k+1/2}(a)-\vz_{k+1/2}(b)\|
    \leq 3\|\vz_0-\vz^*\||\alpha(a)-\alpha(b)| \leq 3\sqrt{2}L_2\|\vz_0-\vz^*\|^2|a-b|.
\end{equation*}
Combining the last two estimates with
\eqref{eq:displacement_difference_reduction}, we obtain
\begin{align*}
    \|\vd_k(a)-\vd_k(b)\|
    &\leq L_2\|\vz_0-\vz^*\|^2\left(
       16\sqrt{2}+18
       +\left(6\sqrt{2}+\frac{27}{2}\right)
          L_2\|\vz_0-\vz^*\|\bar a_k
    \right)|a-b|.
\end{align*}
This proves the asserted bound on $(0,\bar a_k]$. Finally,
\eqref{eq:trial_displacement_bound} implies
$\vd_k(a)\to0$ as $a\downarrow0$.  Since $\vd_k(0)=0$, letting either
argument in the preceding estimate tend to zero extends the same bound
to $[0,\bar a_k]$ and proves \eqref{eq:trial_map_lipschitz}.
\end{proof}

\section{Proofs for the AEP Tensor Method}
\label{appen:tensor_complexity}

In this section, we specify the line search used by
Algorithm~\ref{alg:aep_tensor} and prove the main complexity theorems in
Section~\ref{sec:AEP_tensor}.

\subsection{Bisection Line Search}\label{subsec:tensor_ls}

We first specify the upper trial step size and the bisection procedure.
Throughout, we set
\[
  \rho=\frac{1}{\sqrt{2}},
  \qquad
  \lambda=L_p,
  \qquad
  M_p=L_p+p\lambda=(p+1)L_p.
\]
At an outer iteration $k$, let
$\vz_{k+\frac{1}{2}}(a)$ be defined by~\eqref{eq:AEP_k+1/2} with
$a_k=a$.  Set
\[
  \oP_{k,a}(\vz)
  :=
  \oT_{\lambda}^{(p-1)}
  (\vz;\vz_{k+\frac{1}{2}}(a)),
\]
and let $\vz_{k+1}(a)$ be the unique solution of
\begin{equation}\label{eq:tensor_trial_subproblem}
  0\in
  a\oP_{k,a}(\vz_{k+1}(a))
  +a\oH(\vz_{k+1}(a))
  +\vz_{k+1}(a)
  -\left(
    \frac{a}{A_k+a}\vz_0
    +\frac{A_k}{A_k+a}\vz_k
  \right).
\end{equation}
Define
\begin{equation}\label{eq:tensor_trial_v}
  \vv_{k+1}(a)
  :=
  \frac{1}{a}\left(
    \frac{a}{A_k+a}\vz_0
    +\frac{A_k}{A_k+a}\vz_k
    -\vz_{k+1}(a)
  \right)
  -\oP_{k,a}(\vz_{k+1}(a))
  \in\oH(\vz_{k+1}(a)).
\end{equation}
The merit function $\psi_k$ is the one in~\eqref{eq:tensor_merit}.

We start the line search from the computable upper trial value
\begin{equation}\label{eq:tensor_ls_cap}
  \bar a_k
  :=
  \max\left\{
    \frac{2A_k\|\oF(\vz_k)+\vv_k\|}{\varepsilon},
    \left(
      \frac{2(1+\rho)}{\varepsilon}
      \left(\frac{p!\rho}{M_p}\right)^{1/(p-1)}
    \right)^{(p-1)/p}
  \right\}.
\end{equation}
If $\psi_k(\bar a_k)\leq p!\rho$, the method returns
$\vz_{k+1}(\bar a_k)$.  Otherwise, it initializes
$a^-=0$ and $a^+=\bar a_k$.  At every bisection step it evaluates
$a=(a^-+a^+)/2$ and performs the update
\begin{equation}\label{eq:tensor_bisection_update}
  \begin{cases}
    a^-\leftarrow a,
      &\psi_k(a)<p!\rho/\sqrt{2},\\
    a^+\leftarrow a,
      &\psi_k(a)>p!\rho,\\
    \text{accept }a,
      &p!\rho/\sqrt{2}\leq\psi_k(a)\leq p!\rho.
  \end{cases}
\end{equation}
Thus, with $\rho=1/\sqrt{2}$, the accepted interval is precisely
\eqref{eq:tensor_acceptance_window}.

We have the following lemma, which is the higher-order counterpart of
Lemma~\ref{lem:ls_residual}.
\begin{lemma}
  \label{lem:tensor_ls_residual}
  If $\psi_k(\bar a_k)\leq p!\rho$, then $
    \res^{\mathrm{tan}}(\vz_{k+1}(\bar a_k))
    \leq\varepsilon$.
\end{lemma}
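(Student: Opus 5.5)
We mirror the proof of Lemma~\ref{lem:ls_residual}, replacing the affine model by the regularized Taylor model. Fix a trial step $a>0$. From \eqref{eq:tensor_trial_v} and the definition of $\vz_{k+\frac12}(a)$ in \eqref{eq:AEP_k+1/2}, the same algebra that gave \eqref{eq:trial_identity} yields
\begin{equation*}
  \oP_{k,a}(\vz_{k+1}(a))+\vv_{k+1}(a)
  =\frac{A_k}{A_k+a}\bigl(\oF(\vz_k)+\vv_k\bigr)
   -\frac{\vz_{k+1}(a)-\vz_{k+\frac12}(a)}{a}.
\end{equation*}
This identity uses only the structure of the anchored update, not the form of the model, so it carries over verbatim. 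Adding $\oF(\vz_{k+1}(a))-\oP_{k,a}(\vz_{k+1}(a))$ and applying the triangle inequality together with the remainder bound \eqref{eq:pth_taylor_remainder} gives, with $\delta:=\|\vz_{k+1}(a)-\vz_{k+\frac12}(a)\|$,
\begin{equation*}
  \|\oF(\vz_{k+1}(a))+\vv_{k+1}(a)\|
  \leq \frac{A_k}{A_k+a}\|\oF(\vz_k)+\vv_k\|+\frac{\delta}{a}+\frac{M_p}{p!}\delta^{p}.
\end{equation*}

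Next we use the hypothesis $\psi_k(a)\leq p!\rho$, that is, $M_p a\delta^{p-1}\leq p!\rho$. This gives
\[
\delta\leq\Bigl(\tfrac{p!\rho}{M_p a}\Bigr)^{1/(p-1)}
\quad\text{and}\quad
\tfrac{M_p}{p!}\delta^p\leq \tfrac{\rho}{a}\,\delta .
\]
The last two terms are therefore bounded by
\[
\frac{(1+\rho)\delta}{a}\leq \frac{1+\rho}{a}\Bigl(\frac{p!\rho}{M_p}\Bigr)^{1/(p-1)}a^{-1/(p-1)}
=(1+\rho)\Bigl(\frac{p!\rho}{M_p}\Bigr)^{1/(p-1)}a^{-p/(p-1)}.
\]
Using $A_k/(A_k+a)\leq A_k/a$ on the first term, we obtain the analogue of \eqref{eq:cap_residual_bound}:
\[
\|\oF(\vz_{k+1}(a))+\vv_{k+1}(a)\|\leq \frac{A_k\|\oF(\vz_k)+\vv_k\|}{a}+(1+\rho)\Bigl(\frac{p!\rho}{M_p}\Bigr)^{1/(p-1)}a^{-p/(p-1)}.
\]

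Finally we set $a=\bar a_k$ from \eqref{eq:tensor_ls_cap}. The first entry of the maximum makes the first term at most $\varepsilon/2$. For the second entry, raising it to the power $p/(p-1)$ gives
\[
\bar a_k^{p/(p-1)}\geq \frac{2(1+\rho)}{\varepsilon}\Bigl(\frac{p!\rho}{M_p}\Bigr)^{1/(p-1)},
\]
so the second term is also at most $\varepsilon/2$. Hence $\|\oF(\vz_{k+1}(\bar a_k))+\vv_{k+1}(\bar a_k)\|\leq\varepsilon$. Since $\vv_{k+1}(\bar a_k)\in\oH(\vz_{k+1}(\bar a_k))$ by \eqref{eq:tensor_trial_subproblem}, the definition \eqref{eq:tangent_residual} gives the claim.

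The only step needing care is the exponent bookkeeping in the last paragraph. We must check that the exponent $(p-1)/p$ in the cap exactly matches the decay rate $a^{-p/(p-1)}$ of the model-error term. We also must convert the merit bound into a bound on both $\delta/a$ and the remainder $\frac{M_p}{p!}\delta^p$ without losing the factor $1+\rho$. Everything else is routine.
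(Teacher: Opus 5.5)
Your proof is correct and follows essentially the same route as the paper's. You use the same anchored identity for $\oP_{k,a}(\vz_{k+1}(a))+\vv_{k+1}(a)$ and the same conversion of $\psi_k(a)\leq p!\rho$ into bounds on $\delta/a$ and on the remainder $\frac{M_p}{p!}\delta^p$. You also use the same two entries of $\bar a_k$ to make each term at most $\varepsilon/2$, and your exponent bookkeeping matches the paper's bound \eqref{eq:tensor_cap_residual_bound}.
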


\begin{proof}
  Equations~\eqref{eq:AEP_k+1/2} and~\eqref{eq:tensor_trial_v} give
  \begin{equation}\label{eq:tensor_trial_identity}
    \oP_{k,a}(\vz_{k+1}(a))+\vv_{k+1}(a)
    =
    \frac{A_k}{A_k+a}
    \bigl(\oF(\vz_k)+\vv_k\bigr)
    -\frac{\vz_{k+1}(a)-\vz_{k+\frac{1}{2}}(a)}{a}.
  \end{equation}
  Write
  $d(a):=\|\vz_{k+1}(a)-\vz_{k+\frac{1}{2}}(a)\|$.
  By the Taylor remainder bound in~\eqref{eq:pth_taylor_remainder},
  \begin{align*}
    \|\oF(\vz_{k+1}(a))+\vv_{k+1}(a)\|
    &\leq
    \frac{A_k}{A_k+a}\|\oF(\vz_k)+\vv_k\|
    +\frac{d(a)}{a}
    +\frac{M_p}{p!}d(a)^p.
  \end{align*}
  If $\psi_k(a)=M_pad(a)^{p-1}\leq p!\rho$, then
  $M_pd(a)^p/p!\leq\rho d(a)/a$ and
  $d(a)\leq(p!\rho/(M_pa))^{1/(p-1)}$.  Consequently,
  \begin{equation}\label{eq:tensor_cap_residual_bound}
    \|\oF(\vz_{k+1}(a))+\vv_{k+1}(a)\|
    \leq
    \frac{A_k\|\oF(\vz_k)+\vv_k\|}{a}
    +(1+\rho)
    \left(\frac{p!\rho}{M_p}\right)^{1/(p-1)}
    a^{-p/(p-1)}.
  \end{equation}
  At $a=\bar a_k$, each term on the right is at most
  $\varepsilon/2$ by~\eqref{eq:tensor_ls_cap}.  Finally,
  $\vv_{k+1}(\bar a_k)\in\oH(\vz_{k+1}(\bar a_k))$, which proves the
  claim.
\end{proof}

\subsection{Complexity Analysis}\label{subsec:tensor_complexity_analysis}

We first prove the outer iteration complexity, then state the Lipschitz
estimate needed for the line-search analysis, and finally prove the
line-search complexity.

\begin{proof}[Proof of Theorem~\ref{thm:higher_order_complexity}]

  If an upper trial step size terminates the method during the first $T$
  outer iterations, the conclusion follows from
  Lemma~\ref{lem:tensor_ls_residual}. Otherwise, the line search returns
  $T$ accepted steps. By~\eqref{eq:pth_taylor_remainder} and the upper bound
  in~\eqref{eq:tensor_acceptance_window}, the relative-error condition
  in~\eqref{eq:AEP_tensor_error_condition} is satisfied, and hence these
  $T$ accepted steps form an instance of AEP with $\rho=1/\sqrt{2}$.
  Moreover, the lower bound in~\eqref{eq:tensor_acceptance_window} gives,
  for every such step,
  \[
    M_pa_i
    \|\vz_{i+1}-\vz_{i+\frac{1}{2}}\|^{p-1}
    \geq\frac{p!}{2}.
  \]
  Applying Lemma~\ref{lem:stability} with $\rho=1/\sqrt{2}$ therefore
  yields
  \begin{align}
    \label{eq:appendix_tensor_sum}
    \sum_{i=0}^{k-1}
    \frac{A_{i+1}^2}{a_i^{2p/(p-1)}}
    &\leq
    \left(\frac{2M_p}{p!}\right)^{2/(p-1)}
    \sum_{i=0}^{k-1}
    \frac{A_{i+1}^2}{a_i^2}
    \|\vz_{i+1}-\vz_{i+\frac{1}{2}}\|^2
    \notag\\
    &\leq
    2\left(\frac{2M_p}{p!}\right)^{2/(p-1)}
    \|\vz_0-\vz^*\|^2,
    \qquad k=1,\ldots,T.
  \end{align}
  As in Section~\ref{sec:AEP_tensor}, H\"older's inequality gives, for every
  $k=1,\ldots,T$,
  \[
    A_k^{2p/(3p-1)}
    \left(
      \sum_{i=0}^{k-1}
      \frac{A_{i+1}^2}{a_i^{2p/(p-1)}}
    \right)^{(p-1)/(3p-1)}
    \geq
    \sum_{i=0}^{k-1}A_{i+1}^{2(p-1)/(3p-1)}.
  \]
  Combining this inequality with~\eqref{eq:appendix_tensor_sum} yields
  \begin{equation}\label{eq:appendix_tensor_growth_recursion}
    A_k^{2p/(3p-1)}
    \geq
    c\sum_{j=1}^k A_j^{2(p-1)/(3p-1)},
    \qquad
    c:=
    \left[
      2\left(\frac{2M_p}{p!}\right)^{2/(p-1)}
      \|\vz_0-\vz^*\|^2
    \right]^{-(p-1)/(3p-1)}.
  \end{equation}
  Set
  \[
    B_k:=A_k^{2(p-1)/(3p-1)}.
  \]
  The sequence $\{B_k\}_{k\geq1}$ is positive and nondecreasing, and
  \eqref{eq:appendix_tensor_growth_recursion} becomes
  \[
    B_k^{p/(p-1)}
    \geq
    c\sum_{j=1}^k B_j.
  \]
  Applying Lemma~\ref{lem:growth} with $\alpha=p/(p-1)$ gives
  $B_k\geq(ck/p)^{p-1}$. Consequently,
  \begin{equation}\label{eq:appendix_tensor_A_growth}
    A_k
    =B_k^{(3p-1)/(2(p-1))}
    \geq
    \frac{p!}{2^{(p+1)/2}p^{(3p-1)/2}}
    \frac{k^{(3p-1)/2}}
    {M_p\|\vz_0-\vz^*\|^{p-1}}.
  \end{equation}
  For
  \[
    T=
    \left\lceil
      \left(
        \frac{2^{(p+3)/2}p^{(3p-1)/2}M_p
        \|\vz_0-\vz^*\|^p}{p!\varepsilon}
      \right)^{2/(3p-1)}
    \right\rceil,
  \]
  the last display implies
  $A_T\geq2\|\vz_0-\vz^*\|/\varepsilon$.
  Theorem~\ref{thm:aep} then gives
  \[
    \res^{\mathrm{tan}}(\vz_T)
    \leq
    \frac{2\|\vz_0-\vz^*\|}{A_T}
    \leq
    \varepsilon.
  \]
  Thus, the stopping test in Algorithm~\ref{alg:aep_tensor} returns
  $\vz_T$, unless the method has already terminated earlier. Finally,
  $M_p=(p+1)L_p$, so this choice of $T$ has the order stated in
  \eqref{eq:outer_higher_order_complexity}.
\end{proof}

The next technical estimate is the higher-order analogue of
Lemma~\ref{lem:second_order_merit_regular}.  We record it in the form needed
for the bisection analysis and defer its proof to
Section~\ref{appen:tensor_merit_regular}.

\begin{lemma}[Regularity of the tensor merit function]
  \label{lem:tensor_merit_regular}
  There is a constant $C_p>0$ depending only on $p$ such that, at every
  outer iteration before termination, $\psi_k$ is continuous on
  $[0,\bar a_k]$ and
  \begin{equation}\label{eq:tensor_merit_lipschitz}
    \Lip(\psi_k;[0,\bar a_k])
    \leq
    C_pM_p\|\vz_0-\vz^*\|^{p-1}
    \left(
      1+M_p\|\vz_0-\vz^*\|^{p-1}\bar a_k
    \right)^{2p-2}.
  \end{equation}
  Moreover, if the bisection phase is entered, then
  \begin{equation}\label{eq:tensor_cap_upper_bound}
    M_p\|\vz_0-\vz^*\|^{p-1}\bar a_k
    \leq
    C_p\max\left\{
      \frac{L_p\|\vz_0-\vz^*\|^p}{\varepsilon},
      \left(
        \frac{L_p\|\vz_0-\vz^*\|^p}{\varepsilon}
      \right)^{(p-1)/p}
    \right\}.
  \end{equation}
\end{lemma}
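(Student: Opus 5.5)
We follow the second-order argument of Lemma~\ref{lem:second_order_merit_regular} step by step, replacing the affine model by the regularized Taylor model $\oT_{\lambda}^{(p-1)}$ and tracking powers of $p-1$. Write $D:=\|\vz_0-\vz^*\|$ and $x_k:=M_pD^{p-1}\bar a_k$.

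\textbf{Cap bound~\eqref{eq:tensor_cap_upper_bound}.} Theorem~\ref{thm:aep} gives $A_k\|\oF(\vz_k)+\vv_k\|\leq 2D$, so the first entry of~\eqref{eq:tensor_ls_cap} is at most $4D/\varepsilon$. Multiplying by $M_pD^{p-1}$ gives $O_p(L_pD^p/\varepsilon)$. For the second entry, raise it to the power $(p-1)/p$. Multiplying by $M_pD^{p-1}$ gives a constant times
\[
  M_p^{1-1/p}D^{p-1}\varepsilon^{-(p-1)/p}=O_p\bigl((L_pD^p/\varepsilon)^{(p-1)/p}\bigr).
\]

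\textbf{Reduction of the Lipschitz bound~\eqref{eq:tensor_merit_lipschitz}.} Let $\vd_k(a)$ be defined as in~\eqref{eq:trial_displacement_map}. The analogue of Lemma~\ref{lem:ls_sensitivity}, using $\bigl|u^{p-1}-w^{p-1}\bigr|\leq (p-1)\max(u,w)^{p-2}|u-w|$, gives
\[
  \Lip(\psi_k)\leq M_p\Bigl(S^{p-1}+(p-1)\bar a_kS^{p-2}\Lip(\vd_k)\Bigr),
  \qquad S:=\sup_{[0,\bar a_k]}\|\vd_k\|.
\]
It therefore suffices to prove
\[
  S\leq C_pD(1+x_k)^{1/(p-1)}\cdot(\text{poly}),
  \qquad
  \Lip(\vd_k)\leq C_pM_pD^{p}\,\text{poly}(1+x_k),
\]
with total degree at most $2p-2$. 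In fact I aim for the crude bounds $S\leq C_pD(1+x_k)$ and $\Lip(\vd_k)\leq C_pM_pD^p(1+x_k)$. These give
\[
  \Lip(\psi_k)\leq C_pM_pD^{p-1}\bigl[(1+x_k)^{p-1}+x_k(1+x_k)^{p-1}\bigr],
\]
which is within the allowed exponent $2p-2$.

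\textbf{Bounds on $\vd_k$ and the other auxiliary quantities.} Lemma~\ref{lem:bounded_trajectory} carries over verbatim, since it only uses Theorem~\ref{thm:aep}, the stability estimate, and the upper acceptance bound. Its output is $\|\vz_k-\vz_0\|\leq 3D$ and $\|\vz_0-\vz_k-A_k(\oF(\vz_k)+\vv_k)\|\leq 3D$.

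The lower acceptance bound, combined with the individual stability term, gives
\[
  \tfrac{p!}{2}\leq M_pa_{k-1}\|\vz_k-\vz_{k-\frac12}\|^{p-1}\leq M_pa_{k-1}\Bigl(\sqrt2D\,\tfrac{a_{k-1}}{A_k}\Bigr)^{p-1}\leq C_pM_pD^{p-1}A_k.
\]
Hence $1/A_k\leq C_pM_pD^{p-1}$, which is the analogue of~\eqref{eq:A_inverse_upper_bound}.

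The monotonicity argument of Lemma~\ref{lem:trial_displacement_bound} works unchanged because $\oP_{k,a}+\oH$ is monotone by Lemma~\ref{lem:regularized_taylor}. It yields
\[
  \|\vd_k(a)\|\leq \|\vz_{k+\frac12}(a)-\vz_k\|+\tfrac{a^2}{A_k+a}\|\oF(\vz_k)+\vv_k\|+\tfrac{M_pa}{p!}\|\vz_{k+\frac12}(a)-\vz_k\|^p.
\]
Together with $\|\vz_{k+\frac12}(a)-\vz_k\|\leq 3D$, this gives $S\leq C_pD(1+x_k)$. The case $k=0$ is handled as in Lemma~\ref{lem:trial_displacement}: compare with $\vz^*$ to get $\|\vz_1(a)-\vz^*\|\leq D+aM_pD^p/p!$.

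\textbf{Main obstacle: $\Lip(\vd_k)$.} The model-sensitivity step, the analogue of Lemma~\ref{lem:trial_model_sensitivity}, is where the work lies. We must bound $b\|\oP_{k,b}(\vw)-\oP_{k,a}(\vw)\|$ at $\vw=\vz_{k+1}(a)$ when the Taylor center moves from $\vy_a$ to $\vy_b$. I would write
\[
  \oP_{k,b}(\vw)-\oP_{k,a}(\vw)=\int_0^1\partial_{\vy}\oT^{(p-1)}_\lambda\bigl(\vw;\vy_a+t(\vy_b-\vy_a)\bigr)[\vy_b-\vy_a]\,dt.
\]
The standard identity $\partial_{\vy}\oT^{(p-1)}(\vw;\vy)=\tfrac{1}{(p-1)!}\bigl(D^{p-1}\oF(\vy)-\text{const}\bigr)$ is a telescoping identity in which only the top-order term survives, namely $\tfrac{1}{(p-1)!}D^{p}$-type differences. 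Replacing that top term by a Lipschitz difference gives
\[
  \|\partial_{\vy}\oT^{(p-1)}\|\leq C_pL_p\|\vw-\vy\|^{p-1}.
\]
The regularizer contributes $C_p\lambda\|\vw-\vy\|^{p-1}$. The resulting bound is $C_pM_p(\|\vd_k(a)\|+\|\vy_a-\vy_b\|)^{p-1}\|\vy_a-\vy_b\|$.

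This is combined with $b\|\vy_a-\vy_b\|\leq 3D|a-b|$ and with $\|\vy_a-\vy_b\|\leq 3D$. The center-gap term, the analogue of Lemma~\ref{lem:trial_center_gap}, is bounded by $C_pM_pD^p\,a(1+x_k)^{p-1}$, and it enters Lemma~\ref{lem:trial_sensitivity} divided by $a$. In total, $\Lip(\vd_k)\leq C_pM_pD^{p}(1+x_k)^{p-1}$. Combined with $S^{p-2}\leq C_pD^{p-2}(1+x_k)^{p-2}$, this gives degree at most $2p-2$ after absorbing $x_k$, although the powers must be checked carefully.

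Continuity of $\psi_k$ follows from the Lipschitz bound together with $\vd_k(a)\to0$ as $a\downarrow0$.
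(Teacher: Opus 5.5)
For $k\ge1$ your argument is the paper's. It uses the same ingredients: the power-rule analogue of Lemma~\ref{lem:ls_sensitivity}, the bound $S\le C_pD(1+x_k)$, the center-sensitivity estimate for $\oT_{\lambda}^{(p-1)}$, the bound $1/A_k\le C_pM_pD^{p-1}$ from the previously accepted step, and the cap estimate via Theorem~\ref{thm:aep}. The exponents close with $\Lip(\vd_k)\le C_pM_pD^p(1+x_k)^{p-1}$, not the ``crude'' $(1+x_k)$ you first announce (the model-sensitivity step does not give that). With this bound the second term is $C_pM_pD^{p-1}x_k(1+x_k)^{2p-3}$, which is within $(1+x_k)^{2p-2}$.

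One step needs to be made explicit. The center-gap term is divided by $a$, so you need $\|\vd_k(a)\|\le C_pM_pD^pa$. This comes from $\|\vz_{k+\frac12}(a)-\vz_k\|\le 3Da/(A_k+a)\le 3Da/A_k$, not from the bound $3D$ you used for $S$. The same linear bound gives $\vd_k(a)\to0$ as $a\downarrow0$.

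The genuine gap is $k=0$. The lemma covers it, but your Lipschitz argument does not reach it. At $k=0$ we have $A_0=0$ and no previously accepted step, and the reduction ``it suffices to bound $\Lip(\vd_k)$ by $C_pM_pD^p\,\mathrm{poly}(1+x_k)$'' is false there. Since $\vd_0(a)=\vz_1(a)-\vz_0$ is a resolvent displacement, $\|\vd_0(a)\|/a\to\res^{\mathrm{tan}}(\vz_0)$ as $a\downarrow0$, and nothing in the lemma controls $\res^{\mathrm{tan}}(\vz_0)$.

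Concretely, take $\oF\equiv0$ and $\oH=\mu(\cdot-\vz^*)$. The trial point lies on the segment $[\vz_0,\vz^*]$, and $s=\|\vd_0(a)\|$ solves $\frac{a\lambda}{(p-1)!}s^p+(1+a\mu)s=a\mu D$. Hence $\Lip(\vd_0;[0,\bar a_0])\ge\mu D$. But $\bar a_0$ (whose first entry vanishes because $A_0=0$), and therefore your right-hand side, does not depend on $\mu$. Your comparison with $\vz^*$ only bounds $S$ at $k=0$.

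What is missing is the paper's direct treatment of $\psi_0$, which never passes through $\Lip(\vd_0)$. Because $\vz_{\frac12}(a)=\vz_0$ and the model does not depend on $a$, monotonicity of the two trial inclusions gives $\|\vd_0(a)-\vd_0(b)\|\le\frac{b-a}{b}\|\vd_0(b)\|$ for $0<a\le b$. Keep the step-size factor of $\psi_0$ in front of $\bigl|\|\vd_0(a)\|^{p-1}-\|\vd_0(b)\|^{p-1}\bigr|$ instead of dividing by $|a-b|$; then the $1/b$ is absorbed. This gives $|\psi_0(a)-\psi_0(b)|\le pM_p(b-a)S^{p-1}$, which yields both $\Lip(\psi_0)\le C_pM_pD^{p-1}(1+x_0)^{p-1}$ and the continuity of $\psi_0$.
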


\begin{proof}[Proof of Theorem~\ref{thm:tensor_ls_complexity}]
  If the upper trial value is accepted, the claim is immediate.  Otherwise, the
  bisection endpoints satisfy
  \[
    \psi_k(a^-)<\frac{p!\rho}{\sqrt{2}},
    \qquad
    \psi_k(a^+)>p!\rho.
  \]
  After $t$ unsuccessful bisection trials, the interval has length
  $\bar a_k/2^t$.  Hence, by
  Lemma~\ref{lem:tensor_merit_regular},
  \[
    p!\rho\left(1-\frac{1}{\sqrt{2}}\right)
    <
    \psi_k(a^+)-\psi_k(a^-)
    \leq
    \Lip(\psi_k;[0,\bar a_k])\frac{\bar a_k}{2^t}.
  \]
  Thus the number of unsuccessful trials is at most
  \begin{equation*}
    \left\lceil
      \log_2\left(
        \frac{\bar a_k\Lip(\psi_k;[0,\bar a_k])}
        {p!\rho(1-1/\sqrt{2})}
      \right)
    \right\rceil.
  \end{equation*}
  To see explicitly that the polynomial exponent in
  \eqref{eq:tensor_merit_lipschitz} does not affect the logarithmic
  complexity, set
  \[
    Q:=\frac{L_p\|\vz_0-\vz^*\|^p}{\varepsilon},
    \qquad
    x_k:=M_p\|\vz_0-\vz^*\|^{p-1}\bar a_k.
  \]
  Equations~\eqref{eq:tensor_merit_lipschitz} and
  \eqref{eq:tensor_cap_upper_bound} imply
  \[
    \bar a_k\Lip(\psi_k;[0,\bar a_k])
    \leq C_px_k(1+x_k)^{2p-2}
    \leq C_p(1+Q)^{2p-1}.
  \]
  Hence, after adding the initial evaluation at $a=\bar a_k$, the total
  number of trial
  evaluations is
  \[
    \bigO_p\left(
      1+\log\left(
        1+\frac{L_p\|\vz_0-\vz^*\|^p}{\varepsilon}
      \right)
    \right).
  \]
\end{proof}

\section{Regularity of the Tensor Merit Function}
\label{appen:tensor_merit_regular}

Throughout this section, $C_p>0$ denotes a constant that depends only on
$p$ and whose actual value may change from one occurrence to the next.

We follow the same three-step argument used for the second-order merit
function in Appendix~\ref{appen:second_order_merit_regular}: reduce the merit
function estimate to bounds on the trial displacement, establish a uniform
bound on that displacement, and then control its Lipschitz modulus with respect to
the trial step size. Define
\begin{equation}\label{eq:tensor_trial_displacement_map}
  \vd_k(a):=\vz_{k+1}(a)-\vz_{k+\frac{1}{2}}(a),
  \qquad a>0,
  \qquad \vd_k(0):=0.
\end{equation}
As in the second-order case, write
\[
  \Lip(\vd_k;[0,\bar a_k])
  :=
  \sup_{0\leq a<b\leq\bar a_k}
  \frac{\|\vd_k(a)-\vd_k(b)\|}{|a-b|}.
\]
The following higher-order counterpart of Lemma~\ref{lem:ls_sensitivity}
reduces the Lipschitz modulus of $\psi_k$ to uniform and Lipschitz bounds
for this trial-displacement map.

\begin{lemma}[Tensor merit sensitivity]
  \label{lem:tensor_ls_sensitivity}
  Suppose that $\vd_k$ is Lipschitz continuous on $[0,\bar a_k]$.
  Then $\psi_k$ is Lipschitz continuous on the same interval, with
  \begin{equation}
    \label{eq:tensor_ls_sensitivity_reduction}
    \Lip(\psi_k;[0,\bar a_k])
    \leq
    M_p\Bigl(
      \sup_{0\leq a\leq\bar a_k}\|\vd_k(a)\|
    \Bigr)^{p-1}
    +(p-1)M_p\bar a_k
    \Bigl(
      \sup_{0\leq a\leq\bar a_k}\|\vd_k(a)\|
    \Bigr)^{p-2}
    \Lip(\vd_k;[0,\bar a_k]).
  \end{equation}
\end{lemma}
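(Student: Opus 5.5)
The plan is to mirror the proof of Lemma~\ref{lem:ls_sensitivity}, replacing the scalar function $t\mapsto t$ by $t\mapsto t^{p-1}$ and using a mean-value bound for the latter. Write $S:=\sup_{0\leq a\leq\bar a_k}\|\vd_k(a)\|$. This is finite because $\vd_k$ is Lipschitz on the compact interval $[0,\bar a_k]$ with $\vd_k(0)=0$, so $S\leq\bar a_k\Lip(\vd_k;[0,\bar a_k])$. Note also that $\psi_k(a)=M_pa\|\vd_k(a)\|^{p-1}$ for all $a\in[0,\bar a_k]$; at $a=0$ both sides vanish, consistent with $\psi_k(0):=0$.

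Fix distinct $a,b\in[0,\bar a_k]$. Add and subtract $M_pb\|\vd_k(a)\|^{p-1}$ to split
\[
  |\psi_k(a)-\psi_k(b)|
  \leq M_p|a-b|\,\|\vd_k(a)\|^{p-1}
  +M_pb\,\bigl|\|\vd_k(a)\|^{p-1}-\|\vd_k(b)\|^{p-1}\bigr|.
\]
The first term is at most $M_pS^{p-1}|a-b|$. For the second term, set $s=\|\vd_k(a)\|$ and $t=\|\vd_k(b)\|$, both in $[0,S]$. Since $p\geq2$, the map $u\mapsto u^{p-1}$ is $C^1$ on $[0,\infty)$. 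The mean value theorem then gives $|s^{p-1}-t^{p-1}|\leq(p-1)\max\{s,t\}^{p-2}|s-t|\leq(p-1)S^{p-2}|s-t|$; for $p=2$ this reads simply $|s-t|$, with the convention $S^0=1$. The reverse triangle inequality gives $|s-t|\leq\|\vd_k(a)-\vd_k(b)\|\leq\Lip(\vd_k;[0,\bar a_k])|a-b|$. Finally, $b\leq\bar a_k$.

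Combining these bounds, dividing by $|a-b|$, and taking the supremum over $0\leq a<b\leq\bar a_k$ yields \eqref{eq:tensor_ls_sensitivity_reduction}. Lipschitz continuity of $\psi_k$ follows at once, since the right-hand side is finite.

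There is no real obstacle here. The only points needing care are the endpoint $a=0$, which is handled by the convention $\vd_k(0)=0$ together with $\psi_k(0)=0$, and the degenerate case $p=2$ in the mean-value step. For $p=2$ the statement reduces exactly to Lemma~\ref{lem:ls_sensitivity} with $L_2$ replaced by $M_p$.
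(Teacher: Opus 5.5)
Your proposal is correct and follows the paper's proof essentially verbatim: the same add-and-subtract split of $|\psi_k(a)-\psi_k(b)|$, the mean-value bound $|x^{p-1}-y^{p-1}|\le (p-1)\max\{x,y\}^{p-2}|x-y|$, the reverse triangle inequality, and $b\le\bar a_k$. Your extra remarks, namely finiteness of the supremum via $\vd_k(0)=0$, the endpoint $a=0$, and the convention $S^0=1$ when $p=2$, are valid details that the paper leaves implicit.
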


\begin{proof}
  For distinct $a,b\in[0,\bar a_k]$, the definition of $\psi_k$ gives
  \begin{align*}
    |\psi_k(a)-\psi_k(b)|
    &\leq
    M_p|a-b|\|\vd_k(a)\|^{p-1}
    +M_pb\left|
      \|\vd_k(a)\|^{p-1}-\|\vd_k(b)\|^{p-1}
    \right|.
  \end{align*}
  For all $x,y\geq0$, the mean-value theorem yields
  \[
    |x^{p-1}-y^{p-1}|
    \leq
    (p-1)\max\{x,y\}^{p-2}|x-y|.
  \]
  Applying this inequality together with the reverse triangle inequality,
  the uniform bound on the trial displacement, and $b\leq\bar a_k$, we
  obtain
  \begin{align*}
    |\psi_k(a)-\psi_k(b)|
    &\leq
    M_p\left(
      \sup_{0\leq t\leq\bar a_k}\|\vd_k(t)\|
    \right)^{p-1}|a-b|
    \\
    &\quad
    +(p-1)M_p\bar a_k
    \left(
      \sup_{0\leq t\leq\bar a_k}\|\vd_k(t)\|
    \right)^{p-2}
    \Lip(\vd_k;[0,\bar a_k])|a-b|.
  \end{align*}
  Dividing by $|a-b|$ and taking the supremum over $a$ and $b$
  proves~\eqref{eq:tensor_ls_sensitivity_reduction}.
\end{proof}

The next two lemmas control the size and variation of $\vd_k$. Their proofs
are deferred to Subsections~\ref{appen:tensor_trial_displacement}
and~\ref{appen:tensor_trial_map_lipschitz}.
\begin{lemma}[Trial displacement]
  \label{lem:tensor_trial_displacement}
  For every $k\geq0$ and $0<a\leq\bar a_k$,
  \begin{equation}\label{eq:tensor_uniform_trial_displacement}
    \|\vd_k(a)\|
    \leq C_p\left(
      1+M_p\|\vz_0-\vz^*\|^{p-1}\bar a_k
    \right)\|\vz_0-\vz^*\|,
  \end{equation}
  where $C_p>0$ depends only on $p$.
\end{lemma}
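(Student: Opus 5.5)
I will follow the second-order proof of Lemma~\ref{lem:trial_displacement}, replacing the quadratic Taylor remainder with the $p$th-order bound \eqref{eq:pth_taylor_remainder}.

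\textbf{Step 1: a tensor analogue of Lemma~\ref{lem:trial_displacement_bound}.} Since $\oP_{k,a}=\oT_\lambda^{(p-1)}(\cdot;\vz_{k+\frac12}(a))$ is maximal monotone by Lemma~\ref{lem:regularized_taylor}, the operator $\oP_{k,a}+\oH$ is monotone. The identity \eqref{eq:tensor_trial_identity} shows that the pair
\[
\Bigl(\vz_{k+1}(a),\ \frac{\vz_{k+\frac12}(a)-\vz_{k+1}(a)}{a}+\frac{A_k}{A_k+a}(\oF(\vz_k)+\vv_k)\Bigr)
\]
lies on the graph of $\oP_{k,a}+\oH$, and so does $(\vz_k,\oP_{k,a}(\vz_k)+\vv_k)$. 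Comparing these two graph points gives an inequality that I then treat with the three-point identity and Young's inequality, exactly as in the second-order proof. The result is
\[
\|\vd_k(a)\|\leq\|\vz_{k+\frac12}(a)-\vz_k\|+\frac{a^2}{A_k+a}\|\oF(\vz_k)+\vv_k\|+\frac{M_pa}{p!}\|\vz_{k+\frac12}(a)-\vz_k\|^p .
\]

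\textbf{Step 2: the case $k=0$.} Here $\vz_{\frac12}(a)=\vz_0$. Testing monotonicity of $\oP_{0,a}+\oH$ against the graph point $(\vz^*,\oP_{0,a}(\vz^*)+\vv^*)$ gives
\[
\|\vz_1(a)-\vz^*\|\leq\|\vz_0-\vz^*\|+\frac{M_pa}{p!}\|\vz_0-\vz^*\|^p,
\]
and hence
\[
\|\vd_0(a)\|\leq\Bigl(2+M_p\bar a_0\|\vz_0-\vz^*\|^{p-1}\Bigr)\|\vz_0-\vz^*\|.
\]

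\textbf{Step 3: uniform trajectory bounds for $k\geq1$.} I need the tensor versions of Lemma~\ref{lem:bounded_trajectory} and of \eqref{eq:A_inverse_upper_bound}.
\begin{enumerate}[(i)]
\item The argument for Lemma~\ref{lem:bounded_trajectory} carries over verbatim. It uses only three ingredients: Theorem~\ref{thm:aep}, the stability bound of Lemma~\ref{lem:stability} with $\rho=1/\sqrt2$, and the relative-error condition, which holds by the upper acceptance bound. This yields $\|\vz_k-\vz_0\|\leq3D$ and $\|\vz_0-\vz_k-A_k(\oF(\vz_k)+\vv_k)\|\leq3D$, where $D:=\|\vz_0-\vz^*\|$.
\item For the replacement of \eqref{eq:A_inverse_upper_bound}, I combine the lower acceptance bound $M_pa_{k-1}\|\vd_{k-1}\|^{p-1}\geq p!/2$ with the individual stability term $\|\vd_{k-1}\|\leq\sqrt2Da_{k-1}/A_k$. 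This gives
\[
\frac{p!}{2}\leq M_p(\sqrt2D)^{p-1}\frac{a_{k-1}^p}{A_k^{p-1}}\leq C_pM_pD^{p-1}A_k,
\]
using $a_{k-1}\leq A_k$. Hence $1/A_k\leq C_pM_pD^{p-1}$, and Theorem~\ref{thm:aep} then gives $\|\oF(\vz_k)+\vv_k\|\leq C_pM_pD^p$.
\end{enumerate}

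\textbf{Step 4: combining the bounds.} By \eqref{eq:AEP_k+1/2}, $\|\vz_{k+\frac12}(a)-\vz_k\|\leq3D\frac{a}{A_k+a}\leq3D$. I bound the three terms of the Step 1 inequality separately.
\begin{enumerate}[(i)]
\item The first term is at most $3D$.
\item The second term is at most $a\|\oF(\vz_k)+\vv_k\|\leq C_pM_pD^p\bar a_k$, using $\frac{a^2}{A_k+a}\leq a$.
\item The third term is at most $\frac{M_pa}{p!}(3D)^p\leq C_pM_pD^p\bar a_k$.
\end{enumerate}
Summing gives \eqref{eq:tensor_uniform_trial_displacement}.

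The main obstacle is Step 3(ii). The lower bound on $A_k$ must be extracted from the acceptance window at the previous iteration. The power $p-1$ on the displacement has to be matched with the stability estimate so that the final bound stays linear in $M_pD^{p-1}\bar a_k$ rather than producing fractional powers. The estimate above does this using only $a_{k-1}\leq A_k$.
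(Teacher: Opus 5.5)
Your proposal is correct and follows the paper's proof essentially step for step. Both proofs use the monotonicity-based analogue of Lemma~\ref{lem:trial_displacement_bound} with the $p$th-order remainder, the separate $k=0$ comparison against $\vz^*$, the trajectory bounds transferred through the relative-error condition with $\rho=1/\sqrt{2}$, and the estimate $1/A_k\le \frac{2^{(p+1)/2}}{p!}M_p\|\vz_0-\vz^*\|^{p-1}$, which comes from the lower acceptance condition, the individual stability term, and $a_{k-1}\le A_k$.
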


\begin{lemma}[Lipschitz continuity of the trial displacement]
  \label{lem:tensor_trial_map_lipschitz}
  For every $k\geq1$, with $\vd_k(0)=0$ as in
  \eqref{eq:tensor_trial_displacement_map},
  \begin{equation}\label{eq:tensor_trial_map_lipschitz}
    \Lip(\vd_k;[0,\bar a_k])
    \leq C_p M_p\|\vz_0-\vz^*\|^p\left(
      1
      + M_p\|\vz_0-\vz^*\|^{p-1}\bar a_k
    \right)^{p-1}.
  \end{equation}
\end{lemma}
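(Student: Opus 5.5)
The plan is to follow the second-order template of Lemma~\ref{lem:trial_map_lipschitz}, replacing the affine-model estimates with tensor ones. Write $D:=\|\vz_0-\vz^*\|$ and $x_k:=M_pD^{p-1}\bar a_k$. The first ingredient is the lower bound on $A_k$. Iteration $k-1$ was accepted, so $M_pa_{k-1}\|\vd_{k-1}\|^{p-1}\ge p!/2$. The $(k-1)$th term of Lemma~\ref{lem:stability} gives $\|\vd_{k-1}\|\le \sqrt2 D a_{k-1}/A_k$. Together these yield $p!/2\le M_p a_{k-1}^p(\sqrt2 D/A_k)^{p-1}\le M_p(\sqrt2D)^{p-1}A_k$, and hence
\[
1/A_k\le C_pM_pD^{p-1}.
\]
This is the analogue of \eqref{eq:A_inverse_upper_bound}. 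The trajectory bounds of Lemma~\ref{lem:bounded_trajectory} carry over verbatim, because they only used Theorem~\ref{thm:aep}, stability, and the relative-error condition with $\rho=1/\sqrt2$. They give $\|\vz_k-\vz_0\|\le 3D$ and $\|\vz_0-\vz_k-A_k(\oF(\vz_k)+\vv_k)\|\le 3D$. Consequently $\|\vz_{k+\frac12}(a)-\vz_{k+\frac12}(b)\|\le 3D|\alpha(a)-\alpha(b)|\le 3D|a-b|/A_k\le C_pM_pD^p|a-b|$.

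Next, with $\vc(t)$ and $\alpha(t)$ as before, I reuse the proof of Lemma~\ref{lem:trial_sensitivity} unchanged. It used only the monotonicity of $\oP_{k,b}+\oH$, which holds here by Lemma~\ref{lem:regularized_taylor}. This gives
\[
\|\vz_{k+1}(a)-\vz_{k+1}(b)\|\le|\alpha(a)-\alpha(b)|\,3D+b\|\oP_{k,b}(\vw)-\oP_{k,a}(\vw)\|+\tfrac{|a-b|}{a}\|\vc(a)-\vz_{k+1}(a)\|,\qquad \vw:=\vz_{k+1}(a).
\]
The first term is already $C_pM_pD^p|a-b|$.

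For the third term, I redo Lemma~\ref{lem:trial_center_gap}. The identity $\vc(a)-\vz_{k+1}(a)=\frac{aA_k}{A_k+a}(\oF(\vz_k)+\vv_k)-\vd_k(a)$ still holds. The first piece is at most $2aD/A_k\le C_pM_pD^pa$. For $\|\vd_k(a)\|$, I use the tensor analogue of Lemma~\ref{lem:trial_displacement_bound}, whose proof is identical except that the Taylor remainder becomes $\frac{M_p}{p!}\|\vz_{k+\frac12}(a)-\vz_k\|^p$. Combined with $\|\vz_{k+\frac12}(a)-\vz_k\|\le 3D\,a/(A_k+a)$, this gives
\[
\|\vd_k(a)\|\le \frac{C_p aD}{A_k}+C_pM_paD^p\le C_pM_pD^pa .
\]
Hence the third term is at most $C_pM_pD^p|a-b|$.

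The main obstacle is the second term, the model-sensitivity bound replacing Lemma~\ref{lem:trial_model_sensitivity}. Set $\vy_a=\vz_{k+\frac12}(a)$, $\vy_b=\vz_{k+\frac12}(b)$ and $\delta:=\|\vy_a-\vy_b\|$. I would expand $\oT^{(p-1)}(\vw;\vy_b)-\oT^{(p-1)}(\vw;\vy_a)$ by Taylor-expanding each derivative $D^i\oF(\vy_b)$ around $\vy_a$. The standard estimate for Taylor polynomials of a function with $L_p$-Lipschitz $(p-1)$th derivative gives
\[
\|\oT^{(p-1)}(\vw;\vy_b)-\oT^{(p-1)}(\vw;\vy_a)\|\le C_pL_p\,\delta\,(\delta+\|\vw-\vy_a\|)^{p-1}.
\]
The regularizer $\frac{\lambda}{(p-1)!}\|\vw-\vy\|^{p-1}(\vw-\vy)$ is locally Lipschitz in $\vy$, with constant $C_pL_p(\delta+\|\vw-\vy_a\|)^{p-1}$. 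Multiplying by $b$ and using $b\delta\le 3D|a-b|$ (from $b|\alpha(a)-\alpha(b)|\le|a-b|$), $\delta\le 3D$, and $\|\vw-\vy_a\|=\|\vd_k(a)\|\le C_p(1+x_k)D$ (Lemma~\ref{lem:tensor_trial_displacement}), I get
\[
b\|\oP_{k,b}(\vw)-\oP_{k,a}(\vw)\|\le C_pM_pD^p(1+x_k)^{p-1}|a-b|.
\]
Summing the three terms bounds the variation of $\vz_{k+1}$. Adding the bound on $\|\vz_{k+\frac12}(a)-\vz_{k+\frac12}(b)\|$ through \eqref{eq:displacement_difference_reduction} proves \eqref{eq:tensor_trial_map_lipschitz} on $(0,\bar a_k]$. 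Finally, the bound $\|\vd_k(a)\|\le C_pM_pD^pa$ shows $\vd_k(a)\to0$ as $a\downarrow0$, which extends the estimate to $[0,\bar a_k]$.
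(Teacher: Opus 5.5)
Your proposal is correct and follows the paper's proof essentially step by step. The shared steps are the lower bound on $A_k$, the transferred trajectory bounds, the monotonicity-based comparison of Lemma~\ref{lem:trial_sensitivity}, the linear-in-$a$ bounds on $\|\vd_k(a)\|$ and $\|\vc(a)-\vz_{k+1}(a)\|$, and the center-sensitivity estimate for the regularized Taylor model combined with $b|\alpha(a)-\alpha(b)|\leq|a-b|$ and Lemma~\ref{lem:tensor_trial_displacement}. The only minor difference is how the center-sensitivity bound is justified: you re-expand each $D^i\oF(\vy_b)$ about $\vy_a$, so that the binomial identity for symmetric multilinear maps leaves only remainders each carrying a factor $\|\vy_a-\vy_b\|$, whereas the paper proves Lemma~\ref{lem:tensor_model_center_sensitivity} by differentiating the model in its center and telescoping; both give the same estimate.
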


\begin{proof}[Proof of Lemma~\ref{lem:tensor_merit_regular}]
  Set
  \[
    R:=\|\vz_0-\vz^*\|,
    \qquad
    x_k:=M_pR^{p-1}\bar a_k.
  \]
  We first consider $k=0$. In this case,
  $\vz_{1/2}(a)=\vz_0$, so the regularized Taylor model is independent of
  $a$. For $0<a\leq b$, monotonicity of this fixed model plus $\oH$, applied
  to the two trial inclusions, gives
  \[
    \|\vd_0(a)-\vd_0(b)\|
    \leq
    \frac{b-a}{b}\|\vd_0(b)\|.
  \]
  The mean-value theorem and the definition of $\psi_0$ therefore imply
  \begin{align*}
    |\psi_0(a)-\psi_0(b)|
    &\leq
    M_pa(p-1)
    \Bigl(
      \sup_{0<t\leq\bar a_0}\|\vd_0(t)\|
    \Bigr)^{p-2}
    \|\vd_0(a)-\vd_0(b)\|
    +M_p(b-a)
    \Bigl(
      \sup_{0<t\leq\bar a_0}\|\vd_0(t)\|
    \Bigr)^{p-1}
    \\
    &\leq
    pM_p(b-a)
    \Bigl(
      \sup_{0<t\leq\bar a_0}\|\vd_0(t)\|
    \Bigr)^{p-1}.
  \end{align*}
  The same conclusion holds when $a=0$ directly from $\psi_0(0)=0$.
  Lemma~\ref{lem:tensor_trial_displacement} now gives
  \[
    \Lip(\psi_0;[0,\bar a_0])
    \leq
    C_pM_pR^{p-1}(1+x_0)^{p-1}
    \leq
    C_pM_pR^{p-1}(1+x_0)^{2p-2}.
  \]

  Now suppose that $k\geq1$. Lemmas
  \ref{lem:tensor_trial_displacement} and
  \ref{lem:tensor_trial_map_lipschitz} give
  \begin{align*}
    \sup_{0\leq a\leq\bar a_k}\|\vd_k(a)\|
    &\leq C_pR(1+x_k),
    \\
    \Lip(\vd_k;[0,\bar a_k])
    &\leq C_pM_pR^p(1+x_k)^{p-1}.
  \end{align*}
  Substitution into~\eqref{eq:tensor_ls_sensitivity_reduction} yields
  \begin{align*}
    \Lip(\psi_k;[0,\bar a_k])
    &\leq
    C_pM_pR^{p-1}
    \left((1+x_k)^{p-1}+x_k(1+x_k)^{2p-3}\right)
    \\
    &\leq
    C_pM_pR^{p-1}(1+x_k)^{2p-2},
  \end{align*}
  which proves~\eqref{eq:tensor_merit_lipschitz} and the asserted
  continuity.

  It remains to bound the upper trial value $\bar a_k$. Theorem~\ref{thm:aep} gives
  $A_k\|\oF(\vz_k)+\vv_k\|\leq2R$ for $k\geq1$, and the same inequality
  is trivial for $k=0$. Hence the first term in
  \eqref{eq:tensor_ls_cap} is at most $4R/\varepsilon$. Multiplying the two
  terms in the maximum defining $\bar a_k$ by $M_pR^{p-1}$, using
  $M_p=(p+1)L_p$, and absorbing factors depending only on $p$ into $C_p$
  gives
  \[
    M_pR^{p-1}\bar a_k
    \leq
    C_p\max\left\{
      \frac{L_pR^p}{\varepsilon},
      \left(\frac{L_pR^p}{\varepsilon}\right)^{(p-1)/p}
    \right\}.
  \]
  This is~\eqref{eq:tensor_cap_upper_bound}.
\end{proof}

\subsection{Proof of Lemma~\ref{lem:tensor_trial_displacement}}
\label{appen:tensor_trial_displacement}
  Set $\vg_k:=\oF(\vz_k)+\vv_k$.
  We first record the higher-order analogue of
  Lemma~\ref{lem:trial_displacement_bound}. Note that in the proof of Lemma~\ref{lem:trial_displacement_bound}, except in the final step where we apply the Taylor remainder bound, the derivation only depends on the monotonicity of $\oP_{k,a}+\oH$. Hence, we also have 
  \begin{equation}
    \begin{aligned}
          \|\vd_k(a)\|
    &\leq \|\vz_{k+\frac12}(a)-\vz_k\|
    +\frac{a^2}{A_k+a}\|\vg_k\|
    +a\|\oP_{k,a}(\vz_k)-\oF(\vz_k) \| \\
    & \leq 
    \|\vz_{k+\frac12}(a)-\vz_k\|
    +\frac{a^2}{A_k+a}\|\vg_k\|
    +\frac{M_pa}{p!}
      \|\vz_{k+\frac12}(a)-\vz_k\|^p.
    \end{aligned} \label{eq:tensor_trial_displacement_bound}
  \end{equation}

  We shall also use the trajectory bounds in
  Lemma~\ref{lem:bounded_trajectory}. To justify their use here, observe
  that every accepted tensor step satisfies
  \[
    a_i\|\oF(\vz_{i+1})-\oP_i(\vz_{i+1})\|
    \leq
    \frac{M_pa_i}{p!}
      \|\vz_{i+1}-\vz_{i+\frac12}\|^p
    \leq
    \frac{1}{\sqrt{2}}
      \|\vz_{i+1}-\vz_{i+\frac12}\|,
  \]
  where the last inequality follows from the upper bound in
  \eqref{eq:tensor_acceptance_window}. Thus the accepted iterates form an
  AEP trajectory with $\rho=1/\sqrt{2}$, and the same proof as that of
  Lemma~\ref{lem:bounded_trajectory} gives
  \begin{equation}\label{eq:tensor_bounded_trajectory}
    \|\vz_0-\vz_k-A_k\vg_k\|\leq3\|\vz_0-\vz^*\|,
    \qquad
    \|\vz_k-\vz_0\|\leq3\|\vz_0-\vz^*\|.
  \end{equation}

  We now distinguish the initial iteration from the subsequent ones. If
  $k=0$, then $A_0=0$ and $\vz_{\frac12}(a)=\vz_0$. Choose
  $\vv^*\in\oH(\vz^*)$ such that
  $\oF(\vz^*)+\vv^*=0$. Comparing the trial inclusion at $\vz_1(a)$
  with the graph point
  $\oP_{0,a}(\vz^*)+\vv^*\in
  (\oP_{0,a}+\oH)(\vz^*)$ and using monotonicity gives
  \[
    \|\vz_1(a)-\vz^*\|
    \leq
    \|\vz_0-\vz^*\|+a\|\oP_{0,a}(\vz^*)-\oF(\vz^*)\|
    \leq
    \|\vz_0-\vz^*\|+\frac{M_pa}{p!}\|\vz_0-\vz^*\|^p.
  \]
  Consequently, for $0<a\leq\bar a_0$,
  \begin{equation*}
    \|\vd_0(a)\|
    \leq
    2\|\vz_0-\vz^*\|
    +\frac{M_p\bar a_0}{p!}\|\vz_0-\vz^*\|^p.
  \end{equation*}

  Now let $k\geq1$. 
  Since iteration $k-1$ was accepted, its lower acceptance condition and
  the individual $(k-1)$-th term in~\eqref{eq:aep_stability} imply
  \begin{align*}
    \frac{p!}{2}
    &\leq
    M_pa_{k-1}\|\vz_k-\vz_{k-\frac12}\|^{p-1}
    \leq
    2^{(p-1)/2}M_p\|\vz_0-\vz^*\|^{p-1}
    \frac{a_{k-1}^p}{A_k^{p-1}}
    \leq
    2^{(p-1)/2}M_p\|\vz_0-\vz^*\|^{p-1}A_k.
  \end{align*}
  It follows that
  \begin{equation}\label{eq:tensor_A_lower_bound}
    \frac{1}{A_k}
    \leq
    \frac{2^{(p+1)/2}}{p!}M_p\|\vz_0-\vz^*\|^{p-1}.
  \end{equation}
  Together with Theorem~\ref{thm:aep}, this yields
  \begin{equation}\label{eq:tensor_outer_residual_uniform}
    \|\vg_k\|
    \leq
    \frac{2\|\vz_0-\vz^*\|}{A_k}
    \leq
    \frac{2^{(p+3)/2}}{p!}M_p\|\vz_0-\vz^*\|^p.
  \end{equation}
  Moreover,~\eqref{eq:AEP_k+1/2} and
  \eqref{eq:tensor_bounded_trajectory} give
  \[
    \|\vz_{k+\frac12}(a)-\vz_k\|
    =
    \frac{a}{A_k+a}\|\vz_0-\vz_k-A_k\vg_k\|
    \leq
    3\|\vz_0-\vz^*\|\frac{a}{A_k+a}.
  \]
  Substituting the last two bounds into
  \eqref{eq:tensor_trial_displacement_bound}, and using
  $a^2/(A_k+a)\leq a$ and $a/(A_k+a)\leq1$, gives, for
  $0<a\leq\bar a_k$,
  \begin{equation*}
    \|\vd_k(a)\|
    \leq
    \left[
      3+\frac{2^{(p+3)/2}+3^p}{p!}
        M_p\|\vz_0-\vz^*\|^{p-1}\bar a_k
    \right]\|\vz_0-\vz^*\|.
  \end{equation*}
  The two cases imply~\eqref{eq:tensor_uniform_trial_displacement}, for
  example with
  \[
    C_p:=3+\frac{2^{(p+3)/2}+3^p}{p!}.
  \]

\subsection{Proof of Lemma~\ref{lem:tensor_trial_map_lipschitz}}
\label{appen:tensor_trial_map_lipschitz}

We first present the following lemma, which bounds the difference of the regularized Taylor model when changing its center.

\begin{lemma}
  \label{lem:tensor_model_center_sensitivity}
  For $\vy,\vy',\vw\in\reals^d$, there is a constant $C_p>0$, depending
  only on $p$, such that
  \begin{equation}\label{eq:tensor_model_center_sensitivity}
    \left\|
      \oT_{\lambda}^{(p-1)}(\vw;\vy)
      -\oT_{\lambda}^{(p-1)}(\vw;\vy')
    \right\|
    \leq
    C_pM_p\|\vy-\vy'\|
    \left(
      \|\vw-\vy\|+\|\vy-\vy'\|
    \right)^{p-1}.
  \end{equation}
\end{lemma}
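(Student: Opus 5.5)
We split $\oT_{\lambda}^{(p-1)}(\vw;\vy)=\oT^{(p-1)}(\vw;\vy)+\frac{\lambda}{(p-1)!}\|\vw-\vy\|^{p-1}(\vw-\vy)$ into the Taylor part and the regularization part and bound the change of each when the center moves from $\vy'$ to $\vy$. Throughout, write $h:=\|\vy-\vy'\|$ and $r:=\|\vw-\vy\|$. By the triangle inequality, $\|\vw-\vy'\|\leq r+h$, so every distance that appears is at most $r+h$.

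\textbf{Taylor part.} Insert $\oF(\vw)$ and write
\[
\oT^{(p-1)}(\vw;\vy)-\oT^{(p-1)}(\vw;\vy') = \bigl(\oT^{(p-1)}(\vw;\vy)-\oF(\vw)\bigr) - \bigl(\oT^{(p-1)}(\vw;\vy')-\oF(\vw)\bigr).
\]
The two pieces are bounded by \eqref{eq:pth_unregularized_taylor_remainder} as $\frac{L_p}{p!}r^p$ and $\frac{L_p}{p!}(r+h)^p$. These bounds do not carry a factor of $h$, so the crude split fails when $h\ll r$. The sharper route is to view $\oT^{(p-1)}(\vw;\cdot)$ as a function of the center. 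Its derivative in the center direction telescopes exactly:
\[
\frac{d}{dt}\,\oT^{(p-1)}(\vw;\vy_t)=\frac{1}{(p-1)!}\,D^{p}\oF(\vy_t)[\vw-\vy_t]^{p-1}[\dot\vy_t].
\]
Only the top-order term survives, which is the standard fact that the Taylor polynomial's center-derivative equals the last remainder term. Here $D^p\oF$ exists almost everywhere with operator norm at most $L_p$, by Assumption~\ref{assum:Lips_higher_derivative} and Rademacher's theorem.

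To avoid the a.e. technicality, I will do the same computation in difference form instead. Expand $D^i\oF(\vy)$ around $\vy'$ with its own Taylor remainder:
\[
D^i\oF(\vy)=\sum_{j=0}^{p-1-i}\frac{1}{j!}D^{i+j}\oF(\vy')[\vy-\vy']^j+E_i,\qquad \|E_i\|\leq \frac{L_p}{(p-i)!}h^{p-i}.
\]
Substitute this into $\oT^{(p-1)}(\vw;\vy)$. The binomial identity $\sum_{i+j=m}\frac{1}{i!j!}[\vw-\vy]^i[\vy-\vy']^j=\frac{1}{m!}[\vw-\vy']^m$ then shows that the main terms reassemble into exactly $\oT^{(p-1)}(\vw;\vy')$. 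What remains is $\sum_{i}\frac{1}{i!}E_i[\vw-\vy]^i$, which is bounded by $\sum_i \frac{L_p}{i!(p-i)!}h^{p-i}r^i$. Every term has $p-i\geq1$, so the sum is at most $C_pL_p\,h\,(r+h)^{p-1}$.

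\textbf{Regularization part.} The map $\vu\mapsto\|\vu\|^{p-1}\vu$ has derivative norm at most $p\|\vu\|^{p-1}$. By the mean value inequality along the segment from $\vw-\vy'$ to $\vw-\vy$, this part changes by at most $\frac{\lambda p}{(p-1)!}\,h\,(r+h)^{p-1}$.

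\textbf{Combining.} Adding the two parts and using $L_p,\lambda\leq M_p$ gives \eqref{eq:tensor_model_center_sensitivity}.

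The main obstacle is the combinatorial reassembly in the Taylor part: checking that the double sum collapses to $\oT^{(p-1)}(\vw;\vy')$ and that no term without a factor of $h$ survives. I expect the binomial-type identity for symmetric multilinear forms to handle this cleanly.
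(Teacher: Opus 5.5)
Your proof is correct, but it handles the Taylor part differently from the paper.

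The paper uses the center-derivative computation you sketched and then set aside. It differentiates $t\mapsto\oT^{(p-1)}(\vw;\vy+t(\vy'-\vy))$ and uses symmetry of the $D^j\oF$ to show that the chain-rule terms telescope to the single top-order term. That term involves the $t$-derivative of $D^{p-1}\oF(\vy+t(\vy'-\vy))$. This map is Lipschitz, hence absolutely continuous, so its derivative exists for almost every $t$ with operator norm at most $L_p\|\vy-\vy'\|$. The paper then integrates over $[0,1]$.

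Your difference form replaces this calculus with algebra. You re-expand each coefficient $D^i\oF(\vy)$ about $\vy'$ and reassemble the main terms with the binomial identity for the symmetric forms $D^m\oF(\vy')$. The only analytic input is the operator-norm Taylor remainder for the multilinear-valued maps $D^i\oF$, which follows directly from Assumption~\ref{assum:Lips_higher_derivative}. Nothing is lost in the constant: your leftover sum equals $\frac{L_p}{p!}\bigl((r+h)^p-r^p\bigr)\leq\frac{L_p}{(p-1)!}h(r+h)^{p-1}$, which is the same bound the paper gets for the unregularized Taylor part.

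The paper's route is shorter and gives an integral representation of the center sensitivity. Yours is more elementary and avoids any differentiability question. On that point, your opening appeal to Rademacher's theorem would not by itself justify the derivative formula along a segment, because a segment is a null set in $\reals^d$. That is why the paper works with the one-dimensional derivative along the segment.

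For the regularization term the two arguments are essentially the same. Your mean-value bound via $\max\{\|\vw-\vy\|,\|\vw-\vy'\|\}\leq r+h$ even avoids the factor $2^{p-1}$ the paper incurs from $\|\vq\|+\|\vq'\|\leq 2(r+h)$, although that factor is absorbed into $C_p$ anyway.
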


\begin{proof}
  Set
  \[
    \Delta:=\vy'-\vy,
    \qquad
    \vr_t:=\vw-\vy-t\Delta,
    \qquad 0\leq t\leq1.
  \]
   In the following, we will show that, at almost every $t$,
  \begin{equation}\label{eq:tensor_taylor_center_derivative}
    \frac{d}{dt}\oT^{(p-1)}(\vw;\vy+t\Delta)
    =
    \frac{1}{(p-1)!}\oG_t[\vr_t^{p-1}].
  \end{equation}
  The zeroth Taylor term satisfies
  \[
    \frac{d}{dt}\oF(\vy+t\Delta)
    =D\oF(\vy+t\Delta)[\Delta].
  \]
  Let $\vr_t^j$ denote $j$ copies of
  $\vr_t$.
  For $j=1,\ldots,p-2$, by chain rule, differentiating
  the $j$th Taylor term gives
  \begin{align*}
    \frac{d}{dt}
    \left(
      \frac{1}{j!}D^j\oF(\vy+t\Delta)[\vr_t^j]
    \right)
    &=
    \frac{1}{j!}D^{j+1}\oF(\vy+t\Delta)[\Delta,\vr_t^j]
    \\
    &\quad
    +\frac{1}{j!}\sum_{\ell=1}^j
      D^j\oF(\vy+t\Delta)
      [\vr_t^{\ell-1},-\Delta,\vr_t^{j-\ell}]
    \\
    &=
    \frac{1}{j!}D^{j+1}\oF(\vy+t\Delta)[\Delta,\vr_t^j]
    -\frac{1}{(j-1)!}
      D^j\oF(\vy+t\Delta)[\Delta,\vr_t^{j-1}].
  \end{align*}
  The last equality uses the fact that \(D^j\oF(\vy+t\Delta)\) is a symmetric \(j\)-linear map. Therefore, it is invariant under permutations of its \(j\) arguments.
 For the highest-order Taylor term, note that the map $t\mapsto D^{p-1}\oF(\vy+t\Delta)$ is
  $L_p\|\Delta\|$-Lipschitz and hence absolutely continuous. Therefore,
  its derivative
  \[
    \oG_t:=\frac{d}{dt}D^{p-1}\oF(\vy+t\Delta)
  \]
  exists for almost every $t$ and satisfies
  $\|\oG_t\|_{\op}\leq L_p\|\Delta\|$. Hence,  
  \begin{align*}
    &\frac{d}{dt}
    \left(
      \frac{1}{(p-1)!}
      D^{p-1}\oF(\vy+t\Delta)[\vr_t^{p-1}]
    \right)
    \\
    &\qquad=
    \frac{1}{(p-1)!}\oG_t[\vr_t^{p-1}]
    -\frac{1}{(p-2)!}
      D^{p-1}\oF(\vy+t\Delta)[\Delta,\vr_t^{p-2}].
  \end{align*}
  Summing these identities, for each $j=1,\ldots,p-1$ the negative
  term involving $D^j\oF(\vy+t\Delta)$ cancels the positive term obtained from
  differentiating the $(j-1)$th Taylor coefficient. Consequently, we have~\eqref{eq:tensor_taylor_center_derivative}. 
  Integrating from $0$ to $1$ and using
  $\|\vr_t\|\leq\|\vw-\vy\|+\|\Delta\|$ yields
  \begin{align}
    \|\oT^{(p-1)}(\vw;\vy')
      -\oT^{(p-1)}(\vw;\vy)\|
    &\leq
    \frac{L_p}{(p-1)!}\|\Delta\|
    \int_0^1\|\vr_t\|^{p-1}\,dt
    \notag\\
    &\leq
    \frac{L_p}{(p-1)!}\|\vy'-\vy\|
    \left(
      \|\vw-\vy\|+\|\vy'-\vy\|
    \right)^{p-1}.
    \label{eq:tensor_unregularized_center_sensitivity}
  \end{align}
  Moreover, the map $\vq\mapsto\|\vq\|^{p-1}\vq$ satisfies
  \[
    \bigl\|\|\vq\|^{p-1}\vq
      -\|\vq'\|^{p-1}\vq'\bigr\|
    \leq
    p\bigl(\|\vq\|+\|\vq'\|\bigr)^{p-1}
    \|\vq-\vq'\|.
  \]
  Apply the latter inequality with $\vq=\vw-\vy$ and
  $\vq'=\vw-\vy'$. Since
  \[
    \|\vq-\vq'\|=\|\vy-\vy'\|,
    \qquad
    \|\vq\|+\|\vq'\|
    \leq2\left(\|\vw-\vy\|+\|\vy-\vy'\|\right),
  \]
  multiplying by $\lambda/(p-1)!$ and combining with
  \eqref{eq:tensor_unregularized_center_sensitivity} proves
  \eqref{eq:tensor_model_center_sensitivity}, because
  $\lambda=L_p\leq M_p$.
\end{proof}

\begin{proof}[Proof of Lemma~\ref{lem:tensor_trial_map_lipschitz}]
  For $t>0$, define the trial weight and center
  \[
    \alpha(t):=\frac{t}{A_k+t},
    \qquad
    \vc(t):=\vz_k+\alpha(t)(\vz_0-\vz_k).
  \]
  Equation~\eqref{eq:AEP_k+1/2} gives
  \begin{equation}\label{eq:tensor_trial_predictor_formula}
    \vz_{k+\frac12}(t)
    =
    \vz_k+\alpha(t)
    \bigl(
      \vz_0-\vz_k-A_k(\oF(\vz_k)+\vv_k)
    \bigr).
  \end{equation}
  Hence~\eqref{eq:tensor_bounded_trajectory} implies
  \begin{equation}\label{eq:tensor_trial_predictor_sensitivity}
    \|\vz_{k+\frac12}(a)-\vz_{k+\frac12}(b)\|
    \leq
    3\|\vz_0-\vz^*\||\alpha(a)-\alpha(b)|.
  \end{equation}
  We shall use the two elementary estimates
  \begin{equation}\label{eq:tensor_trial_weight_sensitivity}
    |\alpha(a)-\alpha(b)|
    \leq
    \frac{|a-b|}{A_k}
    \leq
    \frac{2^{(p+1)/2}}{p!}
    M_p\|\vz_0-\vz^*\|^{p-1}|a-b|,
    \qquad
    b|\alpha(a)-\alpha(b)|\leq|a-b|,
  \end{equation}
  where the second inequality in the first bound follows from
  \eqref{eq:tensor_A_lower_bound}.

  As in the second-order argument, we first derive an alternative bound on the
  trial displacement $\vd_k$. Combining
  \eqref{eq:tensor_trial_displacement_bound},
  \eqref{eq:tensor_A_lower_bound},
  \eqref{eq:tensor_outer_residual_uniform}, and
  \[
    \|\vz_{k+\frac12}(a)-\vz_k\|
    \leq3\|\vz_0-\vz^*\|\frac{a}{A_k+a}
  \]
  gives
  \begin{equation}\label{eq:tensor_trial_displacement_parameter_bound}
    \|\vd_k(a)\|
    \leq C_pM_p\|\vz_0-\vz^*\|^pa.
  \end{equation}
  Moreover, since
  \[
    \vc(a)-\vz_{k+\frac12}(a)
    =\frac{aA_k}{A_k+a}(\oF(\vz_k)+\vv_k),
  \]
  the last estimate and Theorem~\ref{thm:aep}, together with
  \eqref{eq:tensor_A_lower_bound}, imply
  \begin{equation}\label{eq:tensor_trial_center_gap}
    \|\vc(a)-\vz_{k+1}(a)\|
    \leq
    \|\vc(a)-\vz_{k+\frac12}(a)\|+\|\vd_k(a)\|
    \leq C_pM_p\|\vz_0-\vz^*\|^pa.
  \end{equation}

  We now compare two trial points. The proof of
  Lemma~\ref{lem:trial_sensitivity} uses only monotonicity of $\oP_{k,a}+\oH$ and therefore applies here, giving
  \begin{align}
    \|\vz_{k+1}(a)-\vz_{k+1}(b)\|
    &\leq
    |\alpha(a)-\alpha(b)|\|\vz_0-\vz_k\|
    +b\|\oP_{k,b}(\vz_{k+1}(a))
      -\oP_{k,a}(\vz_{k+1}(a))\|
    \notag\\
    &\quad
    +\frac{|a-b|}{a}\|\vc(a)-\vz_{k+1}(a)\|.
    \label{eq:tensor_two_trial_sensitivity}
  \end{align}
  Apply Lemma~\ref{lem:tensor_model_center_sensitivity} with
  $\vy=\vz_{k+\frac12}(a)$,
  $\vy'=\vz_{k+\frac12}(b)$, and $\vw=\vz_{k+1}(a)$. By
  the definition of $\vd_k(a)$, this gives
  \begin{align*}
    &b\|\oP_{k,b}(\vz_{k+1}(a))
      -\oP_{k,a}(\vz_{k+1}(a))\|
    \\
    &\qquad\leq
    C_pM_pb
    \|\vz_{k+\frac12}(a)-\vz_{k+\frac12}(b)\|
    \Bigl(
      \|\vd_k(a)\|
      +\|\vz_{k+\frac12}(a)-\vz_{k+\frac12}(b)\|
    \Bigr)^{p-1}.
  \end{align*}
  The predictor estimate~\eqref{eq:tensor_trial_predictor_sensitivity}
  implies
  \[
    \|\vz_{k+\frac12}(a)-\vz_{k+\frac12}(b)\|
    \leq
    3\|\vz_0-\vz^*\||\alpha(a)-\alpha(b)|.
  \]
  Moreover, since $|\alpha(a)-\alpha(b)|\leq1$,
  Lemma~\ref{lem:tensor_trial_displacement} gives
  \begin{align*}
    \|\vd_k(a)\|
      +\|\vz_{k+\frac12}(a)-\vz_{k+\frac12}(b)\|\leq
    C_p\|\vz_0-\vz^*\|
    \left(
      1+M_p\|\vz_0-\vz^*\|^{p-1}\bar a_k
    \right).
  \end{align*}
  Combining the last three displays yields
  \begin{align*}
    &b\|\oP_{k,b}(\vz_{k+1}(a))
      -\oP_{k,a}(\vz_{k+1}(a))\|
    \\
    &\qquad\leq
    C_pM_p\|\vz_0-\vz^*\|^p
    \left(
      1+M_p\|\vz_0-\vz^*\|^{p-1}\bar a_k
    \right)^{p-1}
    b|\alpha(a)-\alpha(b)|
    \\
    &\qquad\leq
    C_pM_p\|\vz_0-\vz^*\|^p
    \left(
      1+M_p\|\vz_0-\vz^*\|^{p-1}\bar a_k
    \right)^{p-1}
    |a-b|.
  \end{align*}
  The final inequality uses
  $b|\alpha(a)-\alpha(b)|\leq|a-b|$ from
  \eqref{eq:tensor_trial_weight_sensitivity}.
  Substituting this estimate,
  \eqref{eq:tensor_trial_weight_sensitivity},
  \eqref{eq:tensor_bounded_trajectory}, and
  \eqref{eq:tensor_trial_center_gap} into
  \eqref{eq:tensor_two_trial_sensitivity} gives
  \[
    \|\vz_{k+1}(a)-\vz_{k+1}(b)\|
    \leq
    C_pM_p\|\vz_0-\vz^*\|^p
    \left(
      1+M_p\|\vz_0-\vz^*\|^{p-1}\bar a_k
    \right)^{p-1}
    |a-b|.
  \]
  The predictor estimate
  \eqref{eq:tensor_trial_predictor_sensitivity} and
  \eqref{eq:tensor_trial_weight_sensitivity} similarly give
  \[
    \|\vz_{k+\frac12}(a)-\vz_{k+\frac12}(b)\|
    \leq C_pM_p\|\vz_0-\vz^*\|^p|a-b|.
  \]
  Hence
  \[
    \|\vd_k(a)-\vd_k(b)\|
    \leq
    C_pM_p\|\vz_0-\vz^*\|^p
    \left(
      1+M_p\|\vz_0-\vz^*\|^{p-1}\bar a_k
    \right)^{p-1}
    |a-b|,
    \qquad a,b>0.
  \]
  Finally,~\eqref{eq:tensor_trial_displacement_parameter_bound} implies
  $\vd_k(a)\to0$ as $a\downarrow0$. Since $\vd_k(0)=0$, the same bound
  extends to $[0,\bar a_k]$ and proves
  \eqref{eq:tensor_trial_map_lipschitz}.
\end{proof}

\newpage

\printbibliography
\end{document}